% This is a general template file for the LaTeX package SVJour3
% for Springer journals. Original by Springer Heidelberg, 2010/09/16
%
% Use it as the basis for your article. Delete % signs as needed.
%
% This template includes a few options for different layouts and
% content for various journals. Please consult a previous issue of
% your journal as needed.
%

\RequirePackage{fix-cm}
\documentclass{svjour3}                     % onecolumn (standard format)
\smartqed  % flush right qed marks, e.g. at end of proof
\usepackage[margin=1in]{geometry}
\usepackage{graphicx}
\usepackage{amsmath} 
\usepackage{url}
\usepackage{authblk} 
\usepackage{amssymb}
\usepackage{amsfonts}
\usepackage{verbatim}
\usepackage{mathrsfs}
\usepackage{subfigure}
\usepackage{mathtools}
\usepackage{nicefrac}
\usepackage{xcolor}

\usepackage{array} % for newcolumn macro

%
% insert here the call for the packages your document requires
%\usepackage{mathptmx}      % use Times fonts if available on your TeX system
%\usepackage{latexsym}
% etc.
\newcommand{\im}{\mathrm{i}}
\newcommand{\rr}{\mathbb{R}}

\newcommand{\Lc}{\mathcal{L}}

\usepackage{todonotes}

\newcolumntype{L}{>{$}l<{$}} % math-mode version of "l" column type

%
% Insert the name of "your journal" with
% \journalname{Celestial Mechanics and Dynamical Astronomy}
%
\begin{document}

\title{Homoclinic dynamics in a spatial restricted four body problem
\thanks{The second author was partially supported by NSF grant 
DMS-1813501.
Both authors were partially supported by NSF grant DMS-1700154 
and by  the Alfred P. Sloan Foundation grant G-2016-7320}
}
% Grants or other notes about the article that should go on the front
% page should be placed within the \thanks{} command in the title
% (and the %-sign in front of \thanks{} should be deleted)
%
% General acknowledgments should be placed at the end of the article.

\subtitle{blue skies into Smale horseshoes for vertical Lyapunov families}

%\titlerunning{Short form of title}        % if too long for running head

\author{Maxime Murray    \and
        J.D. Mireles James %etc.
}

%\authorrunning{Short form of author list} % if too long for running head

\institute{M. Murray \at
              Florida Atlantic University, Department of Mathematical Sciences \\
              \email{mmurray2016@fau.edu}           %  \\
%             \emph{Present address:} of F. Author  %  if needed
           \and
           J.D. Mireles James \at
              Florida Atlantic University, Department of Mathematical Sciences \\
              \email{jmirelesjames@fau.edu}
}

\date{Received: date / Accepted: date}
% The correct dates will be entered by the editor

\maketitle

\begin{abstract}
The set of transverse homoclinic intersections for a 
 saddle-focus equilibrium in the 
planar equilateral restricted four body problem admit  
certain simple homoclinic oribts which form 
the skeleton of the complete homoclinic  
intersection -- or homoclinic web.  
In the present work the planar restricted four body problem 
is viewed as an invariant subsystem of the spatial problem, 
and the influence of this planar homoclinic skeleton on the spatial 
dynamics is studied from a numerical point of view.  
Starting from the vertical Lyapunov families
emanating from saddle focus equilibria,
we compute the stable/unstable manifolds of these spatial 
periodic orbits and look for intersections 
between these manifolds near the fundamental planar homoclinics.
In this way we are able to continue all of the basic planar 
homoclinic motions into the spatial problem as 
homoclinics for appropriate vertical Lyapunov orbits which, 
by the Smale Tangle theorem, suggest the existence of
chaotic motions in the spatial problem.
While the saddle-focus equilibrium solutions in the planar problems occur only at 
a discrete set of energy levels, the cycle-to-cycle homoclinics in the spatial problem are
robust with respect to small changes in energy.    
\keywords{Gravitational $4$- body problem  \and blue sky catastrophes
\and Smale tangle \and vertical Lyapunov families \and invariant manifolds
\and boundary value problems}
\PACS{45.50.Jf	 \and 45.50.Pk \and 45.10.-b \and 02.60.Lj \and 05.45.Ac}
\subclass{70K44 \and 34C45 \and 70F15}
\end{abstract}

\tableofcontents

%==================================================================
%   INTRO
%==================================================================

\newpage

\section{Introduction} \label{intro}
Connecting orbits occupy a central place in the qualitative
theory of Hamiltonian systems going back to the 
groundbreaking work of Poincar\'{e} at the dawn of the 
Twentieth Century \cite{MR1194624,MR1194623,MR1194622}.
Indeed Poincar\'{e}'s argument that the circular restricted three body problem 
(CRTBP) is not integrable relies crucially on the existence of a transverse 
cycle-to-cycle homoclinic -- that is, an orbit which limits in both forward and 
backward time to a periodic solution. 
Such an orbit is necessarily in the intersection of the stable and unstable 
manifolds of the periodic solution, a fact which lends the discussion 
its distinctively geometric character.   
The interested reader is referred to the lecture notes of Chenciner 
\cite{MR3329413} for a modern discussion of the theoretical and 
historical role of invariant manifolds and connecting orbits in
Poincar\'{e}'s work on the three body problem. 
In more recent times it has been shown 
that the existence of transverse homoclinic 
orbits/heteroclinic cycles implies the existence of chaotic 
motions quite generally, via the mechanism of Smale
\cite{MR0228014}.  The Lectures of Siegel and Moser
 \cite{MR1829194,MR1345153} 
provide a classic reference on chaotic motions in celestial mechanics.

Inspired by the work of Poincar\'{e}, a number of
of early Twentieth Century dynamical astronomers --
in particular the groups led by  
Darwin, Str\"{o}mgren, and Moulton  --  
conducted extensive numerical studies 
which explored the phase space structure
of the CRTBP \cite{MR1554890,stromgrenRef,moultonBook}.  
These researchers were especially interested in one parameter
families of periodic orbits (``tubes'' parameterized by energy) and 
developed numerical continuation methods to study the
global embeddings of these tubes.  
This work first suggested the importance of 
saddle-focus libration points, as it was observed 
that some families of periodic solutions appear to
accumulate to an asymptotic cycle --
what would be called in modern language a homoclinic orbit --
for a saddle-focus libration point.   This work provided numerical 
evidence for the existence of families of periodic
orbits in the three body problem which remain bounded in 
amplitude but nevertheless have period tending to infinity, 
foreshadowing the canonical work of Chazy in 1922 on the final motions of 
three body orbits \cite{MR1509241}.

The advent of digital computing in the mid Twentieth Century 
facilitated the more detailed numerical studies of Szebehely, Nacozy, and Flandern
\cite{szebehelyOnStromgren,szebehelyTriangularPoints}. 
A key observation to emerge from this work was 
that the tubes of periodic orbits mentioned at the end of the previous paragraph
appeared to change stability infinitely many times
while approaching the homoclinic.  
This result suggested complicated dynamics near the homoclinic,
anticipating the period doubling cascades of Feigenbaum.
The interested reader is referred to the work of 
Pinotsis \cite{MR875720,MR742168,MR2726383}, 
as well as the work of Henrard and Navarro \cite{MR2104214,MR1956529}
and the references therein for more complete discussion.

These developments culminated in 1973 with Henrard's proof of a 
theorem which unified the nearly one hundred years of 
numerical experiments sketched above, finally settling a conjecture of Str\"{o}mgren
about tubes of periodic orbits. 
More precisely, Henrard showed that the existence of a transverse homoclinic for a
saddle-focus equilibrium in a two degrees of freedom Hamiltonian system implies the existence of a  
tube of periodic orbits parameterized by energy accumulating to the homoclinic \cite{MR0365628}. 
Moreover the result established that as the period of the orbits goes to infinity,
their stability does indeed change infinitely many times as earlier numerical work suggested.  
This phenomena -- the so called \textit{blue sky catastrophe} in the terminology of Abraham
\cite{MR813508} -- is studied by a number of authors including 
L.P. Shilnikov, A.L. Shilnikov, and Turaev \cite{MR3253906},  and Devaney \cite{MR0431274}. Indeed
Devaney's 1976 work established that the hypotheses of Henrard's theorem imply also the existence 
of  chaotic motions in the energy level of the saddle-focus equilibrium \cite{MR0442990}.  
We refer to the works of Lerman \cite{MR998368,MR1135905} for other theoretical results and 
discussion, and to the numerical study G\'{o}mez, L\'{i}bre, and Masdemont in 
\cite{MR993815} which illuminates saddle-focus homoclinic dynamics 
associated with the $\mathcal{L}_{4,5}$ libration points in the CRTBP.

Theorems like the ones mentioned in the previous paragraph are
Hamiltonian versions of the homoclinic bifurcations studied by Shil\'{n}ikov 
\cite{MR0259275,0025-5734-10-1-A07,MR0210987}, and 
taken together paint a vivid picture of the rich dynamics 
near a transverse homoclinic connection for a saddle-focus equilibrium
in a two degree of freedom Hamiltonian system.
A natural follow up question is \textit{what, if anything, do the results about two freedom
systems just described tell us about Hamiltonian systems with three or more degrees 
of freedom?}  The question is reasonable as 
many problems in celestial mechanics have an invariant planar
subsystem due to the conservation of angular momentum.  
%In the spatial CRTBP for example the orbit of a point whose 
%initial position and velocity lie in the plane of the primaries 
%moves ever in that plane.
 
The present work considers this question in the context of  
a spatial equilateral restricted four body problem, hereafter referred to 
as the circular restricted four body problem (CRFBP).  
The equations of motion, as well as some history and basic properties 
of the problem are reviewed in Section \ref{sec:CRFBP}.  The problem is 
an excellent candidate for the present study as the homoclinic 
dynamics in the invariant planar subsystem have recently been studied in some detail.  
In particular, the work of Shane Kepley and the second author \cite{MR3919451} 
provides a detailed numerical study of blue sky catastrophes in the case of equal masses. 
The main observation  is that the saddle-focus homoclinics 
appear to be organized by a small number of simple connections, 
or homoclinic channels.  In fact these channels are just the 
``shortest'' homoclinics (see \cite{MR3919451} for the precise meaning of 
shortest in this context), and there turn out to be six of them 
at each saddle-focus equilibrium in the CRFBP.  
If one considers these six shortest homoclinic connections as the letters of a symbolic
alphabet, then all the other homoclinic connections -- of which there 
appear to be infinitely many -- organize themselves into ``words'' in this alphabet. 
In short the \textit{homoclinic web} at any saddle focus in the CRFBP
appears to be organized by six fondamental motions.  
The results of \cite{MR3919451} are reviewed in  Section \ref{sec:Planar}.  

When the planar CRFBP is viewed as a subsystem of the spatial CRFBP the spectrum of
a libration point picks up an additional center direction, and there is an out of plane family of 
periodic orbits associated with each of the planar libration points.
These are the so called \textit{vertical 
Lyapunov families} and they inherit the stability of the planar librations. 
 We are particularly interested in the vertical families associated with the 
saddle-focus equilibrium solutions, where
the stable/unstable manifolds of the vertical periodic orbits are three dimensional
with complex conjugate stable/unstable Floquet multipliers.

The system conserves the so called \textit{Jacobi integral}, so that any fixed vertical 
Lyapunov orbit and its attached three dimensional stable/unstable manifolds live in a five 
dimensional level set, and the dimension count allows for the possibility of 
transverse intersections between the stable/unstable manifolds relative 
to the integral manifold.  If such an intersection actually occurs, it follows from the 
Smale tangle theorem \cite{MR0228014} that there is a chaotic subsystem 
near the connecting orbit.  

The present work provides compelling numerical evidence in support of the claim that
the planar homoclinic orbits studied in \cite{MR3919451} give rise to transverse homoclinic 
orbits, and hence Smale tangles, for the corresponding vertical Lyapunov families in the 
spatial CRFBP. 
That is, we find a six letter homoclinic alphabet for the spatial cycle-to-cycle connections,
inherited from the planar problem.  The out of plane connections 
appear to persist for fairly large out of plane amplitudes.

In all of our computations we utilize the \textit{parameterization method}
 to approximate the stable/unstable manifolds of the 
Lyapunov orbits in large region surrounding the periodic orbit.  
The parameterization 
method is reviewed in Section \ref{sec:parameterizationMethod}, in particular a number of 
references to the literature  are given there.
Connecting orbits are then located as solutions of two point boundary 
value problems with boundary conditions projected onto the parameterization
of the local stable/unstable manifolds.
The virtue of using the parameterization method in the present context 
is that it stabilizes the numerics, leading to a better
condition number in the two point boundary value problem.  
This is especially valuable when, as in the present work, we are trying to find
many connections as the same local parameterizations can be used to formulate
the BVPs for all the connecting orbits.
 
The remainder of the paper is organized as follows.
In Section \ref{sec:CRFBP} we review the CRFBP and 
discuss some basic results from the literature.  
In particular we review the findings of  \cite{MR3919451} on 
homoclinic channels in the planar problem and also introduce the 
vertical Lyapunov families which are the main objects of the present 
study.  In Section \ref{sec:parameterizationMethod} we review the 
parameterization method and derive the homological equations
which determine the Fourier-Taylor coefficients of the local 
invariant manifold approximations and discuss our implementations.
Section \ref{sec:connections} describes briefly the formulation of the two point 
boundary value problems for cycle-to-cycle connections, and in Section    
\ref{sec:results} we present the main results of the paper -- numerical 
calculations of the homoclinic connections for the the vertical Lyapunov
family in the CRFBP.  In Section    
\ref{sec:conclusions} we summarize our conclusions.
We provide two appendices.  One is Section \ref{sec:autoDiff}
describing the ``automatic differentiation''
framework which reduces the problem to polynomial, 
and the other is Section \ref{sec:orbitData} which tabulates
for the sake of reproducibility
some of the data produced in the present study.

%==================================================================
%   THE CRFBP
%==================================================================	
\section{The restricted four body problem} \label{sec:CRFBP}
In this section, we introduce the particular version of the 
four body problem studied in the present work.  
We postpone to Section \ref{eq:CRFBP_equationsOfMotion}
discussion of the equations of motion, and 
give first a brief overview of the literature surrounding the problem, 
which originates from the work of Pedersen \cite{pedersen1,pedersen2}.
Detailed numerical studies of the equilibrium set as well as
the planar and spatial Hill's regions are found in the works of
Sim\'{o} \cite{MR510556},
 Baltagiannis and Papadakis \cite{MR2845212}, and
\'{A}lvarez-Ram\'{i}erz and Vidal \cite{MR2596303}.
Mathematically rigorous theorems about the equilibrium set and 
its bifurcations are  proven with computer assistance by Leandro and Barros in 
\cite{MR2232439,MR2784870,MR3176322}.
The papers just cited establish that for any value of the primary masses there are 
always either 8, 9, or 10 equilibrium solutions (or libration points) with 6 outside the 
equilateral triangle formed by the primary bodies
 (see Figure \ref{fig:rotatingframe}).

Periodic orbits are studied by 
Papadakis in \cite{MR3571218,MR3500916},
and by Burgos-Garc\'{i}a, Bengochea,  and Delgado in 
\cite{burgosTwoEqualMasses,MR3715396}.
A computer assisted study by Burgos-Garc\'{i}a, Lessard, and Mireles James 
proves the existence of some spatial periodic orbits for the CRFBP \cite{jpJaimeAndMe}.
Regularization of collisions are studied by \'{A}lvarez-Ram\'{i}rez, Delgado, and Vidal in 
\cite{MR3239345}.  Chaotic motions were studied numerically by Gidea and Burgos in \cite{MR2013214}, 
and by \'{A}lvarez-Ram\'{i}rez and Barrab\'{e}s in \cite{MR3304062}.

Perturbative proofs of the existence of chaotic motions are
found in the work of She, Cheng and Li
\cite{MR3626383,MR3158025,MR3038224},
and also in the work of 
Alvarez-Ram\'\i rez,  Garc\'i{a},  Palaci\'{a}n,  and Yanguas
\cite{chaosCRFBP}.
A Hill's problem is derived from the CRFBP and its periodic orbits are studied 
by Burgos-Garc\'{i}a and Gidea in \cite{MR3554377,MR3346723}.

Blue sky catastrophes in the CRFBP are studied by 
Burgos-Garc\'{i}a and Delgado in \cite{MR3105958}, and by 
Kepley and Mireles James in \cite{shaneAndJay}. This last reference 
develops computer assisted methods of proof for 
verifying the hypotheses of the theorems of Hernard and Devaney.
The authors of the last reference cited further study the blue
sky catastrophes for the CRFBP in  \cite{MR3919451}, as 
discussed already in the introduction.

\begin{figure}[!t]
\centering
\includegraphics[width=5.5in]{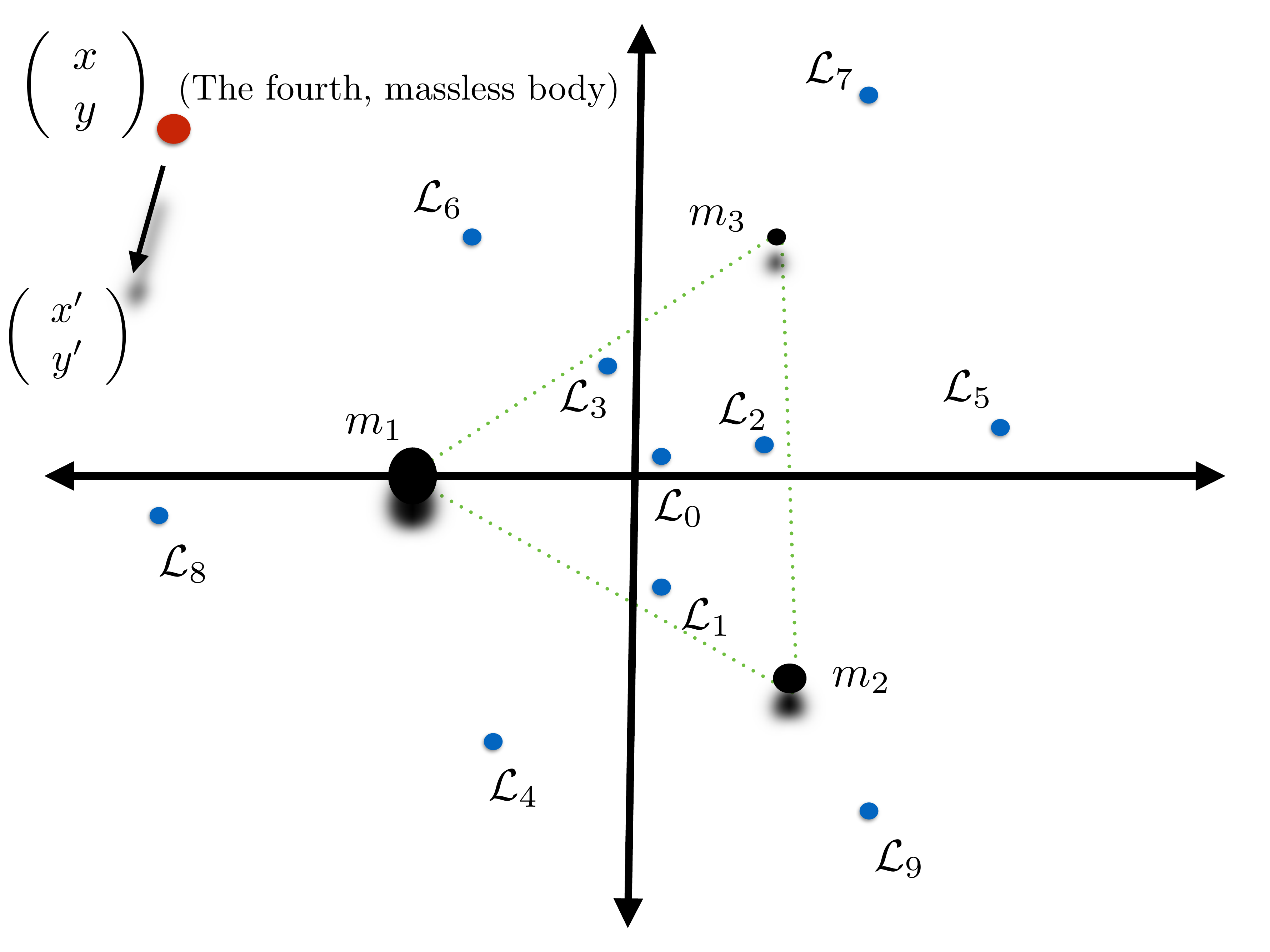}
\caption{\textbf{Configuration space for the CRFBP:} The three 
primary bodies with masses $m_1,m_2,$ and $m_3$ are 
arranged in the equilateral triangle configuration of 
Lagrange --  a relative equilibrium solution of the 
three body problem.    Transforming to a
co-rotating frame one considers the motion of a fourth
massless body.  The equations of motion have 
$8$, $9$, or $10$ equilibrium solutions
denoted by $\mathcal{L}_j$
for $0 \leq j \leq 9$.  The number of libration points,
and their stability, varies depending on $m_1$, $m_2$, and $m_3$. 
In this work we study the points $\mathcal{L}_{0, 4,5,6}$ which 
have saddle focus stability when $m_1 \approx m_2 \approx m_3$.
}\label{fig:rotatingframe}
\end{figure}

\subsection{Equations of motion and libration points for the CRFBP} \label{eq:CRFBP_equationsOfMotion}
The problem describes the motion of a massless particle moving under the influence of three massive
bodies called the primaries.  The primaries have masses $m_1$, $m_2$ and $m_3$, and are constrained
to move in the equilateral triangle configuration of Lagrange.
The masses of the primaries are normalized so that $0 < m_3 \leq m_2 \leq m_1$,
and 
\[
m_1 + m_2 + m_3 = 1,
\] 
and the problem is studied in co-rotating coordinate system.
The coordinates are chosen so that the center of mass of the primaries is at the origin, 
the largest primary is fixed on the $x$-axis, the $x$-axis cuts the side of the triangle opposite the largest primary 
and the smallest primary is in the first quadrant. 

Under these constraints the location of the primaries is a function of only the choice 
of masses. To see this let $p_i$ denote the position of the $i-$th primary 
and write 
\[
p_1 = (x_1, y_1,z_1), \quad \quad 
p_2 = (x_2, y_2,z_2), \quad \quad
\text{and} \quad \quad
p_3 = (x_3, y_3,z_3),
\]
then
\begin{eqnarray*}
x_1 &=&   \frac{-|K| \sqrt{m_2^2 + m_2 m_3 + m_3^2}}{K} \\
y_1 &=&   0 
\end{eqnarray*}
\begin{eqnarray*}
x_2 &=&  \frac{|K|\left[(m_2 - m_3) m_3 + m_1 (2 m_2 + m_3)  \right]}{
2 K \sqrt{m_2^2 + m_2 m_3 + m_3^2}
}  \\
y_2 &=&  \frac{-\sqrt{3} m_3}{2 m_2^{3/2}} \sqrt{\frac{m_2^3}{m_2^2 + m_2 m_3 + m_3^2}},
\end{eqnarray*}
\begin{eqnarray*}
x_3 &=&  \frac{|K|}{2 \sqrt{m_2^2 + m_2 m_3 + m_3^2}}  \\
y_3 &=&  \frac{\sqrt{3}}{2 \sqrt{m_2}} \sqrt{\frac{m_2^3}{m_2^2 + m_2 m_3 + m_3^2}}, 
\end{eqnarray*}
and 
\[
z_1 = z_2 = z_3 = 0, 
\]
where 
\[
K = m_2(m_3 - m_2) + m_1(m_2 + 2 m_3).
\]
Define the potential function 
\[
\Omega(x,y,z) :=
\frac{1}{2} (x^2 + y^2) + \frac{m_1}{r_1(x,y,z)} + \frac{m_2}{r_2(x,y,z)} + \frac{m_3}{r_3(x,y,z)}, 
\]
where $r_i$ represents the distance between the massless body and the $i-$th primary, so that
\[
r_1(x,y,z) := \sqrt{(x-x_1)^2 + (y-y_1)^2 +(z-z_1)^2}, 
\]
\[
r_2(x,y,z) := \sqrt{(x-x_2)^2 + (y-y_2)^2 +(z-z_2)^2},
\]
and
\[
r_3(x,y,z) := \sqrt{(x-x_3)^2 + (y-y_3)^2 +(z-z_3)^2}.
\]
The equations of motion in the co-rotating coordinates are 
\begin{equation}\begin{split}\label{ecuacionesfinales}
\ddot{x}-2\dot{y}&=\Omega_{x},\\
\ddot{y}+2\dot{x}&=\Omega_{y},\\
\ddot{z} &= \Omega_z.
\end{split}
\end{equation}

The system admits between $8$ and $10$ equilibrium solutions depending on the 
value of mass ratio, all of them lying in the $xy-$ plane. 
Closed form formulas for the locations of the equilibrium solutions
do not exist and in practice it is necessary to numerically compute
their locations once the mass ratios are fixed.  
A schematic describing the locations of the 10 equilibrium points
along with our naming conventions in the case when the 
$m_1 \approx m_2 \approx m_3$,  is given in Figure \ref{fig:rotatingframe}.

\begin{figure}[!t]
\centering
\includegraphics[width=6.0in]{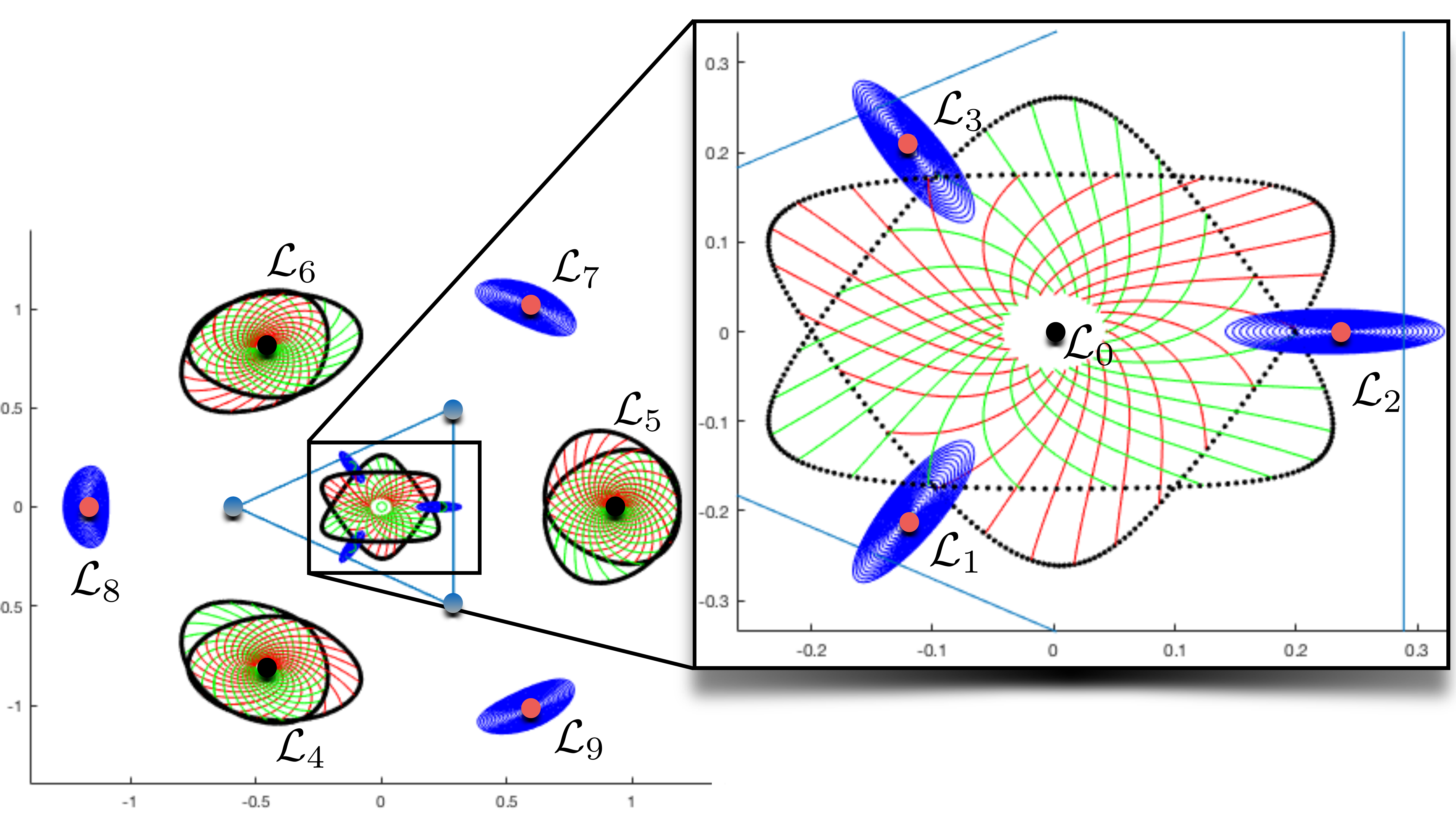}
\caption{\textbf{Local dynamics for the triple Copenhagen problem:}
when $m_1 = m_2 = m_3 = 1/3$ the libration points
$\mathcal{L}_j$ for $j = 1,2,3,7,8,9$ have center $\times$ saddle
stability, while for $j = 0, 4,5,6$ they are saddle-focus equilibria.
The figure illustrates the two dimensional center manifold in the 
former case and the two dimensional stable (green) and unstable (red)
manifolds in the latter case.  The center manifolds are populated by 
planar Lyapunov periodic orbits.  The saddle focus equilibria can exhibit
blue sky catastrophes hence are the starting point for the present work.  
}\label{fig:localDynamics}
\end{figure}

\subsection{Homoclinic dynamics in the planar problem}\label{sec:Planar}
In this section we describe the homoclinic dynamics associated with the saddle-focus
equilibrium solutions in the case that $m_1 = m_2 = m_3 = 1/3$.  We refer 
to this as \textit{the triple Copenhagen problem}, as the case of equal masses in 
the CRTBP is traditionally referred to as the Copenhagen problem.
The material in this section is discussed in much more detail in \cite{MR3919451}, on 
which the present work builds. 

The local invariant manifold structure in the triple Copenhagen problem is 
illustrated in Figure \ref{fig:localDynamics}.  The Figure depicts the fact that 
$\mathcal{L}_j$ for $j = 1,2,3,7,8, 9$ have saddle $\times$ center type stability.
Because of this, there is a planar family of Lyapunov orbits associated with 
 these libration points.  The periodic orbits foliate the attached center manifolds, 
and are illustrated by concentric blue circles in the Figure  \ref{fig:localDynamics}.  

When $j = 0, 4, 5, 6$ the libration points have saddle-focus stability.
That is, each of these libration points have a complex conjugate pair 
of stable and a complex conjugate pair of unstable eigenvalues.
The attached two dimensional stable and unstable manifolds
are foliated by orbits which converge 
exponentially to the libration point in forward and backward time.
The 2D stable/unstable orbits are illustrated by the green (unstable) 
and red (stable) curves respectively, and give a sense of the location
of the local stable/unstable manifolds.

\begin{figure}[!t]
\centering
\includegraphics[width=6.0in]{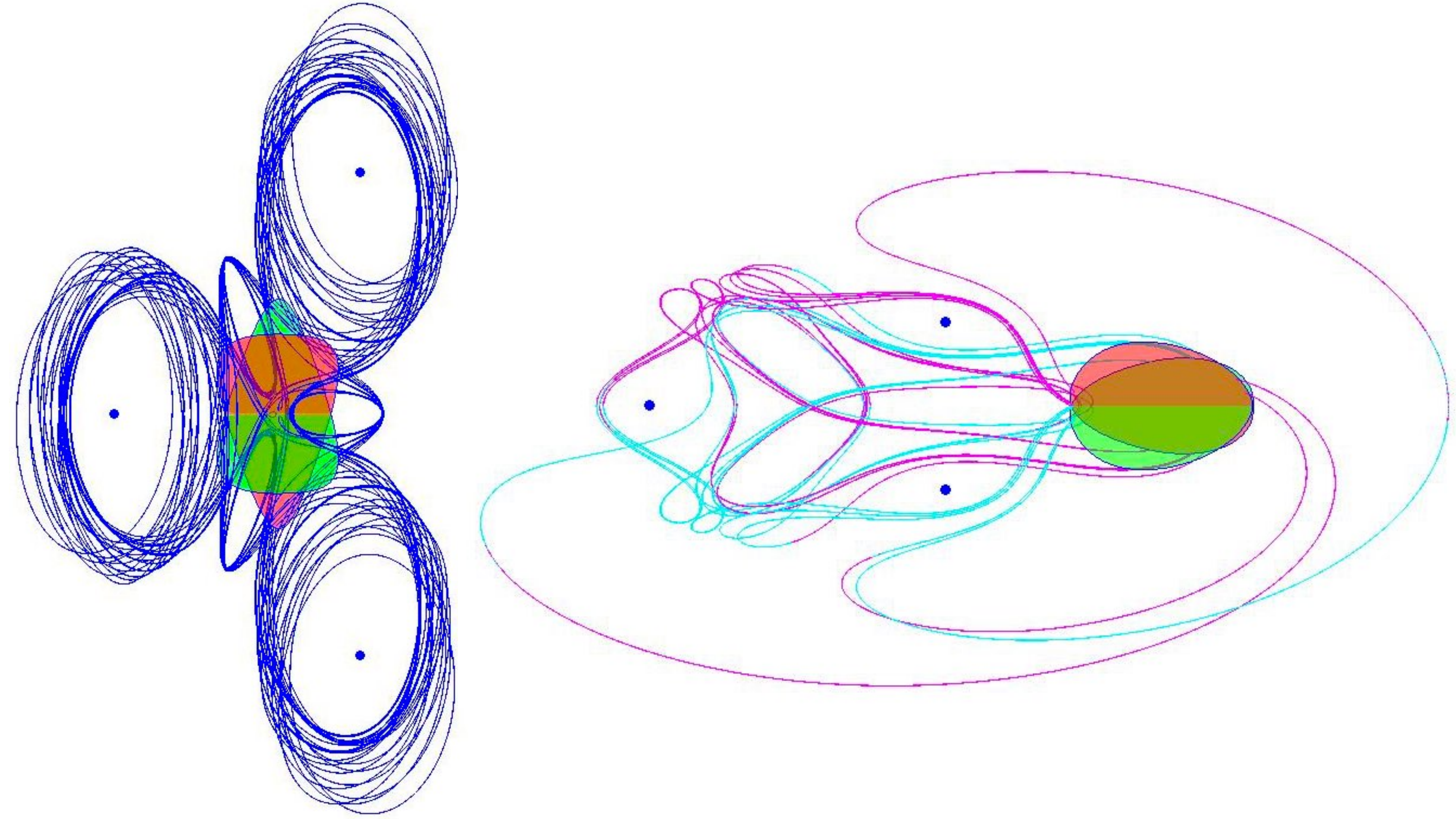}
\caption{\textbf{Homoclinic webs at $\mathcal{L}_0$ and $\mathcal{L}_5$ in the 
triple Copenhagen problem:} (Figure 9. from \cite{MR3919451}.  Reproduced with
permission of the authors) Left - 
the first $42$ homoclinic connections at $\mathcal{L}_0$.   Right - the first $23$
homoclinics at $\mathcal{L}_5$.  Local stable/unstable manifolds of the libration points
are colored in red and green respectively.  In both cases the complicated looking 
``web'' of homoclinic intersections is organized by the six simple shortest homoclinic 
motions.  See Figures \ref{fig:homoclinics_L0} and \ref{fig:homoclinics_L6}
for the six fundamental homoclinic motions at $\mathcal{L}_0$ and $\mathcal{L}_5$
respectively.  Those at $\mathcal{L}_{4,6}$ are obtained by symmetry. 
}\label{fig:homoclinicWeb}
\end{figure}

The main topic of  \cite{MR3919451} is 
to describe the geometry of the homoclinic web
-- the set of all intersections between the stable and unstable 
manifolds -- attached to the saddle focus equilibrium solutions in 
the triple Copenhagen problem.
The authors developed and deployed the following search procedure.
\begin{itemize}
\item \textbf{Step 1:} Compute high order polynomial approximations of the 
local stable/unstable manifolds. Mesh the boundary of the local approximation
into a system of one dimensional arcs. 
\item \textbf{Step 2:} Extend the local approximations by Taylor integration of the 
one dimensional boundary arcs for time $\pm\tau$.  Each step of Taylor integration results in a 
two dimensional manifold patch.  After integrating the complete system of arcs the 
result is a larger local manifold approximation.  
\item \textbf{Step 3:} Check the stable against the unstable manifold patches produced in 
step $2$ for approximate intersections.  If none are found then no intersections exist up to 
time $2 \tau$.  If an approximate intersection is found it is verified/refined using a boundary 
value solver.
\item  \textbf{Repeat:} collapse the manifold patches from Step $3$ onto their outer boundaries, 
obtaining a new system of boundary arcs for the local invariant manifold.  Then repeat 
Steps 2 and 3 as desired.  At the end of the $N$-th step the original local approximations have
been extended by time $N\tau$ thus locating all intersections up to time $2 N \tau$. 
\end{itemize}
In practice the scheme described above is combined with sophisticated step size 
selection and remeshing schemes which insure accuracy and efficiency.

As the algorithm runs all connections it locates are sorted and stored according to 
the ``time of flight'' of the orbit -- that is, the time it takes for the orbit to transition 
from the boundary of the initial unstable manifold approximation to the boundary 
of the initial stable manifold approximation.  Comparing times of flight provides a 
precise notion of ``shortest'' connections.  Complexity of the homoclinic connections
can be quantified by computing winding numbers with respect to the primary 
bodies and the libration points.

Step 1 utilizes the parameterization method as described in 
\cite{MR1976079,MR2177465}.  See also the book of 
\cite{mamotreto}.  The stable/unstable manifolds illustrated in Figure \ref{fig:localDynamics}
were computed using this method. 

Step 2 uses the methods of analytic continuation for growing atlases
of local stable/unstable invariant manifolds developed in \cite{manifoldPaper1}.
To see an illustration of how the local invariant manifolds grow see
in the triple Copenhagen problem at $\mathcal{L}_0$ and $\mathcal{L}_5$, 
see Figures 7 and 8 of  \cite{MR3919451}.
Figure \ref{fig:homoclinicWeb} illustrates the results of running the 
algorithm for $\tau = 4$ time units, hence locating all homoclinics with time 
of flight up to $8$ time units (a certain velocity constraint which removes
a small neighborhood of each of the primaries is also imposed).

The search procedure resulted in dozens of distinct homoclinic orbits at the libration points
$\mathcal{L}_0$ and $\mathcal{L}_5$.  
In the case of equal masses it is sufficient to study only these 
equilibrium solutions as a rotation by $\pm 120$ degrees transforms 
$\mathcal{L}_5$ into $\mathcal{L}_{4,6}$ respectively.
Similarly, at $\mathcal{L}_0$ rotation of any homoclinic 
by $\pm 120$ degrees yields another homoclinic
connections. 

Further examination of the connecting orbits located using the search procedure 
just described reveals the main result of  \cite{MR3919451}, which is that the homoclinic
web at each of the libration points appears to be organized by the
six shortest connections.   More precisely, each of the six shortest homoclinic 
orbits can be thought of as a letter in a symbol alphabet, and all of the homoclinic 
orbits located in the search shadow some combination of these fundamental
letters.  They are ``words'' built from the basic alphabet.  Put another way,   
only six fundamental homoclinic motions govern the complete web of connections.
The six fundamental homoclinic motions are $\mathcal{L}_0$ and $\mathcal{L}_5$
are illustrated in Figures \ref{fig:homoclinics_L0} and \ref{fig:homoclinics_L6}
respectively.

\begin{figure}[!t]
\centering
\includegraphics[width=4.75in]{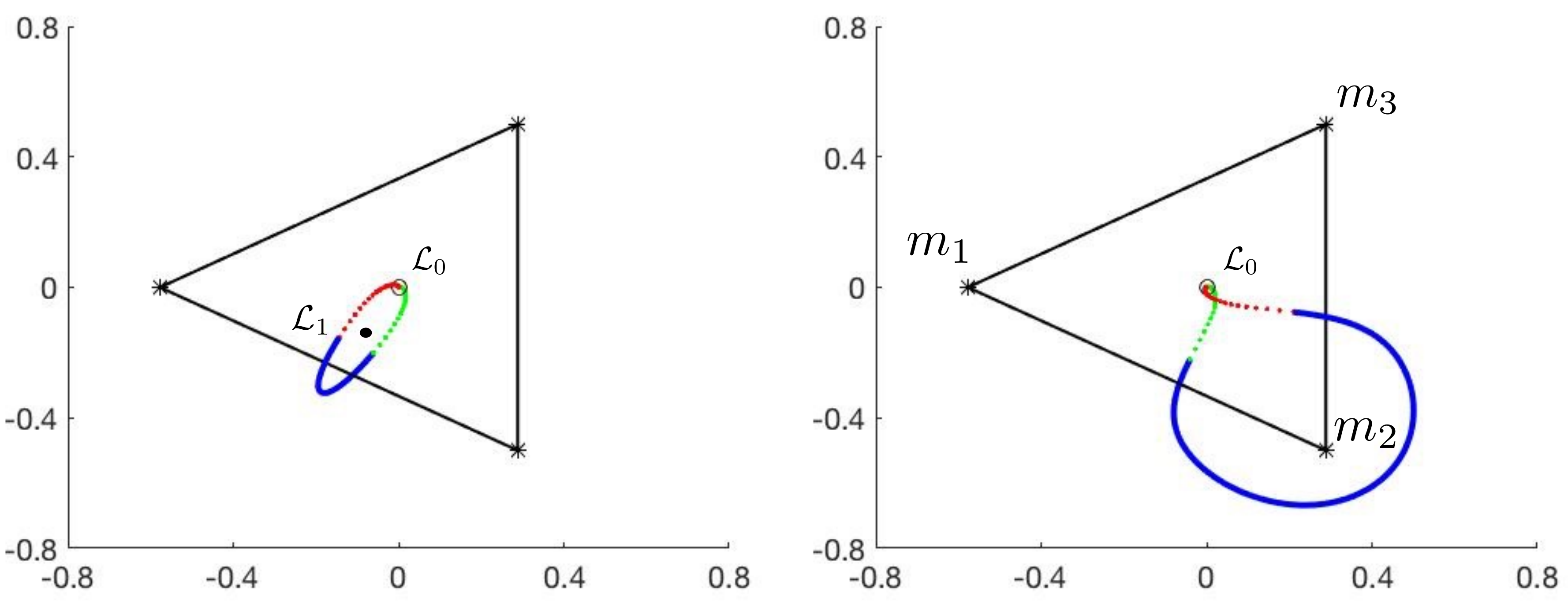}
\caption{
\textbf{Homoclinic alphabet at $\mathcal{L}_0$}: 
(Figure 10 from  \cite{MR3919451}.  Reproduced with permission of the authors).
Homoclinic motions at $\mathcal{L}_0$ with shortest times of flight.
Green and red arc segments
depict the asymptotic behavior of the homoclinic -- portion on the original stable/unstable 
manifold parameterizations.  The blue portion of the arc is the part of the orbit located by 
growing/searching the manifold atlases.  The shortest motion winds once around $\mathcal{L}_1$, 
while the second shortest motion winds once around a primary body.  Four addition basic homoclinics
are obtained by $\pm 120$ degree rotations, yielding the six letter alphabet.  The homoclinic 
web at $\mathcal{L}_0$, illustrated in the left frame of Figure \ref{fig:homoclinicWeb}, is organized by 
these six basic motions.}
\label{fig:homoclinics_L0}
\end{figure}

\begin{figure}[!h]
\centering
\includegraphics[width=4.75in]{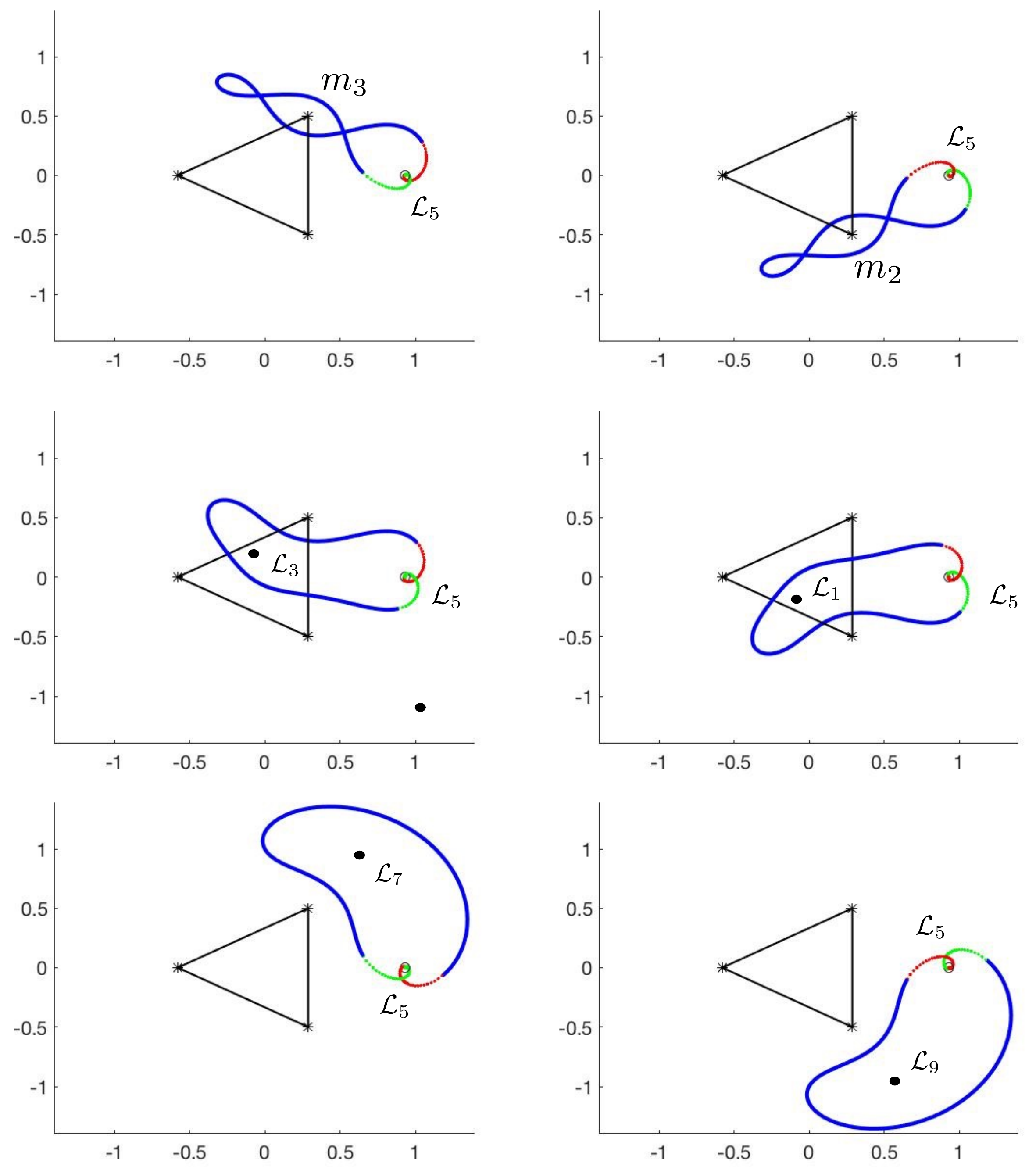}
\caption{
\textbf{Homoclinic alphabet at $\mathcal{L}_5$}: 
(Figure 18 from  \cite{MR3919451}.  Reproduced with permission of the authors).
 Homoclinic motions at $\mathcal{L}_5$ with shortest times of flight. 
  Green and red arc segments
depict the asymptotic behavior of the homoclinic -- portion on the original stable/unstable 
manifold parameterizations.  The blue portion of the arc is the part of the orbit located by 
growing/searching the manifold atlases.  Each basic motion  winds once around either a 
primary body or a libration point.  The basic motions are $\mathcal{L}_{4,6}$ are obtained by 
$\pm 120$ degree rotations. The homoclinic 
web at $\mathcal{L}_5$, illustrated in the right frame of Figure \ref{fig:homoclinicWeb}, is organized by 
these six basic motions.
}\label{fig:homoclinics_L6}
\end{figure}

\begin{figure}
\includegraphics[width=5.5in]{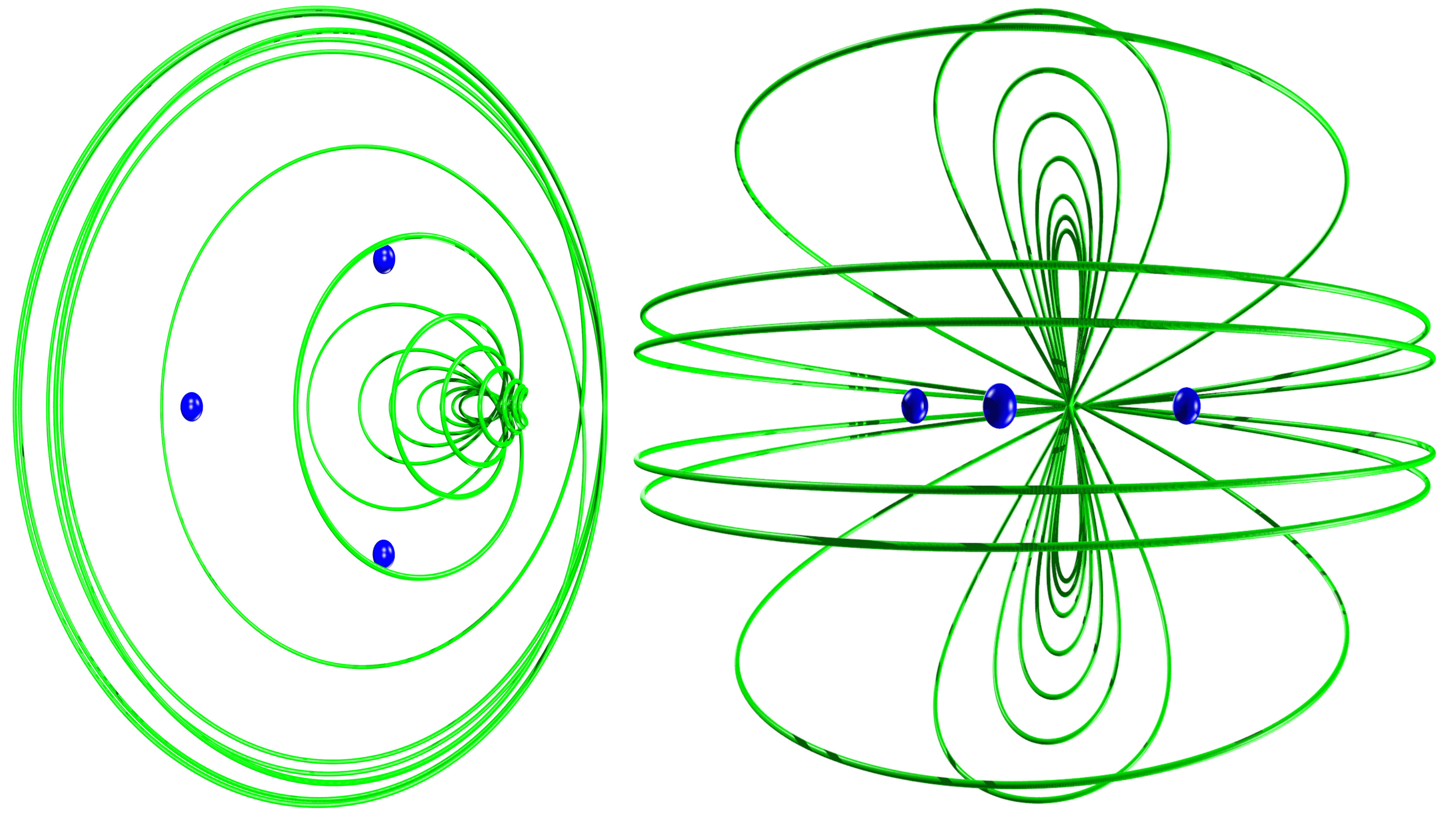} 
\caption{\textbf{Vertical Lyapunov family at $\mathcal{L}_5$:} spatial family of 
periodic orbits attached to $\mathcal{L}_5$ in the triple Copenhagen problem.
The ``tube'' of orbits is parameterized by energy/frequency, so that (locally)
the periodic orbits are isolated in the energy level.  Orbits near $\mathcal{L}_5$
inherit its stability, so that many of the orbits in the picture have complex
conjugate stable/unstable Floquet multipliers.
The families at $\mathcal{L}_{4,6}$ are obtained by $\pm 120$ degree rotations.}
\label{fig:vert_L5}
\end{figure}

\begin{figure}
\includegraphics[width=6.5in]{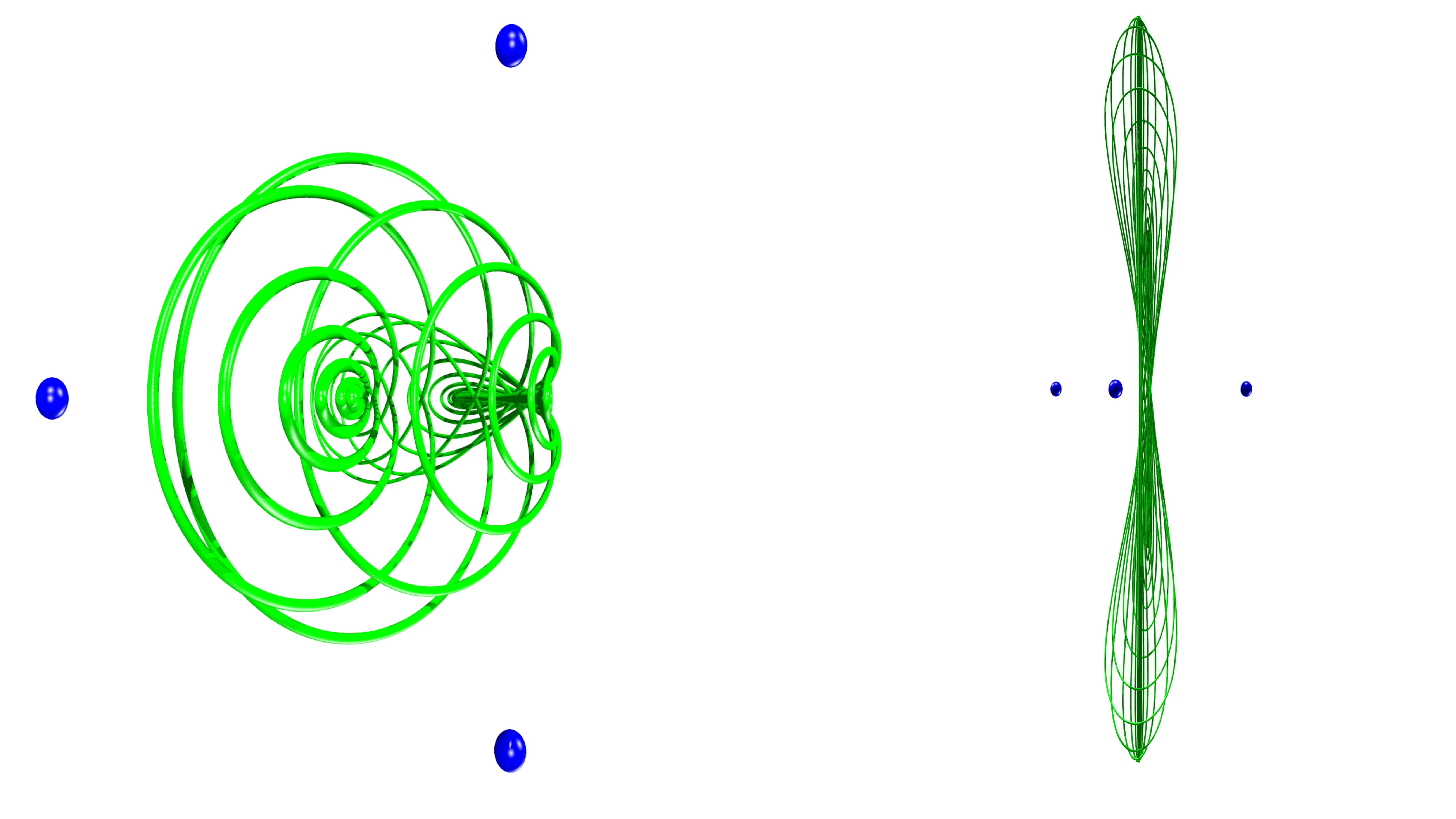} 
\caption{\textbf{Vertical Lyapunov families at $\mathcal{L}_0$ and $\mathcal{L}_2$:} 
spatial families of periodic orbits at $\mathcal{L}_{0,2}$.  The $\mathcal{L}_0$ family 
is coincident with the $z$-axis and the $\mathcal{L}_2$ family accumulates on 
the $\mathcal{L}_0$ family.  
Orbits near $\mathcal{L}_0$
inherit its stability, so that many of the orbits in the picture have complex
conjugate stable/unstable Floquet multipliers.
The $\mathcal{L}_{1,3}$ families are obtained from the 
$\mathcal{L}_2$ family by $\pm 120$ degree rotations.}
\label{fig:vert_L0}
\end{figure}

\subsection{Vertical Lyapunov Families} \label{sec:lyapFamilies}
As mentioned briefly above, the spatial CRFBP inherits the libration points of the planar problem.
 Moreover, the spatial problem has no out of plane
 equilibrium solutions.  In terms of stability, each planar equilibrium solutions 
picks up a center direction when embedded in the spatial problem.  That is, 
each of the spatial libration points 
has a purely imaginary pair of eigenvalues $\pm i \omega$ associated with an out of plane 
eigenspace.  The Lyapunov center theorem
\cite{MR0021186,MR2189486,MR96021}
is used to prove that there is a one parameter family of periodic orbits 
tangent to the vertical eigenspace of each libration point.
The family can be computed by numerical continuation begun in a small neighborhood 
of the libration point.  

The vertical family at $\mathcal{L}_5$ for the triple Copenhagen problem is illustrated in 
Figure \ref{fig:vert_L5}.  Initially the orbits have a ``figure eight'' shape, with the eight pinched
at the libration point and one lobe above and one below the $z= 0$ plane.  
The family is parameterized by energy and as energy is increased the eight opens up and 
eventually ``tips'', returning to the plane.  The union of the $\mathcal{L}_5$ family forms
a sphere in configuration space enclosing the three primaries.  
The vertical families at $\mathcal{L}_{4, 6}$ are obtained by $\pm 120$ degree rotations.

The situation at $\mathcal{L}_0$ in the triple Copenhagen problem is illustrated in 
Figure \ref{fig:vert_L0}.  Due to the symmetry of the problem the $\mathcal{L}_0$ vertical 
family moves entirely on the $z$-axis.  The $\mathcal{L}_2$ vertical family is illustrated 
in the same picture.  This family also appears to form a sphere, but in this case the orbits 
eventually accumulate on the $\mathcal{L}_0$ family on the $z$-axis.
The vertical families at $\mathcal{L}_{1,3}$ are obtained by $\pm 120$ degree 
rotation of the $\mathcal{L}_2$ family, and hence all three families accumulate at
$\mathcal{L}_0$.  

The existence of the spatial periodic orbits illustrated in Figures \ref{fig:vert_L5}
and \ref{fig:vert_L0}, along with many other such results, are 
proven with computer assistance in 
\cite{MR3896998}.
For small enough out of plane amplitude orbits in the vertical families inherit
their stability from the stability of the planar libration point.  Then at 
$\mathcal{L}_{0, 4,5,6}$ the vertical Lyapunov orbits have complex conjugate 
stable/unstable Floquet  exponents for some range of out of plane amplitudes.

%
%\section{Two point boundary value problems for cycle-to-cycle connections}	
%In this section we describe our formulation of the two point boundary value problem (BVP) 
%describing a cycle-to-cycle connecting orbit.  The material is quite standard, and we refer 
%to the work of \cite{MR2511084} for a classic description of numerical continuation methods 
%and many additional references.  The interested reader may also want to consult the 
%closely related work of \cite{MR2989589}.  The major difference between the methods employed
% in the works just cited and those of the present work is that we will use the parameterization method 
%(discussed in Section \ref{sec:parameterizationMethod}) to develop high order 
%Fourier-Taylor series representations of the local stable/unstable manifolds of the periodic orbit.  
%This influences both our choice of notation and the formulation of the BVP below.
%
%So, suppose that $f \colon \mathbb{R}^n \to \mathbb{R}^n$ is a smooth vector field
%and that $\gamma_1, \gamma_2 \colon \mathbb{R} \to \mathbb{R}^n$ are periodic 
%solutions of the differential $x' = f(x)$ equation with periods $T_1, T_2 > 0$ respectively.  	
%Suppose that $\gamma_1$ has $m$ unstable Floquet multipliers 
%$\lambda_1^u, \ldots, \lambda_{m}^u \in \mathbb{C}$.
%Similarly,  let $\lambda_1^s, \ldots, \lambda_k^s \in \mathbb{C}$ denote the $k$ stable 
%Floquet multipliers of  $\gamma_2$. 
%		
 
%%%%%%%%%%%%%%%%%%%%%%%%%%%%%%%%%%
% Parameterization method	
%%%%%%%%%%%%%%%%%%%%%%%%%%%%%%%%%%
		
\section{Parameterization of stable/unstable manifolds} \label{sec:parameterizationMethod}
In this section we review the parameterization method for stable/unstable manifolds attached to 
periodic solutions of ordinary differential equations, with an eye toward numerical calculations.
Much of the material has appeared in other places, and is included here for the benefit of the 
reader not familiar with these developments.  Indeed it is our hope that the present section 
provides a useful introduction to the ideas in context if a highly non-trivial application problem.
We also stress that a concrete description of the method 
\textit{accompanied by a complete description of the numerical implementation}
for a periodic orbit in a gravitational $N$-body problem having complex 
conjugate Floquet exponents -- and hence a three dimensional stable/unstable manifold-- 
has not appeared before.  Hence our little tutorial has some novelty.
However the reader familiar with this material may want to skip this section 
upon first reading.

\begin{figure}
\includegraphics[width=6.5in]{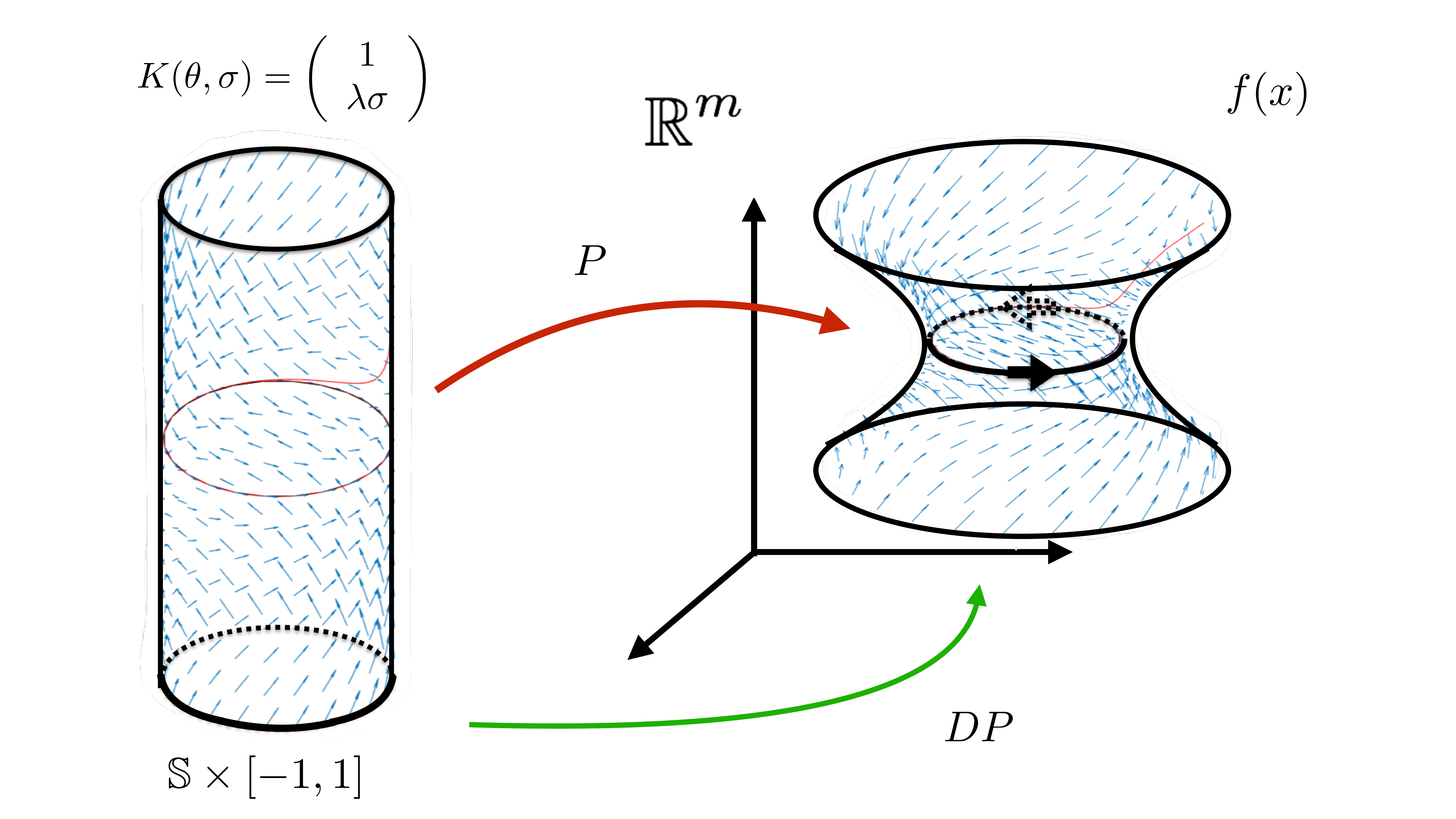} 
\caption{\textbf{Differential geometry of the parameterization method:} the 
geometric idea behind the parameterization method for vector fields is that 
a model vector field $K$, when pushed forward by the parameterization $P$, 
should match the given vector field $f$ on the image of $P$.  Under this assumption
the map $P$ takes orbits of $K$ to orbits of $f$ on the image of $P$.  Since the orbits 
of $K$ are known, we discover the dynamics on the image of $P$.  If $K$ models 
stable (respectively unstable) dynamics for a periodic orbit then $P$ parameterizes
a local stable (respectively unstable) manifold.  The relationship just described is 
quantified in the invariance Equation \eqref{eq:conjugacy}.}
\label{fig:parmMethod}
\end{figure}

The parameterization method is a functional analytic framework for studying invariant manifolds
which is useful in both theoretical and numerical settings.  The method has been successfully 
applied to the study of stable/unstable manifolds attached to 
fixed points of nonlinear maps between Banach spaces 
\cite{MR1976079,MR1976080}, invariant circles and their whiskers 
in quasi-periodically forced maps, 
stable/unstable manifolds of normally hyperbolic invariant tori
\cite{MR2240743,MR2289544,MR2299977}, 
stable/unstable manifolds attached to equilibrium and periodic 
solutions of parabolic PDEs \cite{jpRafPDE,parmPDE}, and 
quasi-periodic solutions and invariant tori in infinite dimensional 
systems \cite{MR2505176,MR2528494,rafaelStDelay_I,rafaelStDelay_II}.
The parameterization method has also been used to formulate 
KAM theorems without the use of action angle variables
\cite{MR2122688,MR2966749},
and dissipative KAM theorems 
\cite{MR3062760,MR3095277}.
Several works focusing on numerical aspects of the method are 
\cite{maximeJPMe,parmChristian,kotParm,fastSlow,MR3068557,MR3032844,MR2507323,jayChrisParmDDE,MR2851901}.

We make special mention of the works of 
\cite{MR2177465,MR2551254,MR3118249,doi:10.1137/140960207,poManProofs,chebManifolds}.
These papers deal with various aspects of the parameterization method 
for stable/unstable manifolds attached to periodic solutions of differential equations, 
and are the basis of the approach to invariant manifolds employed in the present 
work.  The rest of the section is devoted to the review of this material.  

\subsection{Invariance and homological equations}
Since the invariant manifolds we consider in the present work have complex 
Floquet exponents we describe the method in the context of complex vector fields.
So, consider the analytic vector field
$f:\mathbb{C}^m \to \mathbb{C}^m$ and the associated first order 
system of ordinary differential equations $\dot x = f(x)$.  Suppose that 
$\gamma:\mathbb{R} \to \mathbb{C}^m$ and $T > 0$ have
\[
\frac{d}{dt}\gamma(t) = f(\gamma(t)),
\]
and 
\[
\gamma(t + T) = \gamma(t), 
\]
for all $t \in \mathbb{R}$.
We say that $\gamma$ is a $T$-periodic solution of the differential equation.  

Assume now that $\gamma$ has $n$ stable (or unstable) Floquet exponents, which 
we denote by $\lambda_1,\lambda_2,\hdots,\lambda_n \in \mathbb{C}$.
Let $\mathbb{D}^n \subset \mathbb{C}^n$ denote the $n$-dimensional unit
poly-disk.
 Then it is natural to look for a parameterization 
$P \colon \mathbb{R} \times \mathbb{D}^n \to \mathbb{C}^m$  -- $T$ periodic in the first variable --
of the associated stable (or unstable) manifold.  

Let $\Lambda$ denote the $n\times n$ diagonal matrix of stable exponents.
Then an appropriate model of the stable manifold is the cylinder 
$\mathbb{S}^1 \times \mathbb{D}^n$ endowed with the linear vector field
\[
K(\theta, \sigma) = \left(
\begin{array}{c}
1 \\
\Lambda \sigma 
\end{array}
\right).
\]
Observe that this field has a $1$-periodic solution at $\sigma = 0$, and that all orbits 
with $\sigma \neq 0$ converge exponentially to this periodic orbit, making this the 
simplest possible model for the dynamics on the stable manifold.

The geometric idea behind the parameterization method is to look for a parameterization $P$ satisfying the 
infinitesimal conjugacy
\[
DP(\theta, \sigma) K(\theta, \sigma) = f(P(\theta, \sigma)).
\]
The equation demands that the push forward of the vector field $K$
by $P$ is equal to the vector field $f$ restricted to the image of $P$.
If the vector fields are equal then they generate the same dynamics (same orbits).
But the orbits of $K$ are known explicitly, and we have that any such $P$
parameterizes a local stable manifold for $\gamma$.
The situation is illustrated schematically in Figure \ref{fig:parmMethod}.
Expanding the first order differential operator $DP \circ K$ on the left leads to 
the invariance equation
\begin{equation}\label{eq:conjugacy}
\frac{d}{d\theta}P(\theta,\sigma) +
\sum_{i=1}^n \lambda_i\sigma_i \frac{\partial}{\partial \sigma_i} P(\theta,\sigma) 
= f(P(\theta,\sigma)),
\end{equation}
which is a first order system of PDEs for $P$.
We impose the first order constraints
\begin{equation}\label{eq:ConditionA0}
P(\theta,0)= \gamma(\theta)
\end{equation}
and
\[
\frac{\partial}{\partial\sigma_i} P(\theta,0) = v_i(\theta)
\]
for $i=1,2,\hdots,n$ where $v_i(\theta)$ -- the stable (or unstable) normal bundle associated 
with the Floquet exponent $\lambda_i$ -- solves the linear differential equation
\begin{equation} \label{eq:bundleEqn}
-v_j(t)' + Df(\gamma(t)) v_j(t) =  \lambda_j v_j(t), 
\end{equation}
for each $1 \leq j \leq n$.  The function $v_j$ is either $T$ periodic or $2 T$ periodic 
depending on wether the associated bundle is orientable or not.

%
%For any point $(\theta,\sigma)$ in the domain of the parameterization, using \eqref{eq:conjugacyFlow} to define trajectories $x(t)=P(\theta+t,e^{\Lambda t}\sigma)$ will provide a solution of $\dot u = f(u)$, since one have that
%\begin{align*}
%\frac{d}{dt} x(t) &= \frac{d}{dt} P(\theta +t,e^{\Lambda t}\sigma) \\
%                  &= \frac{d}{d\theta}P(\theta +t,e^{\Lambda t}\sigma) +\sum_{i=1}^n \lambda_i\sigma_i \frac{\partial}{\partial \sigma_i} P(\theta+t,e^{\Lambda t}\sigma) \\
%                  &= f(P(\theta+t,e^{\Lambda t}\sigma) \\
%                  &= f(x(t)).
%\end{align*}
%We note that the trajectory $x(t)$ starts at $P(\theta,\sigma)$ and accumulate to the periodic orbit $\gamma(t)$ when $t \to \infty$ ($t\to -\infty$) if the eigenvalues used to define $\Lambda$ have negative real parts (positive real parts). This follows from the continuity of $P$ and the condition \eqref{eq:ConditionA0}. The assumption of continuity is possible as it is known that if $f$ is analytic then $P$ is as well. This computation also displays that the image of the parameterization $P$ belongs to the stable (or unstable) manifold, as desired.
%

Let $\Phi \colon \mathbb{C}^m \times  \mathbb{R} \to \mathbb{C}^m$ be the flow generated by $f$.
It can be shown (see any of the references given at the end of the last section) that if $P$
is a solution of the infinitesimal invariance Equation \eqref{eq:conjugacy}, then $P$ satisfies 
the flow conjugacy 
\begin{equation} \label{eq:flowConj}
\Phi(P(\theta, \sigma), t) = P\left(\theta + t, e^{\Lambda t} \sigma \right), 
\end{equation}
for all $t \geq 0$ and $\sigma \in \mathbb{D}^n$.
Then in fact the parameterization method recovers the dynamics on the 
manifold in addition to the embedding.

Since $f$ is analytic we look for an analytic $P$.  To this end suppose that $P$
has the power series expansion

\begin{equation}\label{eq:Ptaylorexpansion}
P(\theta,\sigma)= \sum_{|\alpha|=0}^\infty A_\alpha(\theta) \sigma^\alpha, 
\end{equation}
where for each multi-index $\alpha= (\alpha_1,\alpha_2,\hdots,\alpha_n) \in \mathbb{N}^\alpha$,
\[
|\alpha| = \alpha_1 +\alpha_2 + \hdots + \alpha_n
\] 
and given $\sigma \in \mathbb{D}^n$ we denote
\[
\sigma^\alpha = \sigma_1^{\alpha_1}\cdot \sigma_2^{\alpha_2}\cdot \hdots \cdot \sigma_n^{\alpha_n}.
\]
Moreover, each of the coefficients $A_\alpha \colon \mathbb{R} \to \mathbb{C}^m$ are $\mathbb{T}-$
periodic complex functions. Plugging the expansion \eqref{eq:Ptaylorexpansion} 
in \eqref{eq:conjugacy} and matching power of $\sigma$ leads to the 
\textit{homological equation} for  $A_\alpha(t)$ given by 
\begin{equation}\label{eq:conjugacyalpha}
\frac{d}{d\theta}A_\alpha(\theta) +\langle \alpha, \Lambda \rangle A_\alpha(\theta) = f(P(\theta,\sigma))_\alpha.
\end{equation}
where 
\[
\langle \alpha, \Lambda \rangle = \alpha_1\lambda_1+\hdots+\alpha_n\lambda_n.
\]

\begin{remark}[Non resonance criteria] \label{rem:nonRes}
{\em
We say that the Floquet exponents 
$\lambda_1,\hdots,\lambda_n$ are resonant at order 
$k$ if there exist $\alpha \in \mathbb{N}^n$ so that 
$|\alpha| = k$ and 
\[
 \langle \alpha,\Lambda \rangle = \lambda_i
\]
for some $\lambda_i$ a Floquet exponent of $\gamma$.
We recall (see again any of the references cited in the last paragraph of the previous 
section) that the homological equations are uniquely solvable to all orders if
and only if there are no resonances for $|\alpha|\geq 2$. 
In this case we say that the Floquet exponents are non-resonant.  
For the examples considered in the remainder of the paper
the periodic orbits had a single complex conjugate pair of stable/unstable 
Floquet exponents and all the other exponents are purely imaginary.  
In such a case there is no possibility of resonances for $|\alpha| \geq 2$, 
and the parameterization coefficients $A_\alpha(\theta)$ are guaranteed to 
be defined to all orders.}
\end{remark}

Since solutions of the homological equations
are $T$ periodic for all $\alpha$, it is natural to expand it using Fourier series. 
Letting $\omega = \frac{2\pi}{T}$ where $T$ is the period of $\gamma$
we look for $A_\alpha$ expressed as
\[
A_\alpha(\theta) = \sum_{k\in \mathbb{Z}} a_{\alpha,k} e^{\im \omega k \theta}.
\]
Then, one can plug the expansion in \eqref{eq:conjugacyalpha} and rewrite the problem 
as the zero of a nonlinear operator defined on the space of Fourier coefficients. 
The focus of next section is to solve \eqref{eq:conjugacyalpha} up to some finite 
order using a finite dimensional Fourier expansion. The process will be explicitly presented 
in the case of the CRFBP.

\begin{figure}[t!]
% display of a manifold
% the manifold is at L0
\subfigure{\includegraphics[width=0.6\textwidth]{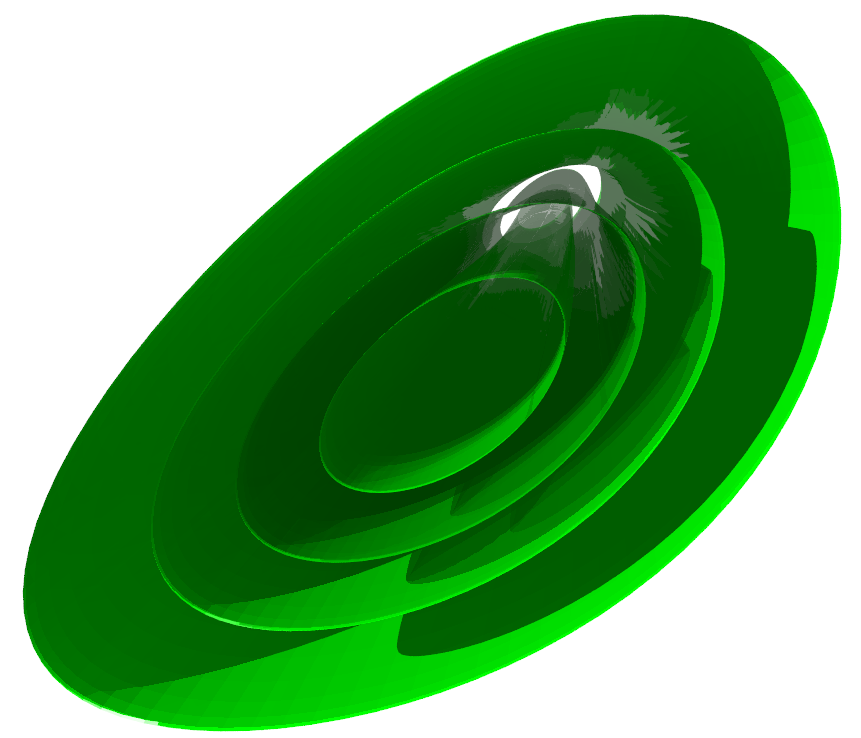}}
\subfigure{\includegraphics[width=0.2\textwidth]{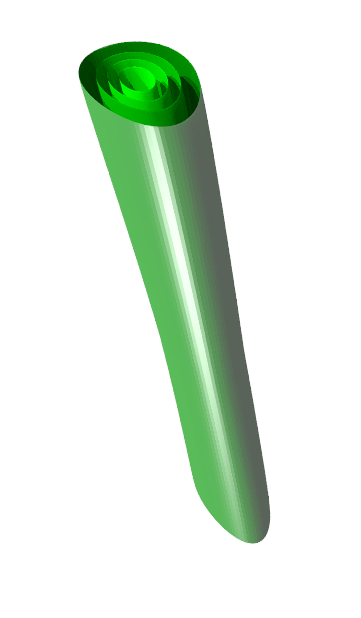}}
\caption{\textbf{Parameterization of the local stable manifold of a vertical Lyapunov orbit at $\mathcal{L}_0$:} 
(left) top and (right) side view when $m_1=0.4$ and $m_2=0.35$. 
The parameterization is computed to Taylor order $5$ with $20$ Fourier nodes per Taylor coefficient. 
The image displays the boundary torus of the parameterization $P(\theta, \sigma, \overline\sigma)$ where 
where $\| \sigma \| = R$ for $R =  0.3, 0.5, 0.7,1$. We remark that in each case the torus is very thin, so that 
in the image each torus looks essentially like a cylinder.
The largest torus in the present Figure is roughly the same size as the one shown in Figure \ref{fig:L0unstable}, 
however that figure illustrates the unstable parameterization. This nevertheless gives a sense of the scale of 
the local parameterizations, namely where are the primaries located.}\label{fig:ManifoldExample}
\end{figure}

\begin{figure}
\subfigure{\includegraphics[width=0.7\textwidth]{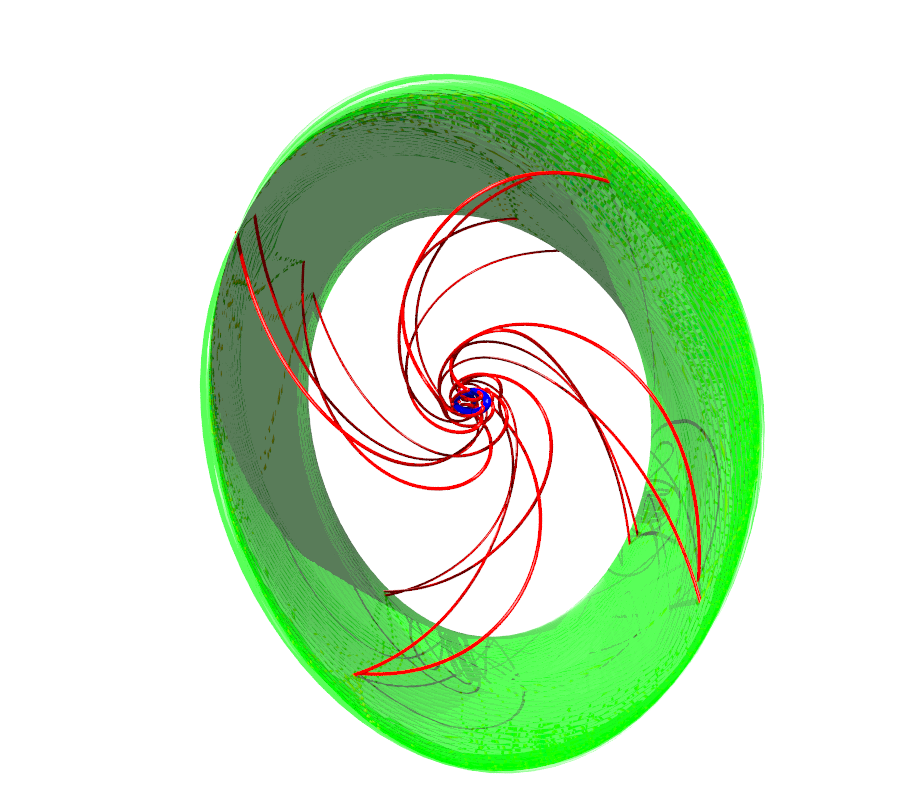} } \\
\subfigure{\includegraphics[width=0.7\textwidth]{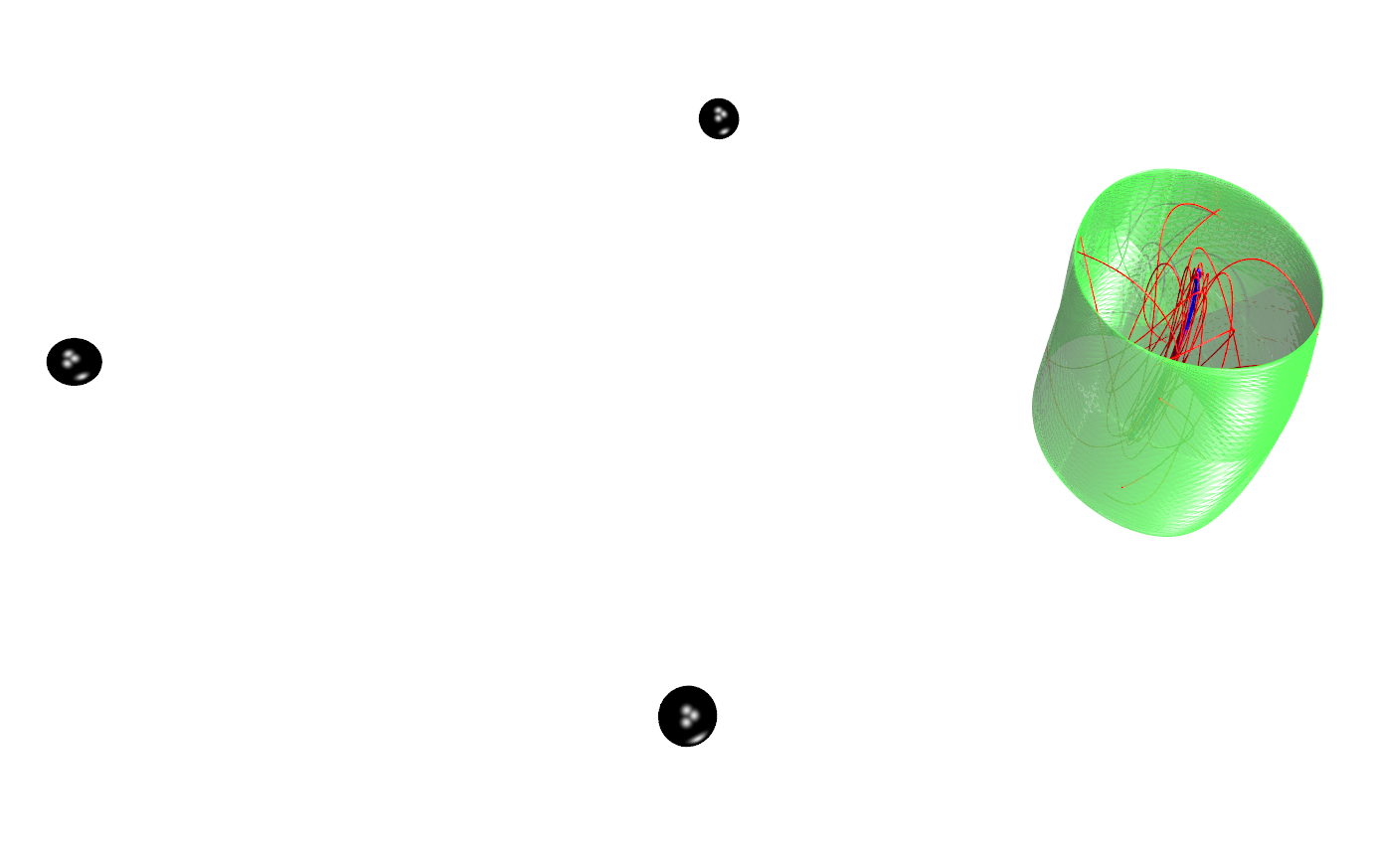} }
\caption{\textbf{Example:}
\textbf{Parameterization of the stable manifold of a vertical Lyapunov orbit at $\mathcal{L}_5$:}
 (top frame) top and (bottom frame) side views in the case of  equal masses. 
 The boundary torus of the parameterized stable manifold is displayed in green, while the periodic orbit itself 
is in blue (torus is very thin). 
We use the conjugacy relation to generate sixteen forward asymptotic trajectories 
with initial data on the boundary, 
giving a sense of the dynamics on the three dimensional stable manifold. The initial value are equally distributed
and the resulting trajectories are displayed in red. The manifold was computed with 
$20$ Fourier modes per Taylor coefficient, taking the Taylor expansion to polynomial order $5$. 
Observe that the image of the parameterization is ``macroscopic'' -- i.e. its size is of the same order
as the sides of the equilateral triangle.  
The same local manifold parameterization 
is used to find homoclinic connections for the periodic orbit, see Figure \ref{fig:homoclinicL5}. }
\end{figure}

\begin{figure}
\subfigure{\includegraphics[width=0.7\textwidth]{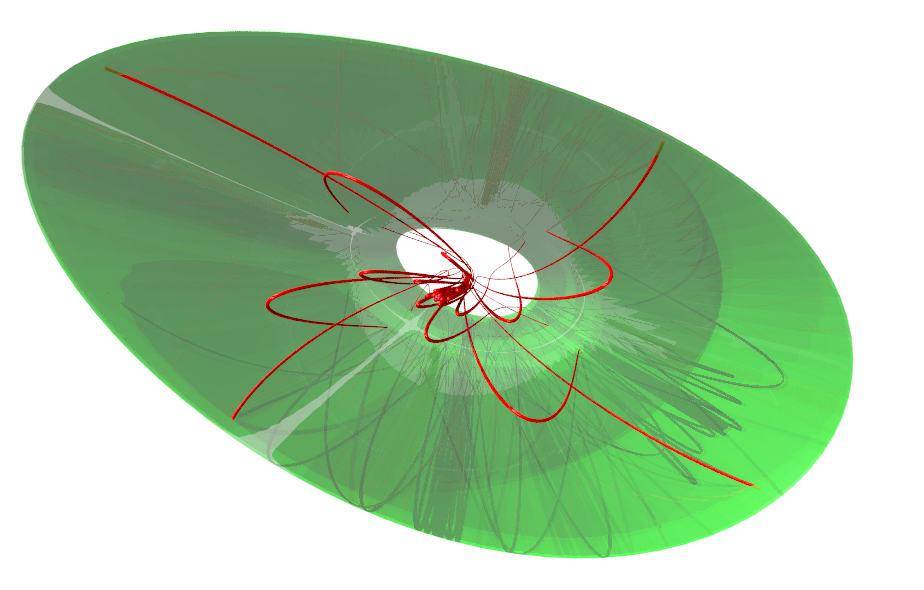} }\\
\subfigure{\includegraphics[width=0.7\textwidth]{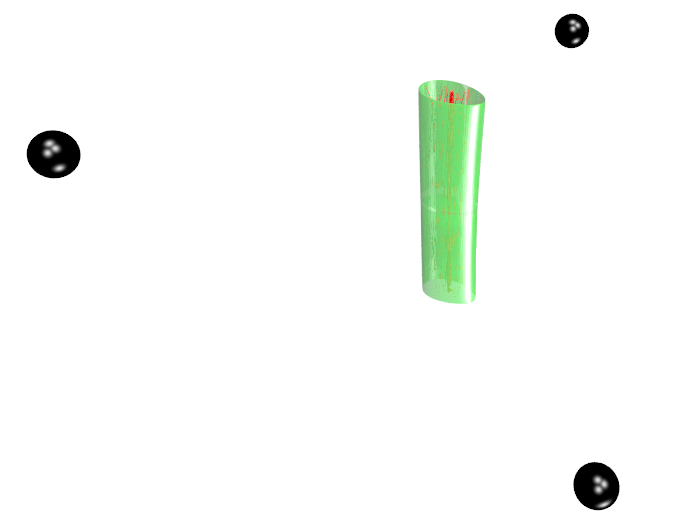} }
\caption{\textbf{Example:}
\textbf{Parameterization of the unstable manifold of a vertical Lyapunov orbit at $\mathcal{L}_0$:}
Top and side view of the local unstable manifold attached to a vertical Lyapunov orbit at $\mathcal{L}_0$ in the CRTBP
with $m_1=0.4$ and $m_2= 0.35$. The boundary torus of the parameterized manifold is displayed in green
(torus is very thin). 
We use the conjugacy relation to simulate forward trajectory for initial data on the boundary,  giving a sense of the
 dynamics on the manifold. The initial value are equally distributed on the domain of the parameterization and the 
 resulting trajectories are displayed in red. The manifold was computed with $20$ Fourier modes per Taylor coefficient, 
 taking the Taylor expansion to polynomial order $5$. The periodic orbit itself is not visible but we note that every trajectory 
 in red accumulates to the orbit in backward time. The same local manifold parameterization is used to find homoclinic 
 connections for the periodic orbit, see Figure \ref{fig:L0spacial315}. }\label{fig:L0unstable}
\end{figure}

\subsection{Parameterized manifolds in the CRFBP}\label{sec:Manifold}

Our goal is to solve Equation \eqref{eq:conjugacyalpha} for the CRFBP.
In fact, we use the the idea discussed in Appendix \ref{sec:autoDiff} and first 
pass to an equivalent polynomial vector field $f \colon \mathbb{R}^9 \to \mathbb{R}^9$. 
Having polynomial nonlinearities greatly simplifies 
the formal series calculations, as Fourier-Taylor series are multiplied as 
follows.

Suppose that $g, h: [0,T] \times \mathbb{D}^n \to \mathbb{C}$ are given by 
\[
g(t,\sigma)= \sum_{|\alpha|=0}^\infty   \sum_{k\in \mathbb{Z}} a_{\alpha,k}e^{\im k \omega t} \sigma^\alpha,
\]
and 
\[
h(t,\sigma)= \sum_{|\alpha|=0}^\infty  \sum_{k\in \mathbb{Z}} b_{\alpha,k}e^{\im k \omega t} \sigma^\alpha.
\]
We refer to $g$ and $h$ as Fourier-Taylor series and to 
\[
a = \{ a_{\alpha,k} \in \mathbb{C} : \alpha \in \mathbb{N}^n ~\mbox{and}~ k \in \mathbb{Z} \},
\]
and
\[
b = \{ b_{\alpha,k} \in \mathbb{C} : \alpha \in \mathbb{N}^n ~\mbox{and}~ k \in \mathbb{Z} \},
\] 
as the Fourier-Taylor coefficients of $g$ and $h$ respectively.  Observe that $g, h$ are $T$ periodic in $t$.

\begin{definition}[Convolution product]\label{def:Coefficients}
The Fourier-Taylor series of the point-wise product $g\cdot h (t,\sigma)$ is
\[
(g\cdot h) (t,\sigma) =  \sum_{|\alpha|=0}^\infty   \sum_{k\in \mathbb{Z}}(a \ast b)_{\alpha,k}e^{\im k \omega t} \sigma^\alpha,
\]
where the Fourier-Taylor coefficients are given by the Cauchy-convolution products 
\[
(a \ast b)_{\alpha,k} =  \sum_{\substack{ \alpha_1 +\alpha_2 =\alpha \\ \alpha_1,\alpha_2 \in \mathbb{N}^n}} 
\sum_{\substack{ k_1 +k_2 =k \\ k_1,k_2 \in \mathbb{Z}}}
a_{\alpha_1,k_1}\cdot b_{\alpha_2,k_2}.
\]
We refer to $\ast$ as the Cauchy-convolution product of $a$ and $b$.  
\end{definition}  

The definition extends also to higher order powers.    For example 
\[
g^3(t, \sigma) = \sum_{|\alpha| = 0}^\infty \sum_{k \in \mathbb{Z}} (a * a * a)_{\alpha, k} e^{i k \omega t} \sigma^{\alpha}, 
\]
where 
\[
(a * a * a)_{\alpha, k} =  \sum_{\substack{ \alpha_1 +\alpha_2 + \alpha_3 =\alpha \\ \alpha_1,\alpha_2, \alpha_3 \in \mathbb{N}^n}} 
\sum_{\substack{ k_1 +k_2 + k_3=k \\ k_1,k_2, k_3 \in \mathbb{Z}}}
a_{\alpha_1,k_1}\cdot a_{\alpha_2,k_2}  \cdot a_{\alpha_3, k_3}.
\]
Quartic and quintic powers are defined in the analogous way.

We now look for the Fourier-Taylor coefficients of the stable (or unstable)
manifold parameterization, which we write as
\[
P(\theta, \sigma) = \sum_{|\alpha| = 0}^\infty \sum_{k \in \mathbb{Z}} a_{\alpha, k} e^{i \omega k \theta} \sigma^\alpha, 
\]
where 
\[
a_{\alpha, k} = 
\left(
\begin{array}{c}
a^1_{\alpha, k} \\
a^2_{\alpha, k} \\
a^3_{\alpha, k} \\
a^4_{\alpha, k} \\
a^5_{\alpha, k} \\
a^6_{\alpha, k} \\
a^7_{\alpha, k} \\
a^8_{\alpha, k} \\
a^9_{\alpha, k} 
\end{array}
\right) \in \mathbb{C}^9,
\]
for each $\alpha \in \mathbb{N}^n$ and $k \in \mathbb{Z}$.
Observe that $\{a_{0, k}\}_{k \in \mathbb{Z}}$ 
and $\{a_{e_j, k}\}_{k \in \mathbb{Z}}$ are the Fourier coefficients of the periodic orbit
and the $j$-th normal bundle respectively.   

After rewriting the CRFBP as a polynomial system (see again  Appendix \ref{sec:autoDiff})
and projecting Equation \eqref{eq:conjugacyalpha} for the resulting polynomial field 
into Fourier-Taylor coefficient space, we obtain for each $|\alpha| \geq 2$ an equivalent 
$F_\alpha(a) = 0$ problem, where $F_\alpha$ is the map given by 
\begin{align*}
F_{\alpha,k}^{1}(a)=  &(\im\omega k +\langle \alpha,\lambda \rangle)a_{\alpha,k}^{1} -a_{\alpha,k}^{2}, \\
F_{\alpha,k}^{2}(a)=  & (\im\omega k +\langle \alpha,\lambda \rangle)a_{\alpha,k}^{2} -2a_{\alpha,k}^{4} -a_{\alpha,k}^{1} +\sum_{i=1}^3 m_i\left((a^{1}-x_i)\ast a^{6+i} \ast a^{6+i}\ast a^{6+i}\right)_{\alpha,k},  \\ 
F_{\alpha,k}^{3}(a)=  &(\im\omega k +\langle \alpha,\lambda \rangle)a_{\alpha,k}^{3} -a_{\alpha,k}^{4}, \\
F_{\alpha,k}^{4}(a)= & (\im\omega k +\langle \alpha,\lambda \rangle)a_{\alpha,k}^{4} +2a_{\alpha,k}^{2} -a_{\alpha,k}^{3} +\sum_{i=1}^3 m_i\left((a^{3}-y_i)\ast a^{6+i}\ast a^{6+i}\ast a^{6+i}\right)_{\alpha,k}, \\
F_{\alpha,k}^{5}(a)= &(\im\omega k +\langle \alpha,\lambda \rangle)a_{\alpha,k}^{5} -a_{\alpha,k}^{6},  \\
F_{\alpha,k}^{6}(a)= &(\im\omega k +\langle \alpha,\lambda \rangle)a_{\alpha,k}^{6} + \sum_{i=1}^3 m_i\left((a^{5}-z_i)\ast a^{6+i}\ast a^{6+i} \ast a^{6+i}\right)_{\alpha,k}, \\
F_{\alpha,k}^{7}(a)= & (\im\omega k +\langle \alpha,\lambda \rangle)a_{\alpha,k}^{7} +\left( \left((a^{1}-x_1)\ast a^2 +(a^3-y_1)\ast a^4 +(a^5-z_1)\ast a^6\right) \ast a^7 \ast a^7 \ast a^7 \right)_{\alpha,k}, \\
F_{\alpha,k}^{8}(a)= & (\im\omega k +\langle \alpha,\lambda \rangle)a_{\alpha,k}^{8} +\left( \left((a^{1}-x_2)\ast a^2 +(a^3-y_2)\ast a^4 +(a^5-z_2)\ast a^6\right)\ast a^8 \ast a^8 \ast a^8 \right)_{\alpha,k}, \\
F_{\alpha,k}^{9}(a)= & (\im\omega k +\langle \alpha,\lambda \rangle)a_{\alpha,k}^{9} +\left( \left((a^{1}-x_3)\ast a^2 +(a^3-y_3)\ast a^4 +(a^5-z_3)\ast a^6\right) \ast a^9 \ast a^9 \ast a^9 \right)_{\alpha,k},\\
\end{align*}
Here $(x_i,y_i,z_i)$ for $i = 1,2,3$ denote the coordinates of the primaries.
%as the Fourier-Taylor coefficients whose only non zero coefficient is equal to the desired constant at $|\alpha|=k=0$. 

Choose a Taylor truncation order $N \geq 2$ and recursively solve the equations $F_{\alpha}(A) = 0$ for each $2 \leq |\alpha| \leq N$
using Newton's method.  Let $\left\{\overline{a}_{\alpha,k}^j\right\}_{|\alpha \leq N, |k| < K}$, $1 \leq j \leq 9$ denote the resulting 
numerically computed approximate solutions. This results in the polynomial approximation 
\[
P^{(N,K)}(\theta,\sigma) = \sum_{|\alpha|=0}^N \sum_{|k|<K} \bar{a}_{\alpha,k}e^{\im \omega k \theta}\sigma^\alpha
\]
of the desired stable (unstable) manifold parameterization.

%\begin{remark}[Contraction mapping argument for error bound]{\em
%It is possible to use a-posteriori analysis in conjunction with interval arithmetic
%to find a mathematically rigorous bound on the truncation error for each 
%$A_\alpha(\theta)$, with $|\alpha|\leq N$. That is, to compute a sequence $r_0,r_1,\hdots,r_N$ such that for 
%\[
%\bar A_\alpha(\theta) =\sum_{|k|<K} \bar a_{\alpha,k}e^{\im \omega k\theta},
%\]
%then
%\[
%\left\|\bar A_\alpha - A_\alpha \right\|_\infty \leq r_{|\alpha|}.
%\]
%Then for higher dimensional Taylor coefficients another contraction mapping argument can be made. 
%
%This is the 
%object of further work by the authors.
%}
%\end{remark}

\begin{remark}[Symmetry in the case of complex conjugate eigenvalues]\label{remark:symmetry}
{\em We are interested in the case $n=2$ with $\lambda_{1,2}=a\pm \im b$. The parameterization will 
have a complex coefficients/image, however one can check that the Taylor coefficients have the symmetry
\[
A_{\alpha_1, \alpha_2}(\theta) = \overline{A_{\alpha_2,\alpha_1}(\theta)},~~\forall (\alpha_1, \alpha_2) \in \mathbb{N}^2.
\]
Indeed this follows directly from the complex conjugate symmetry of Equation  \eqref{eq:conjugacyalpha}.
So, for $\sigma= (\sigma_1,\sigma_2) \in \mathbb{R}^2$ we define 
$\hat{P}(\theta,\sigma)= P(\theta,\sigma_1+\im \sigma_2,\sigma_1-\im\sigma_2)$ and have that 
the image of $\hat{P}$ is real thanks to the symmetry above. 
Since we are studying a real vector field we are ultimately interested in only real image of the 
parameterization, and in future applications of the method we 
always use the complex conjugate variables just discussed.
We also note that the symmetry is further inherited by the Fourier coefficients. 
That is, for all $k\in \mathbb{Z}$ and for any multi-index $\alpha=(\alpha_1,\alpha_2)$ 
we define $\beta= (\alpha_2,\alpha_1)$.  It follows that
\[
a_{\alpha,k} = \bar{a}_{\beta,-k}.
\]
In particular the coefficients are real when $k=0$. One can use this fact to reduce the 
computation time as it follows that one needs only to compute half of the coefficients 
to determine the parameterization.}
\end{remark}

\subsection{Numerical examples}
We now return to the vertical Lyapunov families of periodic orbits at $\mathcal{L}_0$ and $\mathcal{L}_{4,5,6}$,
which for small out of plane amplitudes are insured to have complex conjugate Floquet exponents.
Indeed, we find that the orbits have the desired stability for fairly substantial out of plane amplitudes as well,
see the tables in Appendix B. For example, 
Figure \ref{fig:ManifoldExample} illustrates a periodic orbit at $\mathcal{L}_0$ with non-zero Floquet exponents of approximately  
$\pm 1.2744 \pm 0.8356\im$, so that it is possible to compute a three dimension manifold attached to the orbit. This manifold satisfies 
the symmetries previously stated and we focus on its real image. 
%The parameterization is later used to compute the connecting orbit 
%displayed in Figure \ref{fig:L0spacial}. 
To simplify the MATLAB codes, we did not exploit the symmetries of the problem to reduce the dimension and thus solved the 
homological equations for all $\alpha$ up to order $5$.   This results in an approximate parameterization with 
$7,371$ non-zero Fourier-Taylor coefficients. To test the accuracy of the approximation we exploit
 the conjugacy relation as follows. We use numerical integration to evaluate
\[
E(P^{(N,K)},\theta_0,\sigma_0,t)= \left\| \Phi(P^{(N,K)}(\theta_0, \sigma_0), t) 
- P^{(N,K)}\left(\theta_0 + t, e^{\Lambda t} \sigma_0 \right) \right\|,
\]
where we recall that
\[
\Lambda= \begin{pmatrix}
\lambda_1 & 0 \\ 0 & \lambda_2 \\
\end{pmatrix}.
\]
To obtain best possible accuracy, we first fix the scale of the eigenvector and then choose the Taylor order so that the 
last coefficients have norm close to machine precision. For the manifold previously described and displayed in Figure 
\ref{fig:L0spacial}, we sample points on the boundary of the parameterization and approximate the error $E$ at those 
points using various integration time. We take initial values evenly distributed on the boundary of the domain of the 
parameterization, writing $(\theta,e^{\im \sigma},e^{-\im\sigma})$ with $(\theta,\sigma) \in [0,\tau] \times [0,2\pi]$. 
We approximated the error with the given stable manifold for $100$ points in this domain
 and obtained the following error approximation
\begin{align*}
\max_{1\leq i \leq 100} E(P^{(5,20)},\theta_i,\sigma_i,10^{-10}) &= 9.6467\cdot 10^{-11}, \\
\max_{1\leq i \leq 100} E(P^{(5,20)},\theta_i,\sigma_i,10^{-8}) &= 9.6475\cdot 10^{-11}, \\
\max_{1\leq i \leq 100} E(P^{(5,20)},\theta_i,\sigma_i,10^{-6}) &= 9.7219\cdot 10^{-11}, \\
\max_{1\leq i \leq 100} E(P^{(5,20)},\theta_i,\sigma_i,10^{-4}) &= 2.3987\cdot 10^{-9}, \\
\max_{1\leq i \leq 100} E(P^{(5,20)},\theta_i,\sigma_i,10^{-2}) &= 2.3055\cdot 10^{-7}. \\
\end{align*} 

See also Figures \ref{fig:homoclinicL5} and \ref{fig:L0unstable}
for other graphical illustrations of the results obtained using the parameterization method
for vertical Lyapunov orbits in the CRFBP.

\begin{figure}[t!]
\includegraphics[width=5.0in]{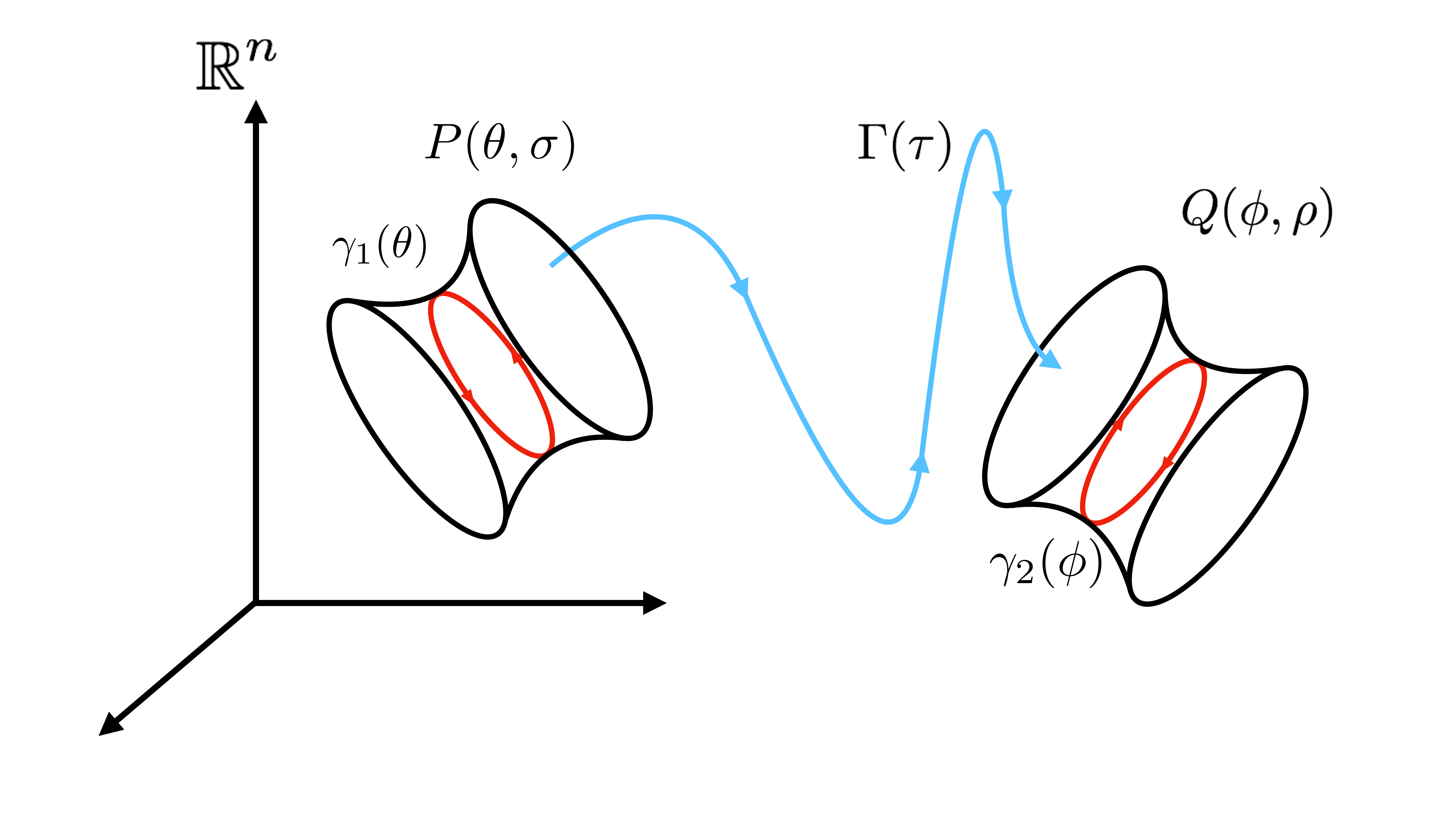} 
\caption{\textbf{Cycle-to-cycle connection:} $\gamma_1$ and $\gamma_2$ are periodic 
orbits with $P$ and $Q$ parameterizations of their local unstable and stable manifolds
respectively.  A homoclinic connection is equivalent to an orbit segment $\Gamma(\tau)$
beginning on the image of $P$ and terminating after time $T$ on the image of $Q$.  The equivalence is 
formalized as a two point boundary value problem in Equation \eqref{eq:bvp_1}. }
\label{fig:connectionBVP}
\end{figure}

\section{Cycle-to-cycle connections} \label{sec:connections}
To find the connection, we use the stable and unstable manifold parameterizations developed in the 
previous section to formulate a two point boundary value problem for a heteroclinic/homoclinic connecting orbit
asymptotic to a periodic solution of the spatial CRFBP. In the applications we consider the periodic orbit will be a member of one of the vertical Lyapunov 
families discussed in Section \ref{sec:lyapFamilies}.

For a connection to exist, the manifolds do not need to intersect transversely
in the full phase space, but rather in the energy manifold. 
Recall that the spatial CRFBP conserves the Jacobi integral, so that a trajectory $u(t)\in\rr^6$ 
solving $\dot u = f(u)$, with $f$ as in Equation \eqref{eq:FirstOrderOriginal},
must lie in a level set of the function
\[
J(u) = u_1^2 +u_3^2 +2\left(\frac{m_1}{r_1(u)} +\frac{m_2}{r_2(u)} +\frac{m_3}{r_3(u)}\right)  -(u_2^2 +u_4^2 +u_6^2).
\]
So, for a given periodic orbit $\gamma(t)$ there is a $K \in \mathbb{R}$ so that $K= J(\gamma(t))$ for all $t$. 
In fact we can find $K$ by choosing any $t_0 \in \mathbb{R}$ and evaluating 
\[
J(\gamma(t_0)) = K.
\]
Define
\[
\mathcal{K} = \left\{ u \in \mathbb{R}^6 : J(u)= K \right\},
\]
and note that $\mathcal{K}$ is locally a five dimensional manifold.

Consider the case where $\gamma$ has two stable and two unstable Floquet exponents, so that 
$W^{s,u}(\gamma)$ are three dimensional invariant manifolds.
From the continuity of $J$ it follows that 
$W^s(\gamma),W^u(\gamma) \subset \mathcal{K}$. Since $\mathcal{K}$ is five dimensional it is possible 
that a pair of three dimensional submanifolds can intersect transversely relative to $\mathcal{K}$.
It is highly unlikely that the images of the local stable/unstable manifold parameterizations intersect
except at $\gamma$, and it is necessary to look for a point on the local unstable manifold which is on 
local stable manifold at some later time.

To formalize the discussion let $\gamma_1, \gamma_2 \colon \mathbb{R} \to \mathbb{R}^6$ be periodic 
orbits with periods $T_1, T_2 > 0$ respectively.  Suppose that $J(\gamma_1(t)) = J(\gamma_2(t))$ 
(note that this condition is automatically satisfied if $\gamma_1 = \gamma_2$ -- the case of a 
homoclinic connection). 
Let $P, Q\colon \mathbb{R} \times B \to \mathbb{R}^6$ denote local unstable and stable manifold 
parameterizations respectively, where $B$ is the unit disk in the plane.
We seek $T > 0$, $\theta_0, \phi_0 \in \mathbb{R}$, $\sigma_0, \rho_0 \in B$,
and a function $\Gamma \colon [0, T] \to \mathbb{R}^6$ so that 
\begin{align}\label{eq:bvp_1}
\begin{cases}
\dot \Gamma (t) = f(\Gamma(t)), & \forall t \in (0,T) \\
\Gamma(0)= P(\theta_0,\sigma_0), &\theta_0, \in [0, T_1],  \sigma_0 \in B \\
\Gamma(T)= Q(\phi_0,\rho_0), & \phi_0 \in [0, T_2], \rho_0 \in B.
\end{cases}
\end{align}
That is , we seek an orbit segment $\Gamma$ starting
 in the image of the local unstable manifold parameterization
 and ending at a point in the image of the local stable manifold.   
 The boundary conditions ensure that the orbit accumulates to the periodic orbit(s) 
 in forward and backward time thanks to the conjugacy relation \eqref{eq:flowConj}.
We observe however that solutions of the above system are not isolated, as 
if $\Gamma \colon [0, T] \to \mathbb{R}^6$ is one solution we obtain a
continuous family of other solutions $\Gamma_\tau \colon [0, T] \to \mathbb{R}^6$ 
by 
\[
\Gamma_\tau(t) = \Phi(\Gamma(t), \tau), 
\]
for any $|\tau| \ll 1$.

To isolate a solution we fix $\sigma_0, \phi_0$ to have length $R_1, R_2 \leq 1$
respectively.  This is 
equivalent to asking that the connecting orbit segment starts and finishes 
on a particular boundary torus of the local stable/unstable manifold, and 
this constraint removes the infinitesimal shift so that we have isolation.
To make this restriction explicit, we write 
\begin{align}\label{eq:preZero}
\begin{cases}
\dot \Gamma (t) = f(\Gamma(t)), & \forall t \in (0,T) \\
\Gamma(0)= P(\theta, R_1 \cos(\alpha),  R_1\sin(\alpha)), &\theta, \in [0, T_1],  \alpha \in [0, 2 \pi] \\
\Gamma(T)= Q(\phi, R_2 \cos(\beta), R_2 \sin(\beta)), & \phi \in [0, T_2], \beta \in [0, 2 \pi],
\end{cases}
\end{align}
where we remark that $R_1, R_2$ are not variables but fixed constants.
This is rewritten as a zero finding problem for $G \colon \mathbb{R}^5 \to \mathbb{R}^6$.
\begin{equation} \label{eq:zeroEq1}
G(T, \theta, \phi, \alpha, \beta) = \Phi(P(\theta, R_1 \cos(\alpha), R_1 \sin(\alpha)), T) - 
Q(\phi, R_2\cos(\beta), R_2\sin(\beta)), 
\end{equation}
where $\Phi$ is the flow generated by $f$.
While a zero of the system is isolated, we do not have a balanced system of 
equations hence cannot apply Newton's method. To balance the system we drop
any of the three components of the velocity. The choice depends on the trajectory of interest. Denote by $\hat{\Phi}$ and $\hat{Q}$ the flow and the local stable manifold parameterization
each with (for example) the sixth component omitted.  Then 
we define $\hat{G} \colon \mathbb{R}^5 \to \mathbb{R}^5$ by 
\begin{equation} \label{eq:zeroEq2}
\hat{G}(T, \theta, \phi, \alpha, \beta) = \hat{\Phi}(P(\theta, \cos(\alpha), \sin(\alpha)), T) - 
\hat{Q}(\phi, \cos(\beta), \sin(\beta)), 
\end{equation}
and note that Newton's method can be used to solve the problem.

Of course the flow $\Phi$ is only implicitly defined by the vector field $f$.  
We obtain an explicit zero finding problem as follows.  In anticipation of the 
discretization of the function spaces to follow we rescale time so that 
the orbit segment is on the image of $P$ at time $t = -1$ and on the 
image of $Q$ at time $t = 1$.
Define $\mathcal{F} \colon C([-1, 1], \mathbb{R}^5)
\to C([-1, 1], \mathbb{R}^5)$ by 
\begin{equation} \label{eq:connectingOrbit_functionalEquation}
\mathcal{F}(\Gamma, T, \theta, \phi, \alpha, \beta)(t) = 
\left(
\begin{array}{c}
\Gamma(t) 
- P(\theta, \cos(\alpha), \sin(\alpha)) - \frac{T}{2} \int_{-1}^t f(\Gamma(s)) \, ds \\
\hat{\Gamma}(1) - \hat{Q}(\phi, \cos(\beta), \sin(\beta))
\end{array}
\right).
\end{equation}
In practice we apply
Newton's method to $\mathcal{F}$ after discretizing $\Gamma$ using 
Chebyshev series as discussed in the 
next section.  Another technical detail is that since multiplication of 
Chebyshev series can be thought of 
as multiplication of cosine series, it is once again advantageous to work with the 
polynomial field discussed in the Appendix.

\subsection{Chebyshev discretization of the BVP}

After a translation and a rescaling of time, the solution of \eqref{eq:connectingOrbit_functionalEquation} 
is defined on $[-1,1]$ and therefore can be expressed using Chebyshev series for all nine 
component. As previously mentioned the use of Chebyshev expansion is well detailed in the 
literature and will lead to an operator defined on infinite sequences of coefficients 
which is similar to the definition from section \ref{sec:Manifold}. 

\begin{remark}\label{Remark:Chebyshev}
{\em
This approach, based on Chebyshev approximation, allow the use of a contraction mapping 
argument to validate the approximation. Such approach is already well known and had been
 the object of several studies. The interested reader can see for 
 example \cite{MR3392421,MR3207723,LessardReinhardt,MR3353132,paperBridge,RayJB}. }
\end{remark}

\begin{definition}\label{def:Chebyshev}
 Let $T_k:[-1,1] \to \mathbb{R}$ denotes the Chebyshev polynomials. 
 They satisfy the recurrence relation $T_0(t)=1$, $T_1(t)=t$ and
\[
 T_{k+1}(t)=2t T_k(t) - T_{k-1}(t),~ \forall k\geq 1.
\]
An analytic function $f:[-1,1] \to \mathbb{R}$ can be expressed uniquely as
\[
f(t) = a_0 + 2\sum_{k=1}^\infty a_k T_k(t),
\]
and it follows that the decay of the coefficients is exponential. 
Thus, the function $f$ is represented uniquely as an infinite sequence representing the coefficients of a Chebyshev expansion.
\end{definition}

It is possible to rewrite the solution as the zero of a well chosen infinite-dimensional operator defined on the space of Chebyshev coefficients
equivalent to the functional operator given by \eqref{eq:connectingOrbit_functionalEquation}. More detail about the rewriting of the 
problem can be found in the literature listed in Remark \ref{Remark:Chebyshev}. Let
\[
y= (L,\theta,\alpha,\phi, \beta, a^1,\hdots,a^9)
\]
where $L$ is the half-period ($L= \frac{T}{2}$), the pairs $\theta, \alpha$ and $\phi,\beta$ are coordinates for the unstable and stable parameterization 
of the boundary tori respectively, and $a^i $ are the coefficients of the Chebyshev expansion of each component of the solution. So that $y$ denotes the set of unknowns of the problem. We set
\begin{equation}\label{eq:OperatorBVP}
\mathcal{F}(y)= \left( \eta^1(y),\hdots,\eta^5(y),G^1(y),\hdots,G^9(y) \right),
\end{equation}
where each $\eta^i$ is a scalar equation arising from the rewriting 
the second line in \eqref{eq:connectingOrbit_functionalEquation}. 
We stress that each
$G^i$ is an infinite-dimensional equation to solve for the Chebyshev coefficients. 
The maps are explicitly defined as
\[
\eta^i(x)= \left(a_0^i +2\sum_{k=1}^\infty a_k^i \right) - P^i(\theta, R_1 \cos(\alpha), R_1 \sin(\alpha)),
\]
and
\[
G_k^i(y) =
\begin{cases}
 \left(a_0^i +2\sum_{j=1}^\infty (-1)^ja_j^i \right) - Q^i(\phi,  R_2 \cos(\beta), R_2 \sin(\beta)), & k=0, \\
 ka_k^i -L(F(a))_{k\pm1}^i, & k\geq 1,
\end{cases}
\]
where $(F(a))_{k\pm1}^i = (F(a))_{k+1}^i - (F(a))_{k-1}^i $. Each $F^i$ are similar to the case of the 
Fourier-Taylor parameterization of the manifold, and they are explicitly given by
\begin{align*}
F_{k}^{1}(a)=  &a_{k}^{2}, \\
F_{k}^{2}(a)=  & 2a_{k}^{4} +a_{k}^{1} -\sum_{i=1}^3 m_i\left((a^{1}-x_i)\star a^{6+i} \star a^{6+i}\star a^{6+i}\right)_{k},  \\ 
F_{k}^{3}(a)=  &a_{k}^{4}, \\
F_{k}^{4}(a)= & -2a_{k}^{2} +a_{k}^{3} -\sum_{i=1}^3 m_i\left((a^{3}-y_i)\star a^{6+i}\star a^{6+i}\star a^{6+i}\right)_{k}, \\
F_{k}^{5}(a)= &a_{k}^{6},  \\
F_{k}^{6}(a)= & -\sum_{i=1}^3 m_i\left((a^{5}-z_i)\star a^{6+i}\star a^{6+i} \star a^{6+i}\right)_{k}, \\
F_{k}^{7}(a)= & -\left( \left((a^{1}-x_1)\star a^2 +(a^3-y_1)\star a^4 +(a^5-z_1)\star a^6\right) \star a^7 \star a^7 \star a^7 \right)_{k}, \\
F_{k}^{8}(a)= & -\left( \left((a^{1}-x_2)\star a^2 +(a^3-y_2)\star a^4 +(a^5-z_2)\star a^6\right)\star a^8 \star a^8 \star a^8 \right)_{k}, \\
F_{k}^{9}(a)= &-\left( \left((a^{1}-x_3)\star a^2 +(a^3-y_3)\star a^4 +(a^5-z_3)\star a^6\right) \star a^9 \star a^9 \star a^9 \right)_{k},\\
\end{align*}
 where $\star$ denotes the convolution product. That is for $b= \left\{ b_k \right\}_{k=0}^\infty$ and $c= \left\{c_k \right\}_{k=0}^\infty$ two sequence of Chebyshev coefficients
 \[
 (b \star c)_{k} = \sum_{ \substack{ k_1+k_2 = k \\ k_1,k_2\in \mathbb{Z}}} b_{|k_1|}c_{|k_2|}.
 \]
Again, the coordinates of the primaries are written as Chebyshev series to simplify the presentation, the Chebyshev expansion of a constant being the constant itself as the first term and zeros for all the remaining coeffiients.

\begin{remark}[Rewriting of the problem]
Note that both sums in the definition of $\mathcal{F}$ arise from the evaluation of the trajectory $\Gamma$ at its endpoint as well as the fact that for all $k\geq 0$
\[
T_k(-1)= (-1)^k, \quad \mbox{and} \quad T_k(1)= 1.
\]
Moreover, the tridiagonal structure of each operator  $G^i$ arise from the fact that for all $k\geq 2$
\[
\int T_k(t)dt= \frac{1}{2} \left( \frac{T_{k+1}(t)}{k+1} -\frac{T_{k-1}(t)}{k-1} \right).
\]
To obtain the desired operator, one must use the integration formula for Chebyshev polynomials, simplify and then regroup matching coefficients.
For more details, we refer again to the literature in Remark \ref{Remark:Chebyshev}.
\end{remark}

\begin{remark}[domain subdivision]\label{rem:division}
For large values of $T$ the Chebyshev coefficients will decay slower and the finite dimensional approximation can loose accuracy. While one can use a higher dimensional approximation it is often more efficient to divide the domain. We will exhibit how one can split the domain in half, this process can be repeated to divide the domain into as many pieces as desired. Recall that the original problem is to find $\Gamma: [0,T] \to \mathbb{R}^6$ satisfying \eqref{eq:preZero}. This problem is completely equivalent to the following two boundary problems. Let $0 < \tilde T < T$ and seek a pair of function $\Gamma_1:[0,\tilde T] \to \mathbb{R}^6$, $\Gamma_2:[\tilde T, T] \to \mathbb{R}^6$ satisfying
\begin{align*}
\begin{cases}
\dot \Gamma_1 (t) = f(\Gamma_1(t)), & \forall t \in (0,\tilde T) \\
\Gamma_1(0)= P(\theta, R_1 \cos(\alpha),  R_1\sin(\alpha)), &\theta, \in [0, T_1],  \alpha \in [0, 2 \pi] \\
\Gamma_1(\tilde T)= \Gamma_2(\tilde T),
\end{cases}
\end{align*}
and
\begin{align*}
\begin{cases}
\dot \Gamma_2 (t) = f(\Gamma_2(t)), & \forall t \in (\tilde T, T) \\
\Gamma_2(\tilde T)= \Gamma_1(\tilde T) , \\
\Gamma_2(T)= Q(\phi, R_2 \cos(\beta), R_2 \sin(\beta)), & \phi \in [0, T_2], \beta \in [0, 2 \pi],
\end{cases}
\end{align*}
Note that $\Gamma_1$,$\Gamma_2$ are restriction of the original trajectory to smaller time. In order to construct a two point boundary value problem for each piece we use the fact that $\Gamma$ is continuous so that the restrictions must match at the transition point. A natural choice of transition point is to set $\tilde T = \frac{T}{2}$, however the accuracy of the solution can sometime be improved using a nonuniform mesh.

Both subdomains are then transformed into $[-1,1]$ and expanded using  Chebyshev series.
 Using two Chebyshev expansions would double the total number of variables in the problem, 
 although the gain in the decay rate of each sequence often allows to reduce the projection of the individual Chebyshev expansion,
 resulting in the use of fewer total modes.
\end{remark}

To find a connection, we apply Newton's method to a finite dimensional projection of the problem. 
Given a pair of manifold one can compute an approximate zero of the operator and then use 
definition \ref{def:Chebyshev} to display the approximate connection. 
%As previously mentioned, intersection of the two parameterization is unlikely, thus we apply the following algorithm to find potential candidates.
Below is a sketch of the procedure.

\begin{enumerate}
	\item Pick two periodic orbit $\gamma_1(t)$, $\gamma_2(t)$ and verify that 
	$J(\gamma_1(t))=J(\gamma_2(t))$ if the orbits are distinct.
	\item Verify that both periodic orbit have the desired stability. Fix a scale for the 
	tangent bundles as well as the desired dimension for the finite dimension approximation 
	in both the Fourier and Taylor direction. Note that greater values for the scale of the bundle 
	will require a higher choice for the Taylor direction to maintain sufficient accuracy, but it will 
	also reduces the integration time required to find connecting orbits.
	\item Compute $P(\theta, \sigma)$ the parameterization of the local unstable manifold attached 
	to $\gamma_1(t)$ and $Q(\phi,\rho)$ the parameterization of the local stable manifold 
	attached to $\gamma_2(t)$.
	\item Define the following positive constants $d_{\mbox{max}}$, $\Delta t$, $T_{\mbox{max}}$ 
	and construct a triangulation of the boundary of both manifold such that the average length of 
	the edges of every triangle is less than $d_{\mbox{max}}$. Note that the boundary of the 
	manifold is given by the case $R_1= R_2=1$. Denote by 
	$\mathcal{T}_0^u = \left\{ P(\theta_i,\cos(\alpha_i), \sin(\alpha_i)) : i \in \mathcal{I} \right\}$ the set of vertex 
	of the triangulation. Similarly $\mathcal{T}_0^s$ will denote the case associated to the stable manifold.
	\item For every $p \in \mathcal{T}_0^u$, use numerical integration to obtain $\Phi(p,\Delta t)$ and use
	 the resulting point to define $\mathcal{T}_{t_1}^u$, where $t_1 = t_0 +\Delta t = 0 +\Delta t$. Refine the mesh by subdividing triangle with 
	 average edge length greater than $d_{\mbox{max}}$. To subdivide an edge, note that at this step
	  the two vertices are given by $\Phi(P(\theta_1, \cos(\alpha_1), \sin(\alpha_1)),t_1)$ and 
	  $\Phi(P(\theta_2,\cos(\alpha_2), \sin(\alpha_2)),t_1)$, we approximate the midpoint of
	   the edge by taking the image of the 
	  midpoint in parameter space. That is, we take $\Phi(P(\theta_3,\cos(\alpha_3), \sin(\alpha_3)),t_1)$ with 
	  $\theta_3= \frac{ \theta_1 + \theta_2}{2}$ and $\alpha_3 = \frac{\alpha_1 + \alpha_2}{2}$.
	   Similarly, we compute and refine the set $\mathcal{T}_{t_1}^s$, but in this case by integrating 
	   numerically backwards in time.
	\item Find the pair minimizing the distance between the two set. That is $(\theta,\alpha)$ 
	and $(\phi,\beta)$ such that 
	\[
	\left\|  \Phi(P((\theta,R_1 \cos(\alpha), \sin(\alpha),t_1) -\Phi(Q((\phi,R_2 \cos(\beta), \sin(\beta)),-t_1)  \right\|
	\]
	is minimal. If the minimum distance is sufficiently small, then set $L=t_1$ and the algorithm 
	provides an initial guess for the use of Newton's method to obtain an approximate zero of
	 the operator $\mathcal{F}$ given in \eqref{eq:OperatorBVP}.
	\item While $t_n<T_{\mbox{max}}$, repeat Step $5$ to define $\mathcal{T}_{t_{n+1}}^u$ and 
	$\mathcal{T}_{t_{n+1}}^s$. Then, repeat Step $6$ to obtain the candidate and test the existence 
	of a nearby approximate zero using Newton's method.
\end{enumerate}

\begin{remark}[The case of collision]
{
It is possible for some points in the triangulation to reach a collision, such occurrence make the size 
of the sets $\mathcal{T}_{t_{n}}^u$ and $\mathcal{T}_{t_{n}}^s$ grow considerably. In the present work  
we reject such occurrences by adding the following constraint to
 step $5$ of the algorithm. Let $v_{\mbox{max}}$ and $d_{\mbox{lib}}$ be positive constants and let 
 $\mathcal{L}_i$ denote the libration point shadowed by the periodic orbit $\gamma_1(t)$. Reject all 
 points $p$ of $\mathcal{T}_{t_{n}}^u$ and $\mathcal{T}_{t_{n}}^s$ such that 
 $\sqrt{ p_2^2 +p_4^2 +p_6^2} > v_{\mbox{max}}$ or $\left\| p - \mathcal{L}_i \right\|> d_{\mbox{lib}}$. 
 The first condition rejects collisions since one can easily notice that any trajectory approaching one 
 of the primaries will have large velocity. The second condition rejects trajectories escaping a chosen 
 neighborhood of the libration points, with such a criteria we note that the algorithm cannot find 
 connecting orbit with really large flying time. The choice of $v_{\mbox{max}}$ and $d_{\mbox{lib}}$
  is guided by a priori simulation of the system and the intent to speed up the algorithm as much 
  as possible.}
\end{remark}

\begin{remark}
{
The algorithm is useful to determine the length of the shortest existing connection in a specific 
case but its accuracy is highly dependent on the values of the constants $d_{\mbox{max}}$, 
$\Delta t$ as well as the accuracy of the ODE solver used to numerically integrate the problem. 
A more thorough study could be provided by a generalization of the approach used in the planar 
case. That work is explained with more details in \cite{MR3919451}. The generalization requires 
to change the basis for the periodic direction from Fourier to Chebyshev approximation, this choice 
of basis is the object of \cite{chebManifolds}. This extension is the subject of 
work in preparation by the authors.}
\end{remark}

\subsection{Numerical Example}

An example of the results obtained in the spatial CRFBP using this procedure is illustrated 
in Figure \ref{fig:BVP}.  Here we computed the local stable/unstable manifold parameterizations 
to polynomial order $5$ taking $20$ Fourier coefficients to represent each of the Taylor 
coefficients (including the periodic orbit and normal bundles). The trajectory $\Gamma(t)$ is 
represented using two distinct Chebyshev expansion, each expansion has $50$ coefficients 
for a total number of $905$ unknowns to use Newton's method. The total time  of flight of the connecting orbit 
is
\[
T= 3.4698
\]
and both Chebyshev problem have equal time, that is $\tilde T = \frac{T}{2}$ using the technique from 
Remark \ref{rem:division}. Let $\bar{\mathcal{F}}$ denote the finite dimensional projection of $\mathcal{F}$, 
with dimension $905$ in this case, and $\bar y$ denote the numerical approximation. Newton's method provided 
an approximation with defect close to machine precision, that is
\[
\bar{\mathcal{F}}(\bar y) \approx 10^{-15}.
\]
For the computation the fourth component was dropped and after a posteriori verification it was validated that both 
component are equal. The flying time depends on the scale of the manifolds, in that specific case the scale chosen
 is $0.1$, so that the tangent bundle at initial time have length of approximatively $0.1$. This condition is not applied 
 exactly and instead approximated in Fourier coefficients by
\[
\sum_{i=1}^9 \sum_{|k|< k_0}\left( a_{\alpha,k}^{(i)}\right)^2 \approx 1,
\]
where $\alpha= (1,0)$ or $(0,1)$. This condition was applied with $k_0=5$. We remark that this specific connecting orbit
 reached a maximum velocity of approximatively $1.81$ and a  
maximum distance from $\mathcal{L}_5$ of $1.1$. 
%So that an application of the algorithm previously seen in this section 
%with $v_{max}$ and $d_{lib}$ greater than those $2$ values respectively would allow to potentially find this connection.  

The connecting orbits in the remainder of the paper are computed using the procedure just 
discussed and the numerical details are similar.  

\begin{figure}
   % figure: BVP in the case of L5, picture made with povray
    \subfigure{{\includegraphics[width=.45\textwidth]{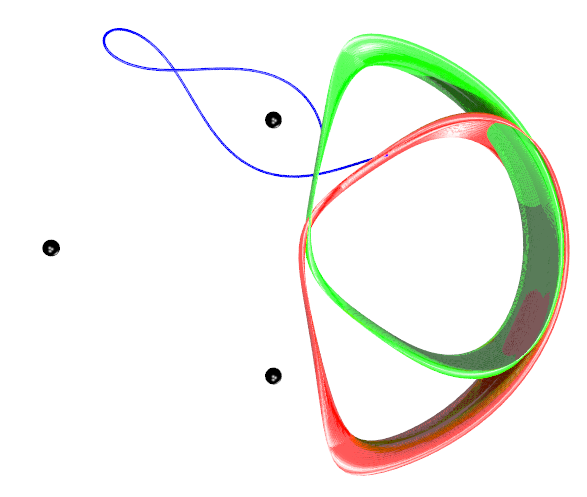}}}
     \subfigure{{\includegraphics[width=.45\textwidth]{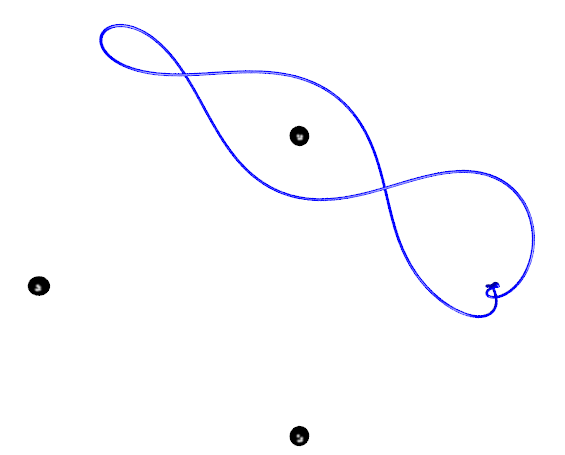} }}
\caption{\textbf{Example -- BVP for a vertical Lyapunov homoclinic in practice:}
(Left) Representation of the boundary value problem for the case of a 
homoclinic connection to a periodic orbit at $\Lc_5$. The boundary torus of the stable 
manifold is represented in green, with the boundary torus of the unstable manifold in red.
The Chebyshev arc in blue. Both surface are displayed using the same map as in 
Figure \ref{fig:ManifoldExample}, 
this time with $R_1=1$ for the unstable case and $R_2=1$ for the stable. 
We remark that the apparent intersection of the local parameterizations 
in the right side of the left frame is due to projection distortions which arise when projecting 
from the six dimensional phase space to the three dimensional configuration space.
(Right) The full 
connecting orbit is recovered using the flow conjugacy relation on the 
local parameterizations, that is the asymptotic behavior is obtained without
integrating the CRFBP.}\label{fig:BVP}
\end{figure}

\section{Results: Homoclinic connections for the vertical Lyapunov families in the CRFBP} \label{sec:results}
We now return to the main goal of the present work, and apply the numerical algorithms developed
in the previous sections to the homoclinic connection problem at $\mathcal{L}_{0,5}$ in 
the CRFBP for mass ratios at or near the triple Copenhagen problem.

See for example the results illustrated in Figure \ref{fig:L0spacial315}.  Here we have taken the 
masses of the primaries to be $m_1 = 0.4, m_2 = 0.35$, and $m_3 = 0.25$, so that the 
$\pm 120$ degree symmetry is broken.  We consider the vertical Lyapunov family at 
$\mathcal{L}_0$ which lies near, but not on the $z$-axis thanks to the broken symmetry. 
As expected we find that the three shortest homoclinic orbits have the shape
predicted by the planar problem.  See for example the orbit in the left frame of 
Figure \ref{fig:homoclinics_L0} in the present work.  See also the top right frame 
of Figure 21 in \cite{MR3919451}, which illustrates the shortest 
planar homoclinics at almost the same parameter values as used here. Note however 
that when we view the orbits from the side in the $x,y,z$ spatial coordinate frame 
we see that the orbits have substantial out of plane amplitude (5-10 percent of the 
$xy$ amplitudes), despite the fact that the 
$xy$ projection fits well with the planar case.

\begin{figure}[!t]
   % \centering
     \subfigure{{\includegraphics[width=.5\textwidth]{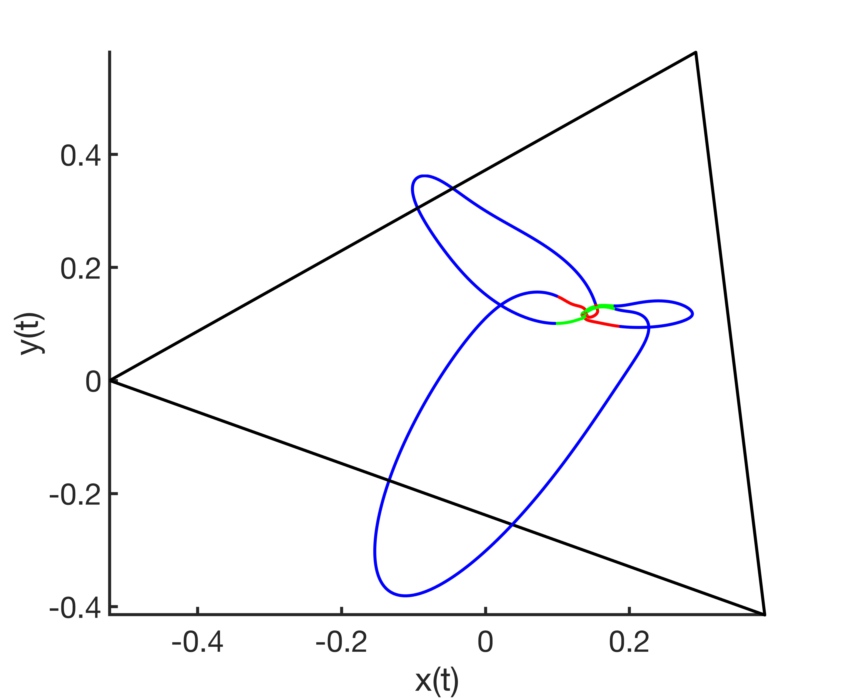} }}
      \subfigure{{\includegraphics[width=.5\textwidth]{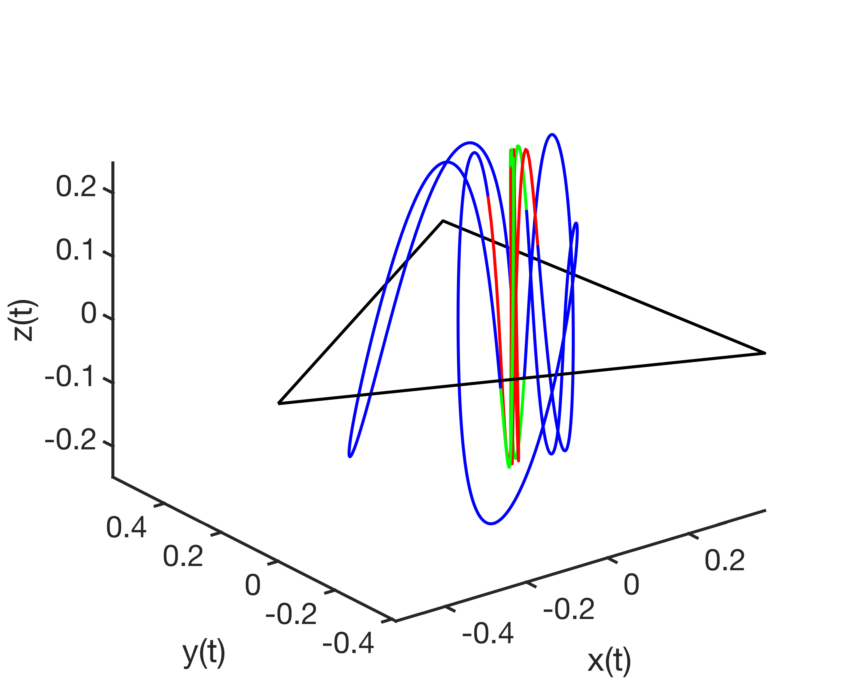}}}
\caption{\textbf{Homoclinic connections to a spatial periodic orbit from the 
vertical Lyapunov family at $\Lc_0$:}  Mass values of $m_1=0.4$ and $m_2=0.35$. 
$J= 3.15$ is the Jacobi constant of the periodic orbit.  Compare the shapes of the orbits in the left 
frame with the planar homoclinic orbits in the  top right frame of Figure 21 in \cite{MR3919451},
or (more loosely) with the shapes of the planar homoclinics in the left frame
of Figure \ref{fig:homoclinics_L0} of the present work.  While the shape of the cycle-to-cycle 
homoclinics are clearly inherited from the shapes of the planar homoclinics,  
the right frame illustrates the out of plane dynamics of the spatial homoclinic
tangle.  
}\label{fig:L0spacial315}
\end{figure}

\begin{figure}[!t]
   % \centering
     \subfigure{{\includegraphics[width=.5\textwidth]{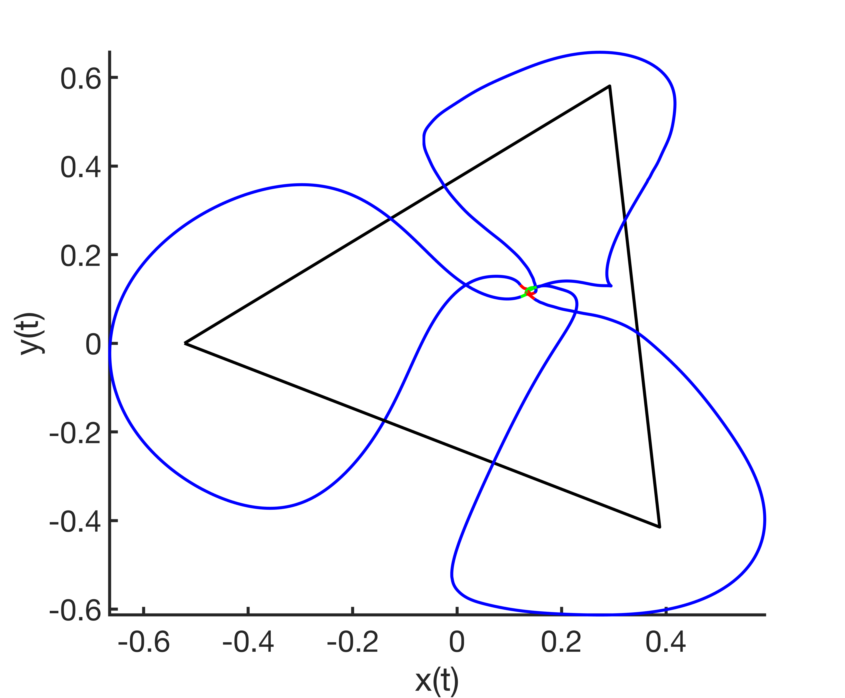} }}
      \subfigure{{\includegraphics[width=.5\textwidth]{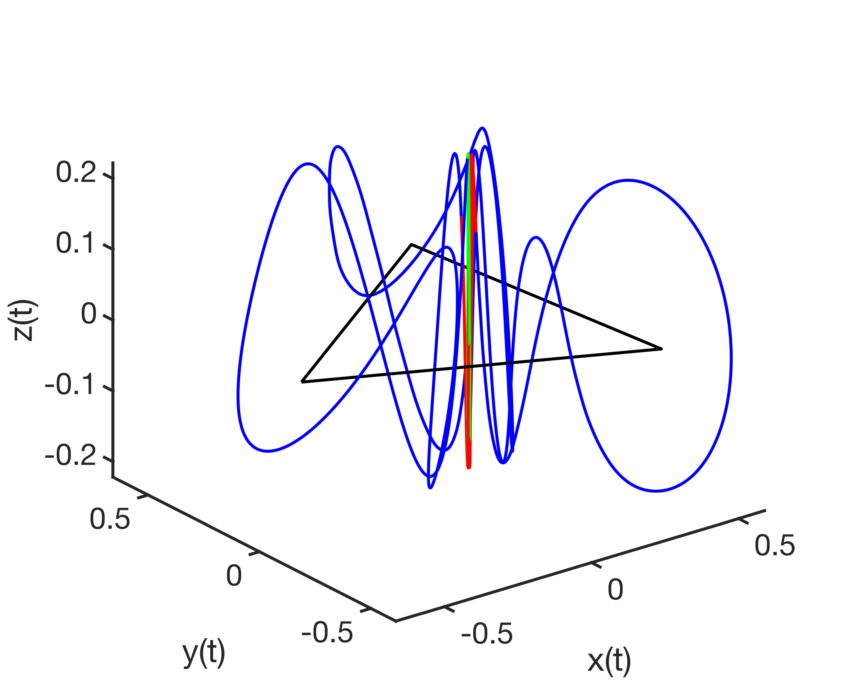}}}
\caption{Homoclinic connections to a spatial periodic orbit from the 
vertical Lyapunov family at $\Lc_0$.  Mass values of $m_1=0.4$ and $m_2=0.35$. 
$J= 3.2$ is the Jacobi constant of the periodic orbit.  Compare the shapes of the orbits in the left 
frame with the planar homoclinic orbits in the bottom right frame of Figure 21 in \cite{MR3919451},
or (more loosely) with the shapes of the planar homoclinics in the right frame
of Figure \ref{fig:homoclinics_L0} of the present work.
While the shape of the cycle-to-cycle homoclinics are clearly inherited from
the planar homoclinics,
the right frame illustrates the out of plane dynamics of the spatial homoclinic
tangle.  
 }\label{fig:L0spacial}
\end{figure}

The story is much the same for the fourth, fifth, and sixth shortest connections as
illustrated in  Figure \ref{fig:L0spacial}.  Again the homoclinic orbits have the 
shape predicted in the planar problem, as seen by considering the 
right frame of Figure \ref{fig:homoclinics_L0} in the present work, and also the 
bottom right frame of Figure 21 in \cite{MR3919451}.  At the same time it is 
important to remark once again that when viewed in the spatial problem we 
see that the cycle-to-cycle connections have substantial out of plane amplitude.

The situation is similar at $\mathcal{L}_5$.  Figures 
\ref{fig:homoclinicL5B},
\ref{fig:homoclinicL5A}, and \ref{fig:homoclinicL5}
illustrate the situation in the triple Copenhagen problem
with $m_1 = m_2 = m_3 = 1/3$.  Since the $\pm 120$ degree
rotational symmetry is not broken the dynamics are the same 
at $\mathcal{L}_{4,6}$.   The figures should be compared 
with Figure \ref{fig:homoclinics_L6} of the present work, which 
illustrates that indeed the shapes of the spatial cycle-to-cycle homoclinics
are in strong agreement with the planar saddle focus equilibrium homoclinics. 
The right frame of each of Figures 
\ref{fig:homoclinicL5B},
\ref{fig:homoclinicL5A}, and \ref{fig:homoclinicL5}
illustrates the out of plane motion of each homoclinic and the convergence
to the vertical Lyapunov orbit.

%
%Figure \ref{fig:homoclinicL5} shows homoclinic at $\Lc_5$ with equal masses, in this 
%case each connection can be rotated to obtain homoclinic at $\Lc_4$ and $\Lc_6$. In all 
%cases the connection found are out of plane and displays a lot of similarities with the planar 
%cases mentioned previously.
%

%
%\begin{figure}[!t]
%   % \centering
%     \subfigure{{\includegraphics[width=.5\textwidth]{L5allTop} }}
%      \subfigure{{\includegraphics[width=.5\textwidth]{L5allSide}}}
%\caption{Homoclinic connections to a periodic orbit at $\Lc_5$. On the left, the top view display a lot of similarities with the connecting orbit found in the planar case. On the right, The view shows that each orbit is indeed out of plane. }\label{fig:homoclinicL5}
%\end{figure}

\begin{figure}[!t]
 \subfigure{{\includegraphics[width=.45\textwidth]{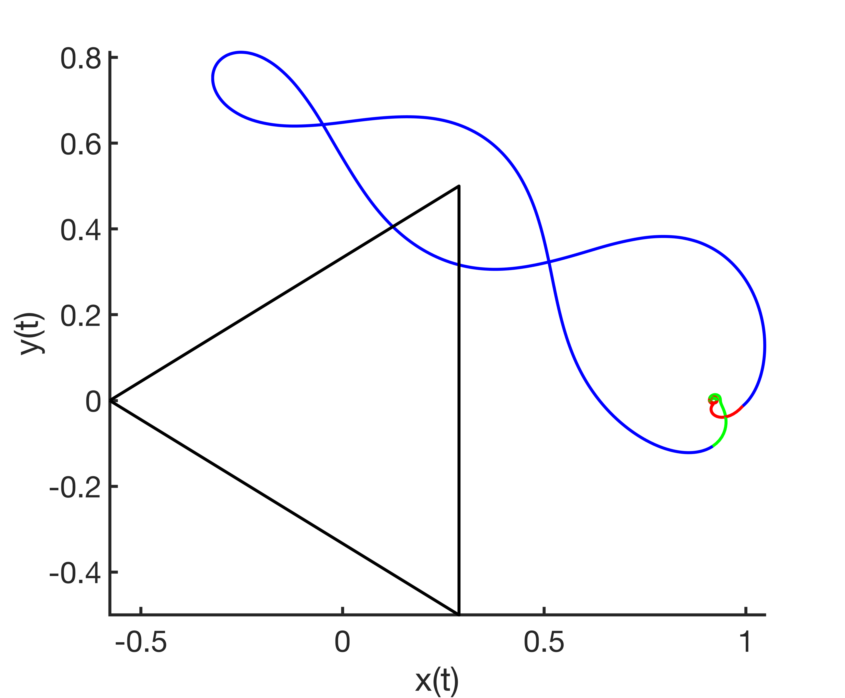} }} 
    \subfigure{{\includegraphics[width=.45\textwidth]{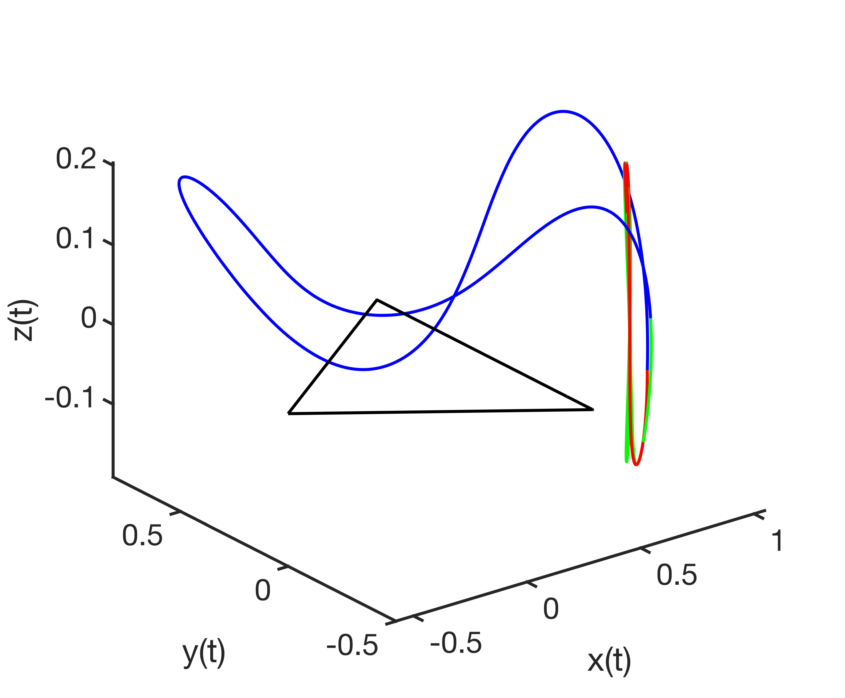}}}\\
     \subfigure{{\includegraphics[width=.45\textwidth]{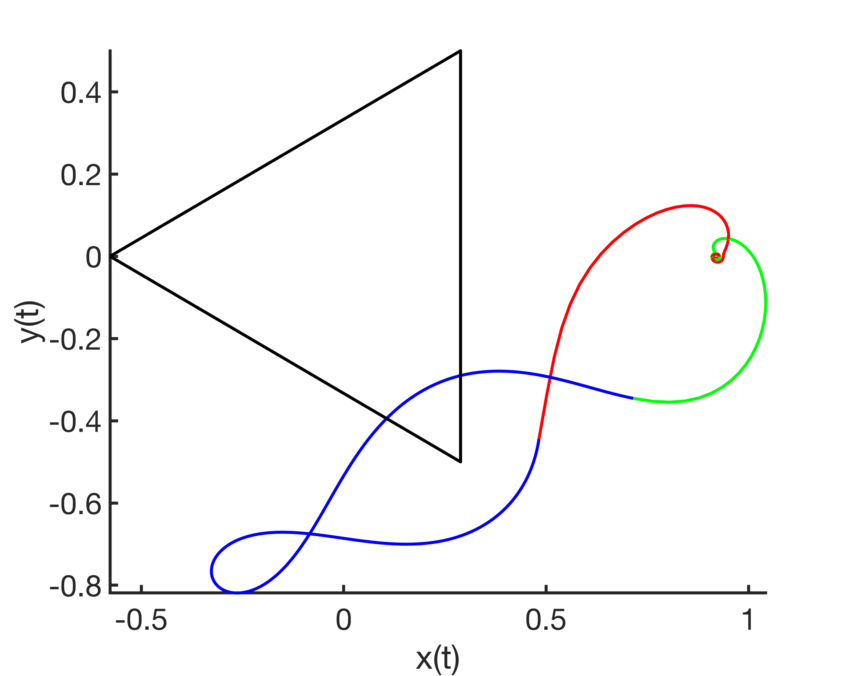} }} 
     \subfigure{{\includegraphics[width=.45\textwidth]{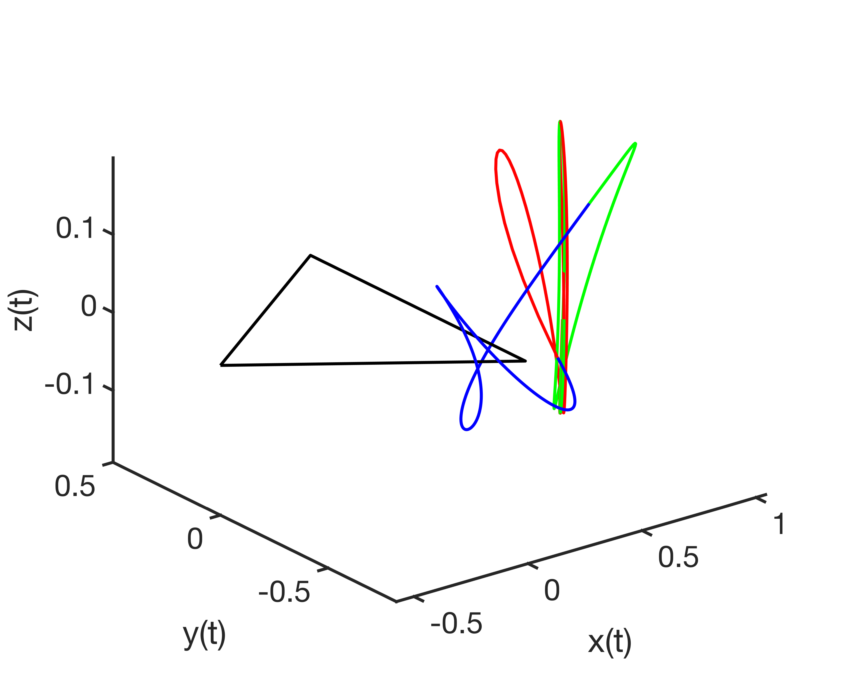}}}
       \caption{
       Homoclinic connections to a spatial periodic orbit from the 
vertical Lyapunov family at $\Lc_5$ in the triple Copenhagen problem.
Compare the shapes of the orbits in the left 
frames with the planar homoclinic orbits in the top two frames of 
Figure \ref{fig:homoclinics_L6} of the present work 
(Figure 18 in \cite{MR3919451}).
While the shape of the cycle-to-cycle homoclinics are clearly inherited from
the planar homoclinics,
the right frame illustrates the out of plane dynamics of the spatial homoclinic
tangle.       
%       Homoclinic connections to a periodic orbit at $\Lc_5$. On the left, the top view 
%display a lot of similarities with the connecting orbit found in the planar case. On the right, 
%The view shows that each orbit is indeed out of plane. 
}\label{fig:homoclinicL5B}
\end{figure}

\begin{figure}[!t]
\subfigure{{\includegraphics[width=.45\textwidth]{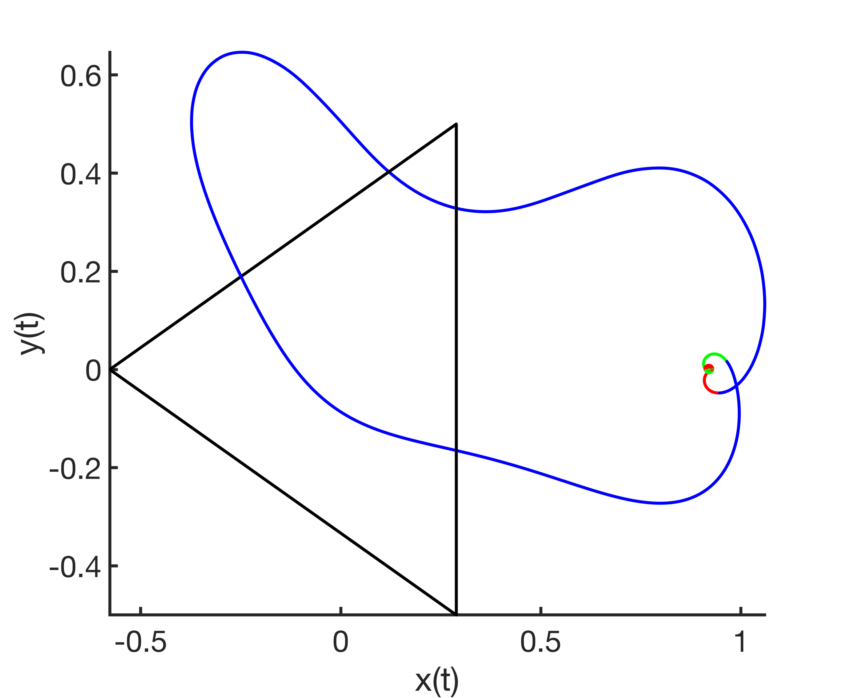} }}
     \subfigure{{\includegraphics[width=.45\textwidth]{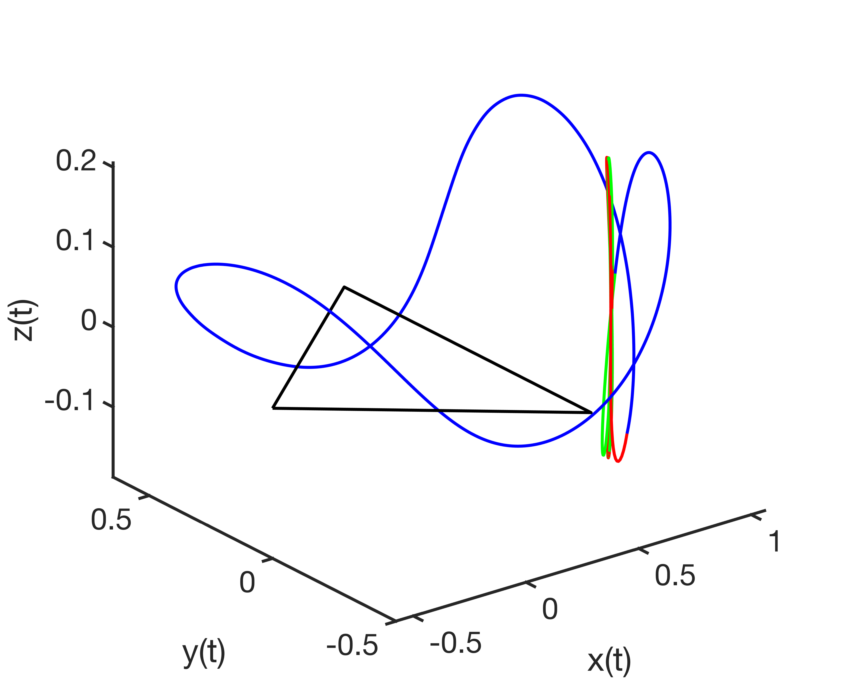}}} \\
       \subfigure{{\includegraphics[width=.45\textwidth]{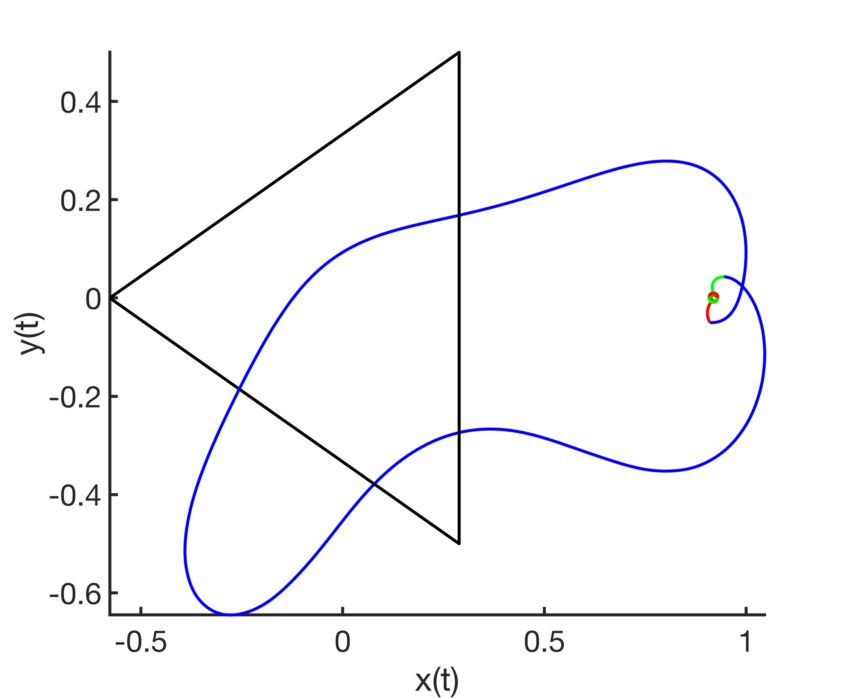} }} 
     \subfigure{{\includegraphics[width=.45\textwidth]{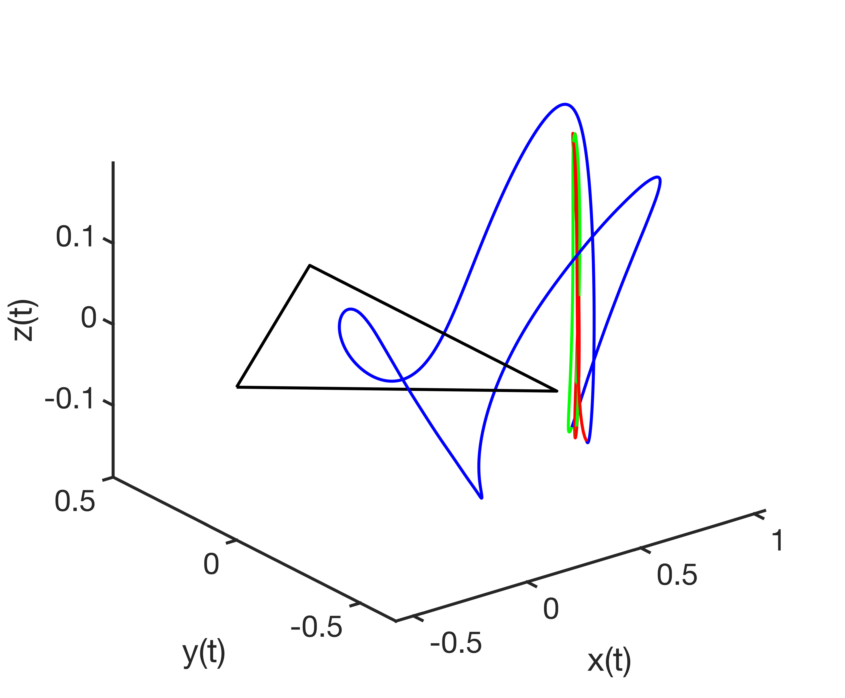}}}
     \caption{
          Homoclinic connections to a spatial periodic orbit from the 
vertical Lyapunov family at $\Lc_5$ in the triple Copenhagen problem.
Compare the shapes of the orbits in the left 
frames with the planar homoclinic orbits in the middle two frames of 
Figure \ref{fig:homoclinics_L6} of the present work 
(Figure 18
in \cite{MR3919451}).
While the shape of the cycle-to-cycle homoclinics are clearly inherited from
the planar homoclinics,
the right frame illustrates the out of plane dynamics of the spatial homoclinic
tangle.       
      }\label{fig:homoclinicL5A}
\end{figure}

\begin{figure}[!t]
  \subfigure{{\includegraphics[width=.5\textwidth]{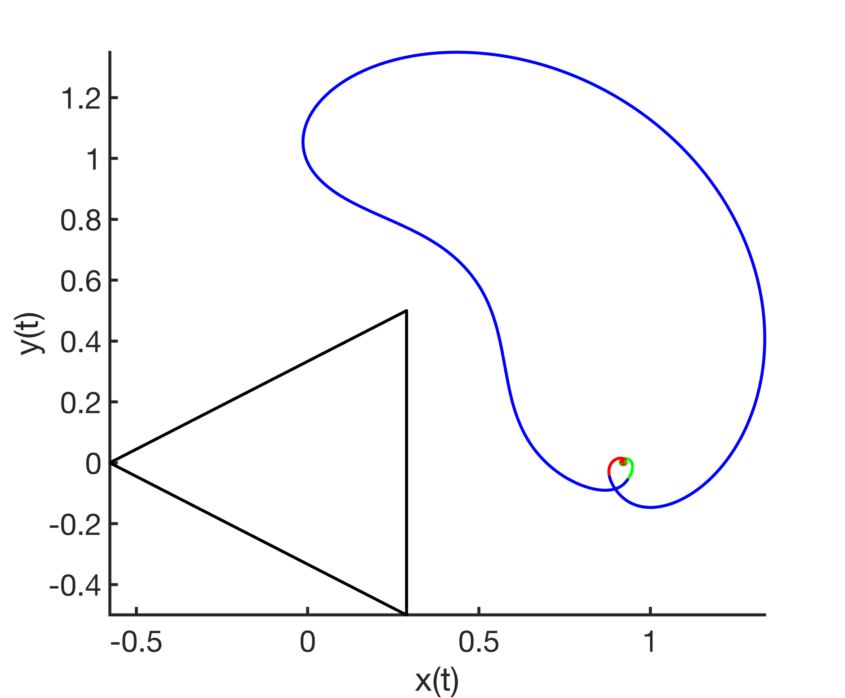} }}
     \subfigure{{\includegraphics[width=.5\textwidth]{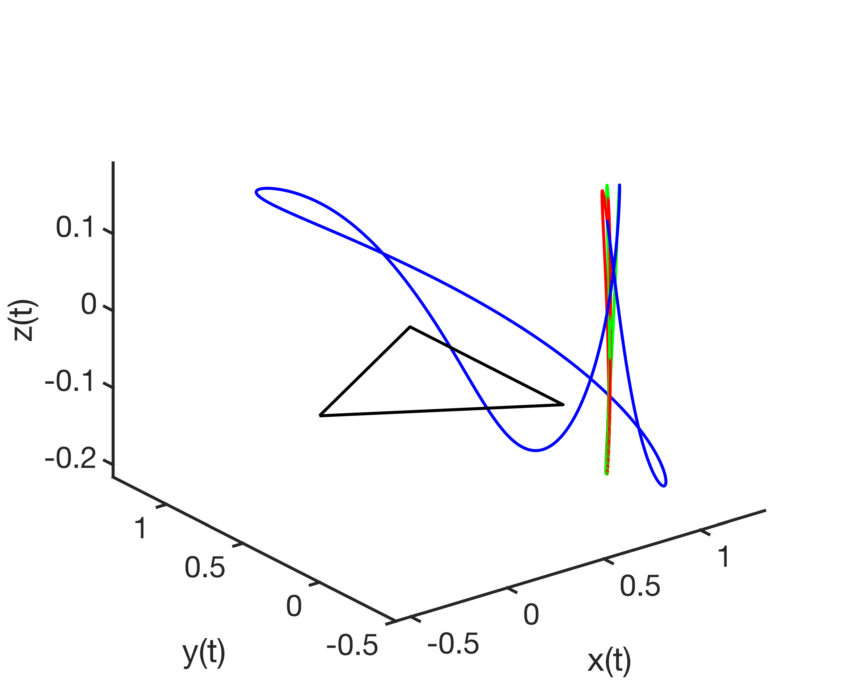}}} \\
       \subfigure{{\includegraphics[width=.5\textwidth]{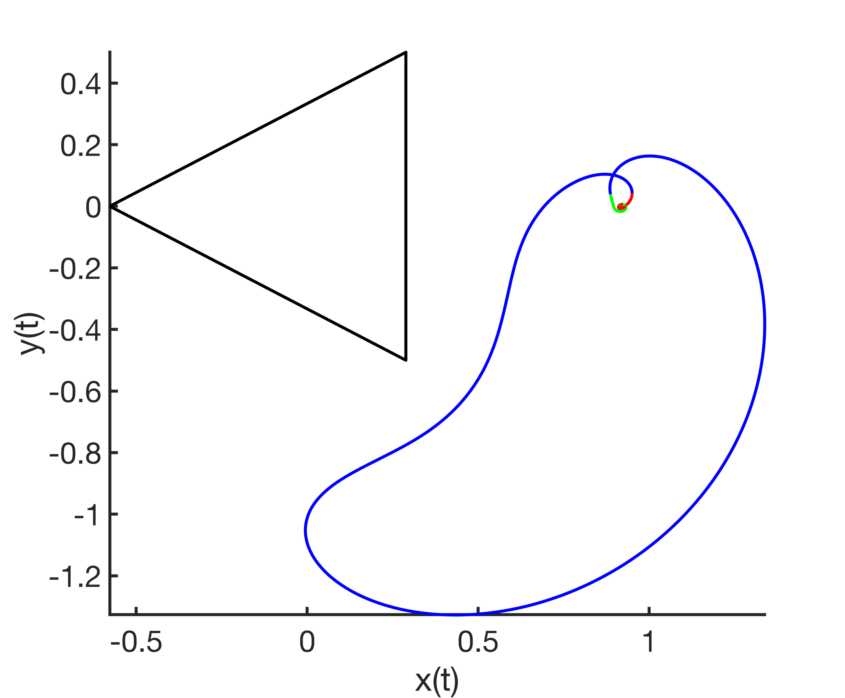} }}
     \subfigure{{\includegraphics[width=.5\textwidth]{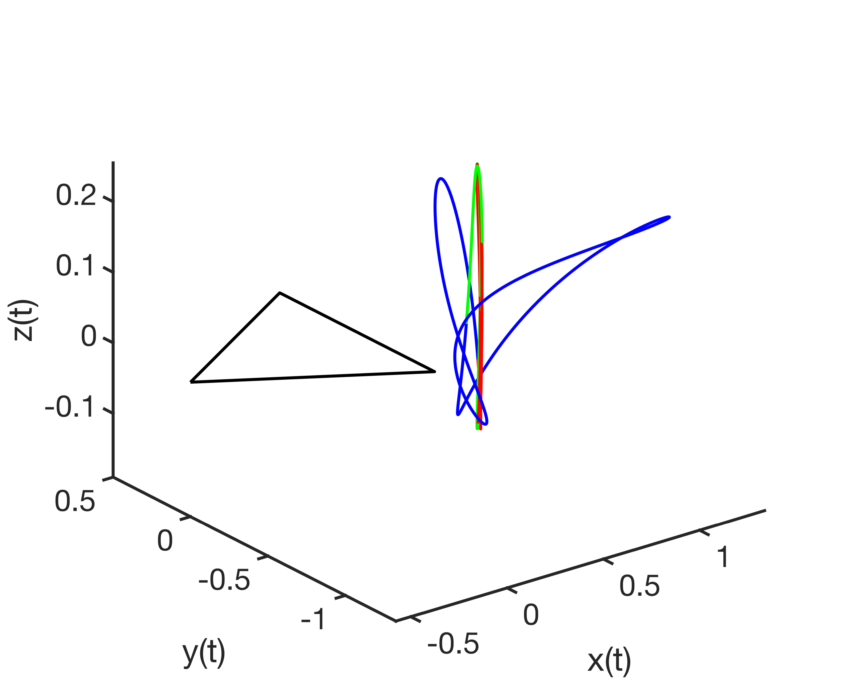}}} 
\caption{
     Homoclinic connections to a spatial periodic orbit from the 
vertical Lyapunov family at $\Lc_5$ in the triple Copenhagen problem.
Compare the shapes of the orbits in the left 
frames with the planar homoclinic orbits in the bottom two frames of 
Figure \ref{fig:homoclinics_L6} of the present work 
(Figure 18 in \cite{MR3919451}).
While the shape of the cycle-to-cycle homoclinics are clearly inherited from
the planar homoclinics,
the right frame illustrates the out of plane dynamics of the spatial homoclinic
tangle.        
}\label{fig:homoclinicL5}
\end{figure}

We illustrate in Figure \ref{fig:homoclinicL0continued} that the spatial dynamics just discussed
hold for nearby values of the Jacobi integral.  That is, the shapes are robust for nearby 
energies.  Continuation in the masses leads to similar robustness results.

\begin{figure}[!t]
   % \centering
    \subfigure{{\includegraphics[width=.5\textwidth]{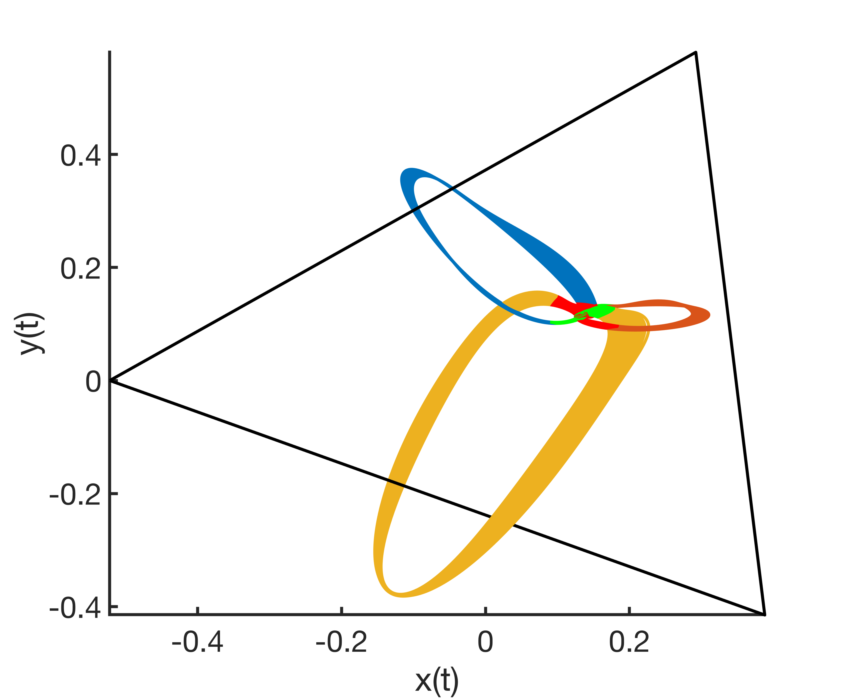}}}
    \subfigure{{\includegraphics[width=.5\textwidth]{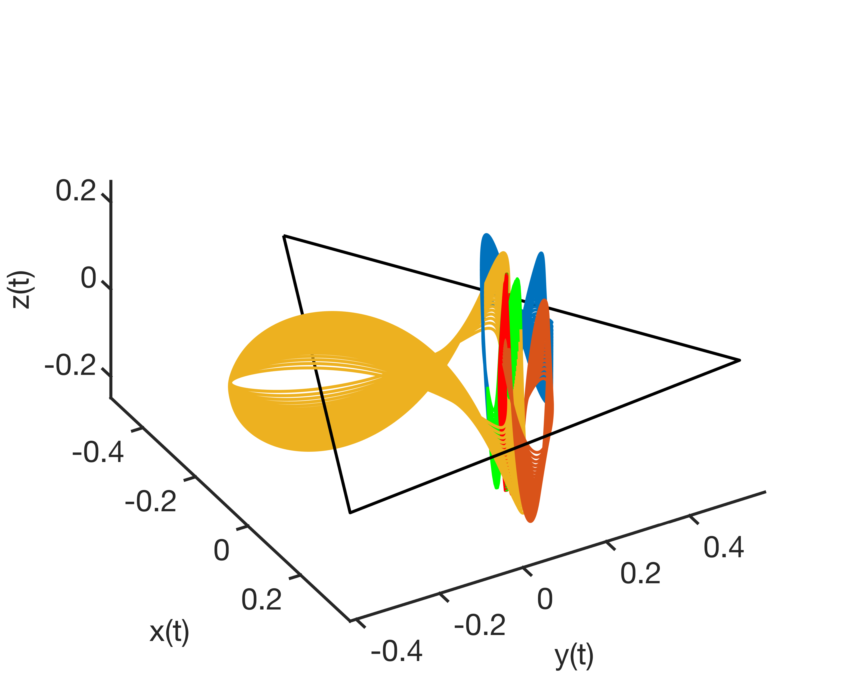}}}
\caption{\textbf{Numerical continuation of homoclinic connections for the spatial Lyapunov family 
   at $\mathcal{L}_0$:} Masses $m_1=0.4$, $m_2=0.35$ and values of the energy from 
     $J=3.15$ to $J=3.4$. The part displayed in green and red are given using the conjugacy relation and the Parameterization of the stable and unstable manifold respectively. The parameterization are approximated up to order $8$ in Taylor and order $20$ in Fourier. The part displayed in blue, orange and yellow are all given by the Chebyshev expansion. The three families of connecting orbit are out of plane and seem to accumulate to the planar homoclinic orbit displayed in section \ref{sec:Planar}. 
     The calculation suggests that the cycle-to-cycle chaos continuing out of the 
     planar homoclinic web persists for moderately large out of plane amplitudes.
      }\label{fig:homoclinicL0continued}
\end{figure}

Finally we provide some numerical indication that the picture does change dramatically 
when the Jacobi integral is changed enough.  For example the results in Figure 
\ref{fig:L0spacial255} shows the four shortest connecting orbits at $\mathcal{L}_0$ 
when $m_1 = 0.4, m_2 = 0.35$, and $m_3 = 0.25$, for $J = 2.55$.  We note that the 
height of the vertical Lyapunov periodic orbit at this value of the energy has more than 
doubles compared with the results in Figures \ref{fig:L0spacial315} and \ref{fig:L0spacial},
and that the shortest connections are dramatically shorter.  

Figure 
\ref{fig:L0spacial255} shows the same four orbits in the $xy$ projection, 
and we see that the shapes of the spatial cycle-to-cycle homoclinics are no longer
described by the planar problem.  We know that the $\mathcal{L}_2$ vertical family 
is very close to the $\mathcal{L}_0$ family at this value of energy. We conjecture that
there are heteroclinic cycles between the two familes at this energy, and that the 
short homoclinics shadow these connections.

\begin{figure}[!t]
   % \centering
    \includegraphics[width=\textwidth]{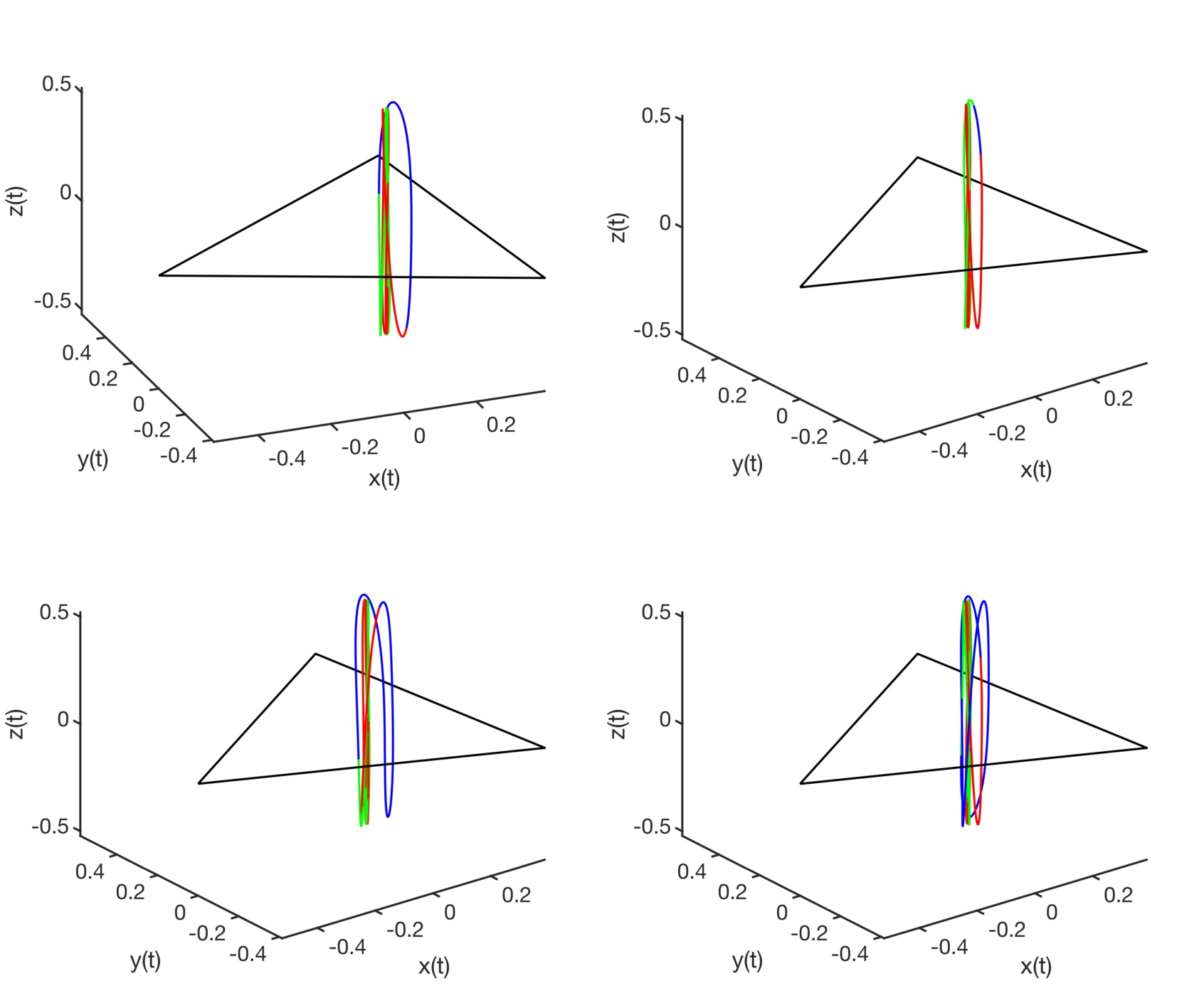}
\caption{\textbf{Four shortest homoclinic connections to a periodic orbit at $\Lc_0$-- large out of 
plane amplitude:} 
Mass values of $m_1=0.4$ and $m_2=0.35$ and $J= 2.55$ -- much higher out of plane 
amplitude than considered for the results illustrated in Figures
\ref{fig:L0spacial315} and \ref{fig:L0spacial}.  The results suggest a dramatic change in the 
phase space structure, as the shortest homoclinics no longer resemble the planar 
case.  The likely explanation is that the vertical Lyapunov family at $\mathcal{L}_2$ 
is close to the vertical family at $\mathcal{L}_0$, and that the stability of both are
saddle focus.  There are likely heteroclinic connections between these two families,
and the homoclinics shown here shadow these heteroclinics.  
}\label{fig:L0spacial255}
\end{figure}

\begin{figure}[!t]
   % \centering
    \includegraphics[width=0.7\textwidth]{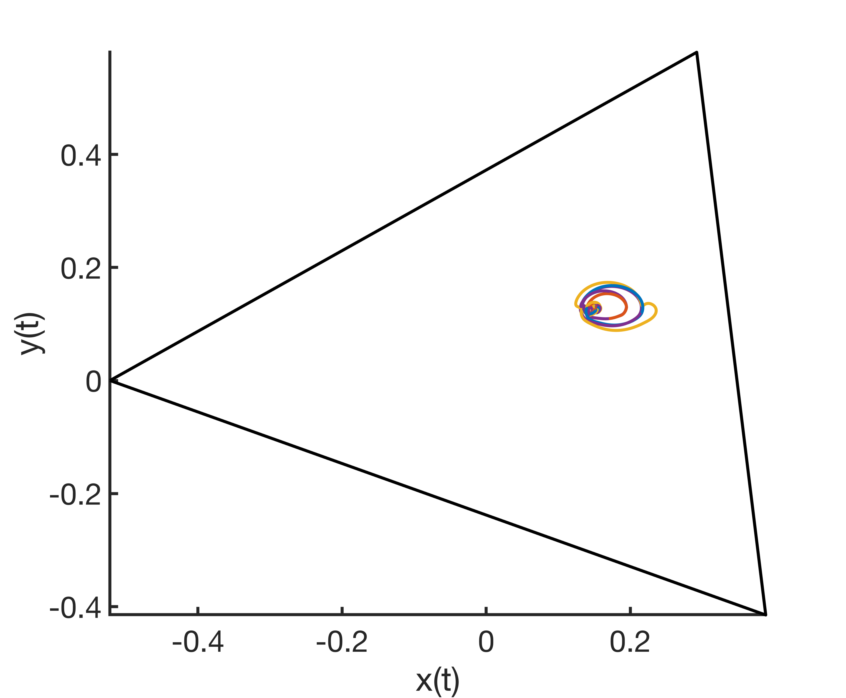}
\caption{View from above of all four shortest homoclinic connections 
displayed in Figure \ref{fig:L0spacial255}.  }
\end{figure}

\section{Conclusions} \label{sec:conclusions}
The numerical results given in the main body of the present work 
provide an example of a case where the existence of a blue sky 
catastrophe in the planar subsystem gives rise to chaotic motions
in the full spatial problem.  This is a very interesting phenomena 
because while the blue sky catastrophe can appear only at 
discrete values of the Jacobi integral of the planar system --
the energies of the saddle focus equilibrium solutions -- 
the transverse cycle-to-cycle homoclinics are robust with respect
to small perturbations in the energy.  The dynamics in the planar
system at the saddle focus energy level have  
ramifications for the dynamics of the spatial problem over a whole 
range of  energies away from the planar value.

The discussion can be formalized as follows.  Suppose that a three degree 
of freedom Hamiltonian system has (A) an invariant planar (two degree of freedom)
subsystem, (B) an in plane equilibrium solution whose linear stability 
is saddle-focus relative to the invariant plane and saddle-focus $\times$ center
in the full problem, (C) an in plane orbit homoclinic to the saddle-focus
equilibrium.  Then:
\begin{itemize}
\item \textbf{$1$(P)} There exists an invariant tube of planar periodic orbits, parameterized by energy, accumulating to the homoclinic orbit.  The stability of the orbits change infinitely many times as they approach the homoclinic along the tube. 
\item \textbf{$2$(P)} There are chaotic dynamics in a neighborhood of the homoclinic. The chaotic subsystem is an invariant subset of the plane in the energy level of the equilibrium.  
\item \textbf{$3$(C)}  There is a one parameter family of out of plane periodic 
orbits in the center manifold of the equilibrium.  In a small enough 
neighborhood of the equilibrium these periodic orbits
orbits have saddle-focus stability and hence three dimensional stable/unstable manifolds.
For any out of plane periodic orbit with small enough out of plane amplitude,
the stable/unstable manifolds of the periodic orbit
intersect transversally near the planar homoclinic.
It follows that there are chaotic dynamics in the energy level of the 
out of plane orbit.  
\end{itemize}  

We label $1$ and $2$ with a P, as these are theorems whose proofs 
are already in the literature.  Indeed these are simply 
restatements of the theorems of Henard \cite{MR0365628} and Devaney 
\cite{MR0442990} respectively.  Point $3$ is labeled with a $C$ as, to the 
best of our knowledge this point is conjecture.  
Indeed, the conjecture may be false without further clarification by 
other hypotheses, however 
the computations discussed 
in the present work illustrate that there appear to be situations where it holds.

A proof of $3$ is far beyond the scope of the present -- largely numerical -- work.
Though we provide the following remarks 
outlining an argument which we believe could be made precise with 
appropriate refinments.  First we note that the equilibrium
satisfies the hypothesis of the Lyapunov center theorem -- see for example
\cite{MR0021186,MR0096021,MR1345153} -- since there is only one center
direction and the other directions at the equilibrium are hyperbolic
(assuming that the vector field and the first integral are analytic).  
This guarantees the existence of the out of plane family of periodic orbits.  
The saddle-focus stability of the out of plane orbits follows from the 
center-stable manifold theorem \cite{MR0221044,MR635782}, and from the 
saddle-focus stability follows the claim about the dimension of the stable/unstable
manifolds of the periodic orbit.

The existence of a transverse connecting orbit could be completed by 
formulating the connecting orbit as the solution of a two point boundary 
value problem (BVP), with boundary conditions projected onto the stable/unstable 
manifolds of the periodic orbit.  See \cite{MR2511084} for more complete 
discussion of the BVP formulation of the connecting orbit.  An approximate 
connecting orbit is obtained by taking a suitable portion of the planar
homoclinic.  If the approximation is ``good enough'' then there is hope that
an application of the Newton-Kantorovich theorem \cite{MR0231218} 
could complete the proof.

We stress however that even if the above outline were completed
it would provide results only in a, possibly very, small neighborhood of the invariant plane.  
The numerical results given in the present work on the other hand 
suggest that the planar homoclinics can have an important organizing
effect on the dynamics even for Lyapunov orbits with large out of 
plane amplitude.  At least this appears to be the case for the spatial 
equilateral restricted four body problem.  

A more interesting topic of future work would be to refine the numerical results of 
the present work into theorems for explicit larger out of plane amplitudes. 
 Most likely this would be done using computer
assisted methods of proof.  For example a method for proving the existence of spatial periodic 
orbits for the CRFBP has already been given in \cite{jpJaimeAndMe}, where indeed
the existence of many out of plane orbits coming from the vertical Lyapunov family have 
already been established using computer assisted means.    
Using the methods of \cite{poManProofs} -- or some modification of these -- 
one could compute validated bounds on the attached stable/unstable manifolds
of these periodic orbits.  Once the stable/unstable manifold validations are validated then
computer assisted proof of the desired transverse homoclinic connections can be given 
using small modifications of the techniques developed in \cite{MR3207723,MR3353132}.
Implementing the computer assisted argument just sketched is the topic of 
a work in preparation by the authors.

\begin{acknowledgements}
The second author was partially supported by NSF grant 
DMS-1813501.
Both authors were partially supported by NSF grant DMS-1700154 
and by  the Alfred P. Sloan Foundation grant G-2016-7320.
%%%%%%%%%%%%%%%
% If we end up using matlab figures I used the following package to export them:
% Yair Altman (2019). export_fig (https://www.github.com/altmany/export_fig), GitHub. Retrieved December 6, 2019.
%%%%%%%%%%%%%%%
\end{acknowledgements}

\appendix

%==================================================================
%   appendix A
%==================================================================

\section{Obtaining a polynomial field by automatic differentiation of the CRFBP} \label{sec:autoDiff}

To facilitate formal series calculations in the CRFBP, we first rewrite the problem as a first order ordinary differential equation
 and then introduce a change of variable, often referred to as automatic differentiation, to obtain a polynomial vector field. 
 The problem is recovered via projection, as long as the initial conditions are restricted to an appropriate submanifold. 
We first set
\begin{equation}\label{eq:AutomaticDiffeqn1}
 u_1 = x, \quad u_2 = \dot x, \quad u_3= y, \quad u_4 = \dot y, \quad u_5=z,\quad u_6= \dot z,
\end{equation}
and obtain a first order ODE $\dot u = f(u)$ given by
\begin{equation} \label{eq:FirstOrderOriginal}
\begin{split}
\dot{u_1} &= u_2, \\
\dot{u_2} &= 2 u_4 +\Omega_{u_1},\\
\dot{u_3} &= u_4, \\
\dot{u_4} &= -2u_2 +\Omega_{u_3},\\
\dot{u_5} &= u_6, \\
\dot{u_6} &= \Omega_{u_5},
\end{split}
\end{equation}
where $\Omega$ is as previously given but using the new set of variable. This vector field still have singularities introduced by the terms corresponding to the inverse of the distance with the primaries, we extend our set of variable using the following definitions

\begin{align}
u_7 = \frac{1}{\sqrt{(x-x_1)^2 +(y-y_1)^2 +(z-z_1)^2}} =  \frac{1}{\sqrt{(u_1-x_1)^2 +(u_3-y_1)^2 +(u_5-z_1)^2}}, \label{eq:AutomaticDiffeqn2} \\
u_8 = \frac{1}{\sqrt{(x-x_2)^2 +(y-y_2)^2 +(z-z_2)^2}} =  \frac{1}{\sqrt{(u_1-x_2)^2 +(u_3-y_2)^2 +(u_5-z_2)^2}}, \label{eq:AutomaticDiffeqn3} \\
u_9 = \frac{1}{\sqrt{(x-x_3)^2 +(y-y_3)^2 +(z-z_3)^2}} =  \frac{1}{\sqrt{(u_1-x_3)^2 +(u_3-y_3)^2 +(u_5-z_3)^2}}. \label{eq:AutomaticDiffeqn4}
\end{align}

Let $U \subset \mathbb{R}^6$ be an open set excluding the primaries. Then, a direct computation provides that for the function $R:U \to \mathbb{R}^9$ given by
\begin{equation}\label{eq:R}
R(u_1,u_2,u_3,u_4,u_5,u_6) = \begin{pmatrix}
u_1 \\ u_2 \\ u_3 \\ u_4 \\ u_5 \\ u_6 \\                
\frac{1}{\sqrt{(u_1-x_1)^2 +(u_3-y_1)^2 +(u_5-z_1)^2}} \\
\frac{1}{\sqrt{(u_1-x_2)^2 +(u_3-y_2)^2 +(u_5-z_2)^2}} \\
\frac{1}{\sqrt{(u_1-x_3)^2 +(u_3-y_3)^2 +(u_5-z_3)^2}}
\end{pmatrix}
\end{equation}
and the polynomial vector field $F \colon \mathbb{R}^9 \to \mathbb{R}^9$ given by 
\begin{align}\label{eq:Fwithoutalpha}
F(u)= 
\begin{pmatrix}
u_2 \\
2u_4 +u_1 +m_1(x_1-u_1)u_7u_7u_7 +m_2(x_2-u_1)u_8u_8u_8 +m_2(x_3-u_1)u_9u_9u_9 \\
u_4 \\
-2u_2 +u_3 +m_1(y_1-u_3)u_7u_7u_7 +m_2(y_2-u_3)u_8u_8u_8 +m_2(x_3-u_1)u_9u_9u_9 \\
u_6 \\
m_1(z_1-u_5)u_7u_7u_7 +m_2(z_2-u_5)u_8u_8u_8 +m_2(z_3-u_5)u_9u_9u_9 \\
(x_1-u_1)u_2u_7u_7u_7 +(y_1-u_3)u_4u_7u_7u_7 +(z_1-u_5)u_6u_7u_7u_7 \\
(x_2-u_1)u_2u_8u_8u_8 +(y_2-u_3)u_4u_8u_8u_8 +(z_2-u_5)u_6u_8u_8u_8 \\
(x_3-u_1)u_2u_9u_9u_9 +(y_3-u_3)u_4u_9u_9u_9 +(z_3-u_5)u_6u_9u_9u_9
\end{pmatrix},
\end{align}
we have the infinitesimal conjugacy 
\begin{equation}\label{eq:autoDiff}
DR(u)f(u) = F(R(u)), \quad \forall u \in U.
\end{equation}
Hence orbits of $u' = F(u)$ have the same dynamics as $x' = f(x)$ after projecting 
onto the first six components.
We note that, as an effect of the change of variable, the new vector field does not have any singularity. Nevertheless,
the dynamics of the two are related only on the graph of $R$, and $R$ caries the singularities of 
$f$.  The following items formalize the remarks just made.

\begin{enumerate}
 \item Let $\pi:\mathbb{R}^9 \to \mathbb{R}^6$ denotes the projection onto the first six coordinates. So that for all $u \in U$ we have $u=\pi(R(u))$ and
\[
\pi(F(R(u))) =f(u). 
\]
Therefore we recover the original problem.
\item The orbits of $f$ are mapped onto orbits of $F$ under $R$ and the graph of $R$ is invariant under the flow of $F$. 
\item If $H:\mathbb{R}^6 \to \mathbb{R}$ is constant along curves solution of the initial system, then $G:\mathbb{R}^9 \to \mathbb{R}$ such that $G(R(u))= H(u)$ for all $u \in U$ is constant along curves solution of the extended problem.
\end{enumerate}

It follows from those remarks that it is possible to find periodic orbits of the four body problem using the vector field $F$. Our goal is to compute stable and unstable manifolds for a periodic orbit $\gamma(t)$ of $\dot{u}=f(u)$, so that we have to show that the associated periodic orbit $\Gamma(t)=R(\gamma(t))$ has the same stability type. This is the object of the next Theorem.  

\begin{theorem}
Let $\gamma(t)$ be a periodic orbit of $\dot{u}=f(u)$ with Floquet multiplier $\lambda$ associated to the tangent bundle $v(t)$. Then $\lambda$ is a Floquet multiplier of the periodic orbit $\Gamma(t)=R(\gamma(t))$, solution to the system $\dot x = F(x)$, moreover $\xi(t)= DR(\gamma(t))v(t)$ is the associated tangent bundle. 
\end{theorem}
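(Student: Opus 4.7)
The plan is to reduce the theorem to the chain rule together with the infinitesimal conjugacy \eqref{eq:autoDiff}. The two things to verify are (i) that $\Gamma(t)=R(\gamma(t))$ is an honest periodic orbit of $\dot x=F(x)$, and (ii) that the proposed tangent field $\xi(t)=DR(\gamma(t))v(t)$ satisfies the Floquet bundle equation along $\Gamma$ with the same exponent $\lambda$ as the one attached to $v$ along $\gamma$. Periodicity of $\xi$ and the non-triviality of the bundle will then follow at once from the very simple structure of $R$.

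For (i) I would simply differentiate $\Gamma(t)=R(\gamma(t))$ and apply \eqref{eq:autoDiff}:
\[
\dot\Gamma(t) \;=\; DR(\gamma(t))\,\dot\gamma(t) \;=\; DR(\gamma(t))\,f(\gamma(t)) \;=\; F(R(\gamma(t))) \;=\; F(\Gamma(t)).
\]
Since $\gamma$ is $T$-periodic and $R$ is time-independent, $\Gamma(t+T)=\Gamma(t)$, so $\Gamma$ is a $T$-periodic orbit of $\dot x=F(x)$.

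For (ii) the key computation is to differentiate the conjugacy $DR(u)f(u)=F(R(u))$ once more in $u$. Applying the derivative in the direction of an arbitrary vector $w\in\rr^6$ gives the bilinear identity
\[
D^2R(u)[w,f(u)] \;+\; DR(u)\,Df(u)\,w \;=\; DF(R(u))\,DR(u)\,w,
\]
which holds pointwise on $U$. Now differentiate $\xi(t)=DR(\gamma(t))v(t)$ by the chain and product rules and substitute the above identity with $u=\gamma(t)$ and $w=v(t)$:
\begin{align*}
\dot\xi(t) &= D^2R(\gamma(t))[f(\gamma(t)),v(t)] + DR(\gamma(t))\,\dot v(t) \\
&= DF(\Gamma(t))\,\xi(t) \;+\; DR(\gamma(t))\big[\dot v(t) - Df(\gamma(t))v(t)\big].
\end{align*}
Using the bundle equation \eqref{eq:bundleEqn} for $v$, namely $\dot v(t) - Df(\gamma(t))v(t) = -\lambda v(t)$, the bracket collapses to $-\lambda v(t)$ and I obtain exactly
\[
-\dot\xi(t) + DF(\Gamma(t))\,\xi(t) \;=\; \lambda\,\xi(t),
\]
the defining equation of a Floquet bundle along $\Gamma$ with exponent $\lambda$.

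Finally, $\xi$ inherits the periodicity ($T$ or $2T$) of $v$ because both $\gamma$ and $v$ are periodic, and $\xi$ is nowhere zero because $DR(u)$ is injective at every $u\in U$: inspecting \eqref{eq:R}, the first six rows of the Jacobian $DR(u)$ are the $6\times 6$ identity block, so $\ker DR(u)=\{0\}$. Hence $v(t)\neq 0$ implies $\xi(t)\neq 0$ for all $t$, and $\xi$ is a genuine nontrivial Floquet bundle of $\Gamma$ associated with $\lambda$. The only non-routine point is the second derivative identity above; there is no real obstacle beyond bookkeeping, and the argument never leaves the graph of $R$, where the conjugacy holds.
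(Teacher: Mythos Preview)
Your proof is correct and follows essentially the same route as the paper: differentiate the infinitesimal conjugacy \eqref{eq:autoDiff} once more to obtain the identity $D^2R(u)f(u)+DR(u)Df(u)=DF(R(u))DR(u)$, then combine it with the bundle equation for $v$ to show $\xi=DR(\gamma)v$ satisfies the Floquet bundle equation along $\Gamma$ with the same $\lambda$. Your added remarks that $\Gamma$ is indeed a periodic solution of $\dot x=F(x)$ and that $DR$ is injective (so $\xi$ is nontrivial) are useful clarifications not made explicit in the paper's version.
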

\begin{proof}
We first note that $v(t)$ will satisfy
\begin{equation}\label{eq:Bundle}
\dot v (t)= Df(\gamma(t))v(t) -\lambda v(t)
\end{equation}
and that differentiating \eqref{eq:autoDiff} provides
\begin{equation}\label{eq:DiffautoDiff}
D^2R(u)f(u) +DR(u)Df(u) = DF(R(u))DR(u). 
\end{equation}
So that a direct computation provides that
\begin{align*}
\dot\xi(t) &= D^2R(\gamma(t))\dot\gamma(t)v(t) +DR(\gamma(t))\dot v(t) \\
           &= D^2R(\gamma(t))f(\gamma(t))v(t) +DR(\gamma(t))Df(\gamma(t))v(t) -DR(\gamma(t))\lambda v(t)
\end{align*}
where we used the fact that $\gamma(t)$ is a periodic solution of $f$ as well as equation \eqref{eq:Bundle}. Then using \eqref{eq:DiffautoDiff}, $\Gamma(t)=R(\gamma(t))$ and $\xi(t)= DR(\gamma(t))v(t)$, we obtain that
\[
\dot \xi(t)= DF(\Gamma(t))\xi(t) -\lambda \xi(t).
\]
This is the desired result.
\end{proof}

It follows from this result that in the extended system six of the multiplier will be known from the usual theory. The other three are all zeros so that the dimension of the stable and unstable manifolds for any orbits remain unchanged.

%==================================================================
%   orbit Data
%==================================================================

\section{Orbit Data} \label{sec:orbitData}
In this section we provide several tables of data meant to make the present work more 
reproducible.  Since our calculations of the connecting orbits utilize fairly sophisticated 
Fourier-Taylor approximations of the local stable/unstable manifolds in the formulation 
of the two point boundary value problems, it is unreasonable to think that the casual reader
would reimplement these calculations. On the other hand, many readers will have
 experience in the use of numerical integrators for problems in celestial mechanics 
and once equipped with the equations of motion it is not unreasonable to think one 
might want to reproduce some of the periodic orbits and connections discussed in the 
present work.  To this end we provide accurate initial conditions which can be integrated 
to reproduce the orbits discussed in the present work.  The resulting orbits could also 
be taken as initial conditions for numerical continuation software packages like 
AUTO or MatCont.  

The table are all organized the same way. In the first column, we give the initial point 
expressed as a six-dimensional vector representing the initial position and momentum. 
The coordinates are given in the following order
\[
P_0 = ( x, \dot x, y, \dot y, z, \dot z).
\]
Then the second column of the table provides $T$ an approximation of the period of the 
periodic orbit starting at the point previously given. The third column is $n$, the number of Floquet 
multiplier with positive real part. Finally, the last column shows $J(P_0)$, the energy level of the 
initial data. We note that that case of interest in this paper is when $n=2$ and the multipliers are 
complex conjugate. To obtain the data, we start by computing the center manifold of each libration 
point to find an initial guess for $P_0$ and $T$. To improve the guess, we numerically integrate the 
approximated periodic orbit and express the result in Fourier coefficients. Then Newton's method is applied to 
obtain a guess for the periodic orbit with defect close to machine precision, for all cases covered by the tables 
it suffices to take $50$ Fourier coefficients. The resulting sequence of Fourier coefficients is then a starting 
point for any continuation method in order to find other members of the family. To construct the table, we used 
a zeroth order predictor-corrector algorithm using Newton's method in the space of Fourier coefficients, in this case 
the frequency is an unknown of the system while the energy level is one of the input of the algorithm. The cases 
of $\mathcal{L}_0$ is given at $m_1=0.4$ and $m_2=0.35$ while the case at $\mathcal{L}_5$ is given with equal masses.

\begin{table}[!t]
\begin{tabular}{| L | L | L | L |}
\hline
P_0 & T & n& J(P_0) \\
\hline
(0.1108,-0.0339,0.1004,-0.0068,0.7468,0.3387) &5.3875 &1 & 2.0 \\
(0.1192,-0.0190,0.1138,-0.0015,0.6975,0.3190) &5.0672 &1 & 2.1 \\
(0.1277,-0.0082,0.1226,0.0016,0.6501,0.2993) &4.7779 & 1 & 2.2 \\
(0.1356,-0.0007,0.1277,0.0034,0.6045,0.2797) &4.5162 & 1 & 2.3 \\
(0.1425,0.0042,0.1303,0.0041,0.5605,0.2601) & 4.2789 & 2 & 2.4 \\
(0.1480,0.0073,0.1309,0.0042,0.5177,0.2405) & 4.0632 & 2 & 2.5 \\
(0.1515,0.0090,0.1303,0.0039,0.4759,0.2209) & 3.8666 & 2 & 2.6 \\
(0.1526,0.0095,0.1289,0.0034,0.4347,0.2013) & 3.6873 & 2 & 2.7 \\
(0.1515,0.0092,0.1270,0.0029,0.3938,0.1817) & 3.5238 & 2 & 2.8 \\
(0.1485,0.0083,0.1248,0.0023,0.3524,0.1620) & 3.3746 & 2 & 2.9 \\
(0.1444,0.0070,0.1224,0.0017,0.3099,0.1421) & 3.2382 & 2 & 3.0 \\
(0.1398,0.0055,0.1199,0.0013,0.2652,0.1220) & 3.1132 & 2 & 3.1 \\
(0.1349,0.0038,0.1174,0.0009,0.2162,0.1014) & 2.9983 & 2 & 3.2 \\
(0.1302,0.0022,0.1150,0.0006,0.1584,0.0797) & 2.8924 & 2& 3.3 \\
(0.1257,0.0005,0.1125,0.0002,0.0716,0.0549) & 2.7944 & 2& 3.4 \\
\hline
\end{tabular}
\caption{Family at $L_0$}
\end{table}

\begin{table}
\begin{tabular}{| L | L | L | L |}
\hline
P_0 & T & n & J(P_0)  \\
\hline
(0.4844,-0.5703,-0.2358,0.0306,0.6981,0.5661) &6.2404 & 1 &1.6\\
(0.5063,-0.5224,-0.2102,0.0344,0.6830,0.5563) &6.1590 & 1 &1.7\\
(0.5312,-0.4749,-0.1858,0.0383,0.6656,0.5437) &6.0753 & 1 &1.8\\
(0.5589,-0.4281,-0.1629,0.0419,0.6458,0.5279) &5.9924 & 1 &1.9\\
(0.5892,-0.3822,-0.1417,0.0448,0.6236,0.5088) &5.9132 & 1 &2\\
(0.6225,-0.3378,-0.1221,0.0452,0.5974,0.4883) &5.8397 & 1 &2.1\\
(0.6576,-0.2946,-0.1040,0.0442,0.5679,0.4649) &5.7732 & 1 &2.2\\
(0.6940,-0.2525,-0.0873,0.0416,0.5344,0.4385) &5.7140 & 2 &2.3\\
(0.7313,-0.2113,-0.0717,0.0376,0.4964,0.4085) &5.6618 & 2 &2.4\\
(0.7689,-0.1712,-0.0571,0.0323,0.4528,0.3744) &5.6162 & 2 &2.5\\
(0.8068,-0.1319,-0.0433,0.0259,0.4019,0.3351) &5.5764 & 2 &2.6\\
(0.8447,-0.0934,-0.0303,0.0184,0.3405,0.2884) &5.5417 & 2 &2.7\\
(0.8824,-0.0556,-0.0178,0.0101,0.2616,0.2297) &5.5115 & 2 &2.8\\
(0.9195,-0.0180,-0.0057,0.0016,0.1408,0.1423) &5.4852 & 2 &2.9\\
\hline
\end{tabular}
\caption{Family at $L_5$, this table is computed with equal masses and 
we recall that periodic orbits at $\mathcal{L}_{4,6}$ can be obtained by a 
rotation of $\pm 120$ degrees. The cases of energy from $2.4$ to $2.50$ 
have real Floquet multipliers while the remaining of the table are complex conjugate.}
\end{table}

The connecting orbits in Figure \ref{fig:L0spacial} are homoclinic and accumulate to the periodic orbit with initial condition in the row $J=3.2$, given by the table for $\mathcal{L}_0$. The initial data with higher accuracy is
\begin{align*}
P_0= \begin{pmatrix}  0.134934339930888  \\ 0.003888013139251 \\  0.117443350170703 \\  0.000936082833871 \\ 0.216240831347475  \\ 0.101389225000425 \end{pmatrix}, \quad
T= 2.998307362412966.
\end{align*}
To reproduce the trajectories displayed one can integrate the following initial values $P_0$ back and forward in time for the given time $T$. the starting and ending point of the resulting trajectories will lay on the boundary of the parameterized unstable and stable manifold respectively.
\begin{align*}
P_0= \begin{pmatrix} -0.585194841158983 \\ 0.650674788263036 \\ -0.242897059971999 \\ -0.809665850514842 \\ 0.015366927308435 \\ 0.645609803647810 \end{pmatrix}, \quad
T=  3.9083,\\
P_0= \begin{pmatrix} -0.010028232796882 \\  0.018905042025788 \\ -0.527614375771166 \\ -0.278204402447460 \\ 0.066684862193223 \\  -0.421303149345866 \end{pmatrix}, \quad
T= 3.5848,\\
P_0 = \begin{pmatrix} 0.364232983907004  \\ 0.282004601213298  \\ 0.365277731376544  \\ 0.566929250936993 \\ 0.188719573086921 \\ -0.192150484392393 \end{pmatrix}, \quad
T= 4.1378. \\
\end{align*}

The three connecting orbit accumulating to the same periodic orbit and member of the families displayed in Figure \ref{fig:homoclinicL0continued} can be found using the following initial condition and integration time.
\begin{align*}
P_0= \begin{pmatrix}  -0.101146445518484 \\  0.039260485918423 \\  0.357723970145646  \\ 0.064390124937530 \\ -0.215188925518734 \\ -0.117478748772784 \end{pmatrix}, \quad
T= 2.3112, \\
P_0= \begin{pmatrix} 0.292042336892103  \\ 0.003508935985276 \\  0.118262267817677 \\ -0.031322268117327 \\ 0.009128811923180 \\ -0.481727032516309 \end{pmatrix}, \quad
T= 1.7643, \\
P_0= \begin{pmatrix} -0.082031603660355 \\ -0.244810917818636 \\ -0.371129071110934 \\ -0.141117360021255 \\ 0.090892927654736 \\ -0.410662336419204 \end{pmatrix}, \quad
T= 2.6543.
\end{align*}

In the case of $L_5$, the connection computed are at $J= 2.9$, the initial data for the periodic orbit are given with higher accuracy by
\begin{align*}
P_0= \begin{pmatrix} 0.919523300342616 \\ -0.018021865086785 \\ -0.005720721776858  \\ 0.001586045655911 \\ 0.140748196680255 \\ 0.142288965593486 \end{pmatrix}, \quad
T= 5.485186773053060.
\end{align*}
The midpoint of each connecting orbit as well as the approximate integrating time needed to reach the boundary of the parameterized manifolds are given by pairs, corresponding to their shape and the figure in which they were presented. The initial data for the connecting orbit displayed in Figure \ref{fig:homoclinicL5B} are given by
\begin{align*}
P_0= \begin{pmatrix} -0.221338679671589 \\ 0.290535064047762 \\ 0.807520893403199 \\ -0.079212158161279 \\ 0.099168243248453  \\ -0.192275633578254 \end{pmatrix}, \quad
T= 3.9267,\\
P_0= \begin{pmatrix} -0.027278885368683 \\ -0.415243675715196 \\ -0.681730123750280 \\ -0.689462085636593 \\ 0.002992463395721 \\ 0.060975647933203 \end{pmatrix}, \quad
T= 4.1225.
\end{align*}

The initial data for the connecting orbit displayed in Figure \ref{fig:homoclinicL5A} are given by
\begin{align*}
P_0= \begin{pmatrix}  -0.093216716467939  \\ 0.539629163160594  \\ 0.029112774416657  \\ 0.487738555586829 \\ 0.028678261003714 \\ -0.264834109382665 \end{pmatrix}, \quad
T= 5.7103, \\
P_0= \begin{pmatrix} -0.369576889105909 \\ -0.085470029306448 \\  0.463741869935280 \\  0.545543943960734 \\ 0.088979849325104  \\ -0.103325348379878 \end{pmatrix}, \quad
T= 4.7319.
\end{align*}

The initial data for the connecting orbit displayed in Figure \ref{fig:homoclinicL5} are given by
\begin{align*}
P_0= \begin{pmatrix} 0.497723454800157 \\  0.803131159532739  \\ 1.346319122336476 \\ -0.067118235690442 \\ 0.029291317637547 \\ -0.145379858982222 \end{pmatrix}, \quad
T= 4.9363, \\
P_0= \begin{pmatrix} 0.480703865397053 \\  -0.766478203112893 \\ -1.325717528617470 \\ -0.049652453045623 \\ 0.255108298576897 \\ -0.003998765894041 \end{pmatrix}, \quad
T= 4.9277. \\
\end{align*}

\bibliographystyle{plain}
\bibliography{papers} 

\begin{thebibliography}{100}

\bibitem{MR813508}
Ralph~H. Abraham.
\newblock Chaostrophes, intermittency, and noise.
\newblock In {\em Chaos, fractals, and dynamics ({G}uelph, {O}nt., 1981/1983)},
  volume~98 of {\em Lecture Notes in Pure and Appl. Math.}, pages 3--22.
  Dekker, New York, 1985.

\bibitem{MR3304062}
M.~Alvarez-Ram\'\i~rez and E.~Barrab\'es.
\newblock Transport orbits in an equilateral restricted four-body problem.
\newblock {\em Celestial Mech. Dynam. Astronom.}, 121(2):191--210, 2015.

\bibitem{MR3239345}
Martha Alvarez-Ram\'\i~rez, Joaqu\'\i~n Delgado, and Claudio Vidal.
\newblock Global regularization of a restricted four-body problem.
\newblock {\em Internat. J. Bifur. Chaos Appl. Sci. Engrg.}, 24(7):1450092, 15,
  2014.

\bibitem{chaosCRFBP}
Martha Alvarez-Ram\'\i~rez, A.~Garc\'{a}, J.F. Palaci\'{a}n, and P.~Yanguas.
\newblock Oscillatory motions in restricted n-body problems.
\newblock {\em Journal of Differential Equations}, 265:779--803, 2018.

\bibitem{MR2596303}
Martha \'Alvarez-Ram\'\i~rez and Claudio Vidal.
\newblock Dynamical aspects of an equilateral restricted four-body problem.
\newblock {\em Math. Probl. Eng.}, pages Art. ID 181360, 23, 2009.

\bibitem{MR2845212}
A.~N. Baltagiannis and K.~E. Papadakis.
\newblock Equilibrium points and their stability in the restricted four-body
  problem.
\newblock {\em Internat. J. Bifur. Chaos Appl. Sci. Engrg.}, 21(8):2179--2193,
  2011.

\bibitem{MR2784870}
Jean~F. Barros and Eduardo S.~G. Leandro.
\newblock The set of degenerate central configurations in the planar restricted
  four-body problem.
\newblock {\em SIAM J. Math. Anal.}, 43(2):634--661, 2011.

\bibitem{MR3176322}
Jean~F. Barros and Eduardo S.~G. Leandro.
\newblock Bifurcations and enumeration of classes of relative equilibria in the
  planar restricted four-body problem.
\newblock {\em SIAM J. Math. Anal.}, 46(2):1185--1203, 2014.

\bibitem{maximeJPMe}
Maxime Breden, J.P. Lessard, and J.~D. Mireles~James.
\newblock Computation of maximal local (un)stable manifold patches by the
  parameterization method.
\newblock {\em Indagationes Mathematicae}, 27(1):340--367, 2016.

\bibitem{MR3554377}
Jaime Burgos-Garc\'\i~a.
\newblock Families of periodic orbits in the planar {H}ill's four-body problem.
\newblock {\em Astrophys. Space Sci.}, 361(11):Paper No. 353, 21, 2016.

\bibitem{MR3105958}
Jaime Burgos-Garc\'\i~a and Joaqu\'\i~n Delgado.
\newblock On the ``blue sky catastrophe'' termination in the restricted
  four-body problem.
\newblock {\em Celestial Mech. Dynam. Astronom.}, 117(2):113--136, 2013.

\bibitem{burgosTwoEqualMasses}
Jaime Burgos-Garc\'\i~a and Joaqu\'\i~n Delgado.
\newblock Periodic orbits in the restricted four--body problem with two equal
  masses.
\newblock {\em Astrophysics and Space Sciencs}, 345(2):247--263, 2013.

\bibitem{MR3346723}
Jaime Burgos-Garc\'\i~a and Marian Gidea.
\newblock Hill's approximation in a restricted four-body problem.
\newblock {\em Celestial Mech. Dynam. Astronom.}, 122(2):117--141, 2015.

\bibitem{jpJaimeAndMe}
Jaime Burgos-Garc\'\i~a, J.P. Lessard, and J.~D. Mireles~James.
\newblock Spatial periodic orbits in the equaliteral circular restricted four
  body problem: computer assisted proofs of existence.
\newblock {\em (Submitted)}, 2018.

\bibitem{MR3715396}
Jaime Burgos-Garcia and Abimael Bengochea.
\newblock Horseshoe orbits in the restricted four-body problem.
\newblock {\em Astrophys. Space Sci.}, 362(11):Paper No. 212, 14, 2017.

\bibitem{MR3896998}
Jaime Burgos-Garc\'{\i}a, Jean-Philippe Lessard, and J.~D. Mireles~James.
\newblock Spatial periodic orbits in the equilateral circular restricted
  four-body problem: computer-assisted proofs of existence.
\newblock {\em Celestial Mech. Dynam. Astronom.}, 131(1):Art. 2, 36, 2019.

\bibitem{MR1976079}
X.~Cabr{\'e}, E.~Fontich, and R.~de~la Llave.
\newblock The parameterization method for invariant manifolds. {I}. {M}anifolds
  associated to non-resonant subspaces.
\newblock {\em Indiana Univ. Math. J.}, 52(2):283--328, 2003.

\bibitem{MR1976080}
X.~Cabr{\'e}, E.~Fontich, and R.~de~la Llave.
\newblock The parameterization method for invariant manifolds. {II}.
  {R}egularity with respect to parameters.
\newblock {\em Indiana Univ. Math. J.}, 52(2):329--360, 2003.

\bibitem{MR2177465}
X.~Cabr{\'e}, E.~Fontich, and R.~de~la Llave.
\newblock The parameterization method for invariant manifolds. {III}.
  {O}verview and applications.
\newblock {\em J. Differential Equations}, 218(2):444--515, 2005.

\bibitem{MR2507323}
R.~Calleja and R.~de~la Llave.
\newblock Fast numerical computation of quasi-periodic equilibrium states in
  1{D} statistical mechanics, including twist maps.
\newblock {\em Nonlinearity}, 22(6):1311--1336, 2009.

\bibitem{MR3062760}
Renato~C. Calleja, Alessandra Celletti, and Rafael de~la Llave.
\newblock A {KAM} theory for conformally symplectic systems: efficient
  algorithms and their validation.
\newblock {\em J. Differential Equations}, 255(5):978--1049, 2013.

\bibitem{MR3095277}
Renato~C. Calleja, Alessandra Celletti, and Rafael de~la Llave.
\newblock Local behavior near quasi-periodic solutions of conformally
  symplectic systems.
\newblock {\em J. Dynam. Differential Equations}, 25(3):821--841, 2013.

\bibitem{MR635782}
Jack Carr.
\newblock {\em Applications of centre manifold theory}, volume~35 of {\em
  Applied Mathematical Sciences}.
\newblock Springer-Verlag, New York-Berlin, 1981.

\bibitem{doi:10.1137/140960207}
Roberto Castelli, Jean-Philippe Lessard, and J.~D.~Mireles James.
\newblock Parameterization of invariant manifolds for periodic orbits i:
  Efficient numerics via the floquet normal form.
\newblock {\em SIAM Journal on Applied Dynamical Systems}, 14(1):132--167,
  2015.

\bibitem{poManProofs}
Roberto Castelli, J.P. Lessard, and J.~D. Mireles~James.
\newblock Parameterization of invariant manifolds for periodic orbits (ii):
  a-posteriori analysis and computer assisted error bounds.
\newblock {\em (Submitted)}, 2016.

\bibitem{MR1509241}
Jean Chazy.
\newblock Sur l'allure du mouvement dans le probl\`eme des trois corps quand le
  temps cro\^{\i}t ind\'{e}finiment.
\newblock {\em Ann. Sci. \'{E}cole Norm. Sup. (3)}, 39:29--130, 1922.

\bibitem{MR3329413}
Alain Chenciner.
\newblock Poincar\'{e} and the three-body problem.
\newblock In {\em Henri {P}oincar\'{e}, 1912--2012}, volume~67 of {\em Prog.
  Math. Phys.}, pages 51--149. Birkh\"{a}user/Springer, Basel, 2015.

\bibitem{MR3626383}
Xuhua Cheng and Zhikun She.
\newblock Study on chaotic behavior of the restricted four-body problem with an
  equilateral triangle configuration.
\newblock {\em Internat. J. Bifur. Chaos Appl. Sci. Engrg.}, 27(2):1750026, 12,
  2017.

\bibitem{MR742168}
G.~Contopoulos and A.~Pinotsis.
\newblock Infinite bifurcations in the restricted three-body problem.
\newblock {\em Astronom. and Astrophys.}, 133(1):49--51, 1984.

\bibitem{MR1554890}
G.~H. Darwin.
\newblock Periodic {O}rbits.
\newblock {\em Acta Math.}, 21(1):99--242, 1897.

\bibitem{MR2122688}
R.~de~la Llave, A.~Gonz{\'a}lez, {\`A}.~Jorba, and J.~Villanueva.
\newblock K{AM} theory without action-angle variables.
\newblock {\em Nonlinearity}, 18(2):855--895, 2005.

\bibitem{MR3032844}
Rafael de~la Llave and H{\'e}ctor~E. Lomel{\'\i}.
\newblock Invariant manifolds for analytic difference equations.
\newblock {\em SIAM J. Appl. Dyn. Syst.}, 11(4):1614--1651, 2012.

\bibitem{MR2966749}
Rafael de~la Llave and Jason~D. Mireles~James.
\newblock Parameterization of invariant manifolds by reducibility for volume
  preserving and symplectic maps.
\newblock {\em Discrete Contin. Dyn. Syst.}, 32(12):4321--4360, 2012.

\bibitem{MR0442990}
Robert~L. Devaney.
\newblock Homoclinic orbits in {H}amiltonian systems.
\newblock {\em J. Differential Equations}, 21(2):431--438, 1976.

\bibitem{MR0431274}
Robert~L. Devaney.
\newblock Blue sky catastrophes in reversible and {H}amiltonian systems.
\newblock {\em Indiana Univ. Math. J.}, 26(2):247--263, 1977.

\bibitem{MR2511084}
E.~J. Doedel, B.~W. Kooi, G.~A.~K. Van~Voorn, and Yu.~A. Kuznetsov.
\newblock Continuation of connecting orbits in 3{D}-{ODE}s. {II}.
  {C}ycle-to-cycle connections.
\newblock {\em Internat. J. Bifur. Chaos Appl. Sci. Engrg.}, 19(1):159--169,
  2009.

\bibitem{jpRafPDE}
Jordi-Llu{\'{\i}}s Figueras, Marcio Gameiro, Jean-Philippe Lessard, and Rafael
  de~la Llave.
\newblock A framework for the numerical computation and a posteriori
  varrification of invariant objects of evolution equations.
\newblock {\em (Submitted)
  {\footnotesize\verb|http://archimede.mat.ulaval.ca/jplessard/Publications_files/ks_rigorous13.pdf|}},
  2016.

\bibitem{MR2505176}
Ernest Fontich, Rafael de~la Llave, and Yannick Sire.
\newblock A method for the study of whiskered quasi-periodic and
  almost-periodic solutions in finite and infinite dimensional {H}amiltonian
  systems.
\newblock {\em Electron. Res. Announc. Math. Sci.}, 16:9--22, 2009.

\bibitem{MR3392421}
Marcio Gameiro, Jean-Philippe Lessard, and Yann Ricaud.
\newblock Rigorous numerics for piecewise-smooth systems: a functional analytic
  approach based on {C}hebyshev series.
\newblock {\em J. Comput. Appl. Math.}, 292:654--673, 2016.

\bibitem{MR2013214}
Marian Gidea and Melissa Burgos.
\newblock Chaotic transfers in three- and four-body systems.
\newblock {\em Phys. A}, 328(3-4):360--366, 2003.

\bibitem{MR993815}
G.~G\'{o}mez, J.~Llibre, and J.~Masdemont.
\newblock Homoclinic and heteroclinic solutions in the restricted three-body
  problem.
\newblock {\em Celestial Mech.}, 44(3):239--259, 1988/89.

\bibitem{jayChrisParmDDE}
Chris~M. Groothedde and J.~D. Mireles~James.
\newblock Parameterization method for unstable manifolds of delay differential
  equations.
\newblock {\em Journal of Computational Dynamics}, pages 1--52, (First online
  September 2017). \url{doi:10.3934/jcd.2017002}.

\bibitem{MR2551254}
Antoni Guillamon and Gemma Huguet.
\newblock A computational and geometric approach to phase resetting curves and
  surfaces.
\newblock {\em SIAM J. Appl. Dyn. Syst.}, 8(3):1005--1042, 2009.

\bibitem{MR2240743}
{\`A}.~Haro and R.~de~la Llave.
\newblock A parameterization method for the computation of invariant tori and
  their whiskers in quasi-periodic maps: numerical algorithms.
\newblock {\em Discrete Contin. Dyn. Syst. Ser. B}, 6(6):1261--1300
  (electronic), 2006.

\bibitem{MR2289544}
A.~Haro and R.~de~la Llave.
\newblock A parameterization method for the computation of invariant tori and
  their whiskers in quasi-periodic maps: rigorous results.
\newblock {\em J. Differential Equations}, 228(2):530--579, 2006.

\bibitem{MR2299977}
A.~Haro and R.~de~la Llave.
\newblock A parameterization method for the computation of invariant tori and
  their whiskers in quasi-periodic maps: explorations and mechanisms for the
  breakdown of hyperbolicity.
\newblock {\em SIAM J. Appl. Dyn. Syst.}, 6(1):142--207 (electronic), 2007.

\bibitem{mamotreto}
\`Alex Haro, Marta Canadell, Jordi-Llu\'\i~s Figueras, Alejandro Luque, and
  Josep-Maria Mondelo.
\newblock {\em The parameterization method for invariant manifolds}, volume 195
  of {\em Applied Mathematical Sciences}.
\newblock Springer, [Cham], 2016.
\newblock From rigorous results to effective computations.

\bibitem{rafaelStDelay_I}
Xiaolong He and Rafael de~la Llave.
\newblock Construction of quasi-periodic solutions of state-dependent delay
  differential equations by the parameterization method i: finitely
  differentiable, hyperbolic case.
\newblock {\em (submitted) \verb|mp_arc/c/15/15-105|}, 2015.

\bibitem{rafaelStDelay_II}
Xiaolong He and Rafael de~la Llave.
\newblock Construction of quasi-periodic solutions of state-dependent delay
  differential equations by the parameterization method ii: Analytic case.
\newblock {\em (Submitted) \verb|mp_arc/c/15/15-106|}, 2015.

\bibitem{MR0365628}
Jacques Henrard.
\newblock Proof of a conjecture of {E}. {S}tr\"omgren.
\newblock {\em Celestial Mech.}, 7:449--457, 1973.

\bibitem{MR1956529}
Jacques Henrard.
\newblock The web of periodic orbits at {$L_4$}.
\newblock {\em Celestial Mech. Dynam. Astronom.}, 83(1-4):291--302, 2002.
\newblock Modern celestial mechanics: from theory to applications (Rome, 2001).

\bibitem{MR2104214}
Jacques Henrard and Juan~F. Navarro.
\newblock Families of periodic orbits emanating from homoclinic orbits in the
  restricted problem of three bodies.
\newblock {\em Celestial Mech. Dynam. Astronom.}, 89(3):285--304, 2004.

\bibitem{MR3118249}
Gemma Huguet and Rafael de~la Llave.
\newblock Computation of limit cycles and their isochrons: fast algorithms and
  their convergence.
\newblock {\em SIAM J. Appl. Dyn. Syst.}, 12(4):1763--1802, 2013.

\bibitem{MR2851901}
Gemma Huguet, Rafael de~la Llave, and Yannick Sire.
\newblock Computation of whiskered invariant tori and their associated
  manifolds: new fast algorithms.
\newblock {\em Discrete Contin. Dyn. Syst.}, 32(4):1309--1353, 2012.

\bibitem{manifoldPaper1}
W.~Kalies, S.~Kepley, and J.~Mireles~James.
\newblock Analytic continuation of local (un)stable manifolds with rigorous
  computer assisted error bounds.
\newblock {\em SIAM Journal on Applied Dynamical Systems}, 17(1):157--202,
  2018.

\bibitem{MR0221044}
Al~Kelley.
\newblock The stable, center-stable, center, center-unstable, unstable
  manifolds.
\newblock {\em J. Differential Equations}, 3:546--570, 1967.

\bibitem{shaneAndJay}
Shane Kepley and J.~D. Mireles~James.
\newblock Chaotic motions in the restricted four body problem via devaney's
  saddle-focus homoclinic tangle theorem.
\newblock {\em (accepted) to appear in Journal of Differential Equations},
  2017.

\bibitem{MR3919451}
Shane Kepley and J.~D. Mireles~James.
\newblock Homoclinic dynamics in a restricted four-body problem: transverse
  connections for the saddle-focus equilibrium solution set.
\newblock {\em Celestial Mech. Dynam. Astronom.}, 131(3):Art. 13, 55, 2019.

\bibitem{MR2232439}
Eduardo S.~G. Leandro.
\newblock On the central configurations of the planar restricted four-body
  problem.
\newblock {\em J. Differential Equations}, 226(1):323--351, 2006.

\bibitem{MR998368}
L.~M. Lerman.
\newblock Behavior of a {H}amiltonian system in a neighborhood of a transversal
  homoclinic saddle-focus trajectory.
\newblock {\em Uspekhi Mat. Nauk}, 44(2(266)):233--234, 1989.

\bibitem{MR1135905}
L.~M. Lerman.
\newblock Complex dynamics and bifurcations in a {H}amiltonian system having a
  transversal homoclinic orbit to a saddle focus.
\newblock {\em Chaos}, 1(2):174--180, 1991.

\bibitem{MR3207723}
Jean-Philippe Lessard, Jason~D. Mireles~James, and Christian Reinhardt.
\newblock Computer assisted proof of transverse saddle-to-saddle connecting
  orbits for first order vector fields.
\newblock {\em J. Dynam. Differential Equations}, 26(2):267--313, 2014.

\bibitem{LessardReinhardt}
Jean-Philippe Lessard and Christian Reinhardt.
\newblock Rigorous numerics for nonlinear differential equations using
  {C}hebyshev series.
\newblock {\em SIAM J. Numer. Anal.}, 52(1):1--22, 2014.

\bibitem{MR2528494}
Xuemei Li and Rafael de~la Llave.
\newblock Construction of quasi-periodic solutions of delay differential
  equations via {KAM} techniques.
\newblock {\em J. Differential Equations}, 247(3):822--865, 2009.

\bibitem{MR0021186}
A.~Liapounoff.
\newblock {\em Probl\`eme {G}\'{e}n\'{e}ral de la {S}tabilit\'{e} du
  {M}ouvement}.
\newblock Annals of Mathematics Studies, no. 17. Princeton University Press,
  Princeton, N. J.; Oxford University Press, London, 1947.

\bibitem{kotParm}
J.~D. Mireles~James.
\newblock Fourier-{T}aylor approximation of unstable manfiolds for compact
  maps: Numerical implementation and computer assisted error bounds.
\newblock {\em (Submitted) {\footnotesize
  \verb|http://cosweb1.fau.edu/~jmirelesjames/unstableManifoldCompactMapPage.html|}},
  2015.

\bibitem{MR3068557}
J.~D. Mireles~James and Konstantin Mischaikow.
\newblock Rigorous a-posteriori computation of (un)stable manifolds and
  connecting orbits for analytic maps.
\newblock {\em SIAM J. Appl. Dyn. Syst.}, 12(2):957--1006, 2013.

\bibitem{parmPDE}
J.~D. Mireles~James and Christian Reinhardt.
\newblock Fourier-{T}aylor parameterization of unstable manifolds for parabolic
  partial differential equations: Formalism, implementation, and rigorous
  validation.
\newblock {\em (Submitted) {\footnotesize
  \verb|http://cosweb1.fau.edu/~jmirelesjames/unstableManParmPDEPage.html|}},
  2016.

\bibitem{fastSlow}
J.~D. Mireles~James and Jan~Bouwe van~den Berg.
\newblock Parameterization of slow-stable manifolds and their invariant vector
  bundles: Theory and numerical implementation.
\newblock {\em Journal of Discrete and Continuous Dynamical Systems, Series A},
  36(9):4637--4664, September 2016.

\bibitem{MR96021}
J\"{u}rgen Moser.
\newblock On the generalization of a theorem of {A}. {L}iapounoff.
\newblock {\em Comm. Pure Appl. Math.}, 11:257--271, 1958.

\bibitem{MR0096021}
J\"{u}rgen Moser.
\newblock On the generalization of a theorem of {A}. {L}iapounoff.
\newblock {\em Comm. Pure Appl. Math.}, 11:257--271, 1958.

\bibitem{MR1829194}
J\"{u}rgen Moser.
\newblock {\em Stable and random motions in dynamical systems}.
\newblock Princeton Landmarks in Mathematics. Princeton University Press,
  Princeton, NJ, 2001.
\newblock With special emphasis on celestial mechanics, Reprint of the 1973
  original, With a foreword by Philip J. Holmes.

\bibitem{MR2189486}
J\"{u}rgen Moser and Eduard~J. Zehnder.
\newblock {\em Notes on dynamical systems}, volume~12 of {\em Courant Lecture
  Notes in Mathematics}.
\newblock New York University, Courant Institute of Mathematical Sciences, New
  York; American Mathematical Society, Providence, RI, 2005.

\bibitem{moultonBook}
F.R. Moulton, D.~Buchanan, T~Buck, F.L. Griffin, W.R. Longley, and W.D.
  MacMillan.
\newblock {\em Periodic Orbits}.
\newblock Number Publication No. 161. Carnegie Institution of Washington, 1920.

\bibitem{chebManifolds}
Maxime Murray and J.~D. Mireles~James.
\newblock Chebyshev-taylor parameterization of stable/unstable manifolds for
  periodic orbits: implementation and applications.
\newblock {\em Internat. J. Bifur. Chaos Appl.}, 27(14), 2017.

\bibitem{MR0231218}
James~M. Ortega.
\newblock The {N}ewton-{K}antorovich theorem.
\newblock {\em Amer. Math. Monthly}, 75:658--660, 1968.

\bibitem{MR3571218}
K.~E. Papadakis.
\newblock Families of asymmetric periodic solutions in the restricted four-body
  problem.
\newblock {\em Astrophys. Space Sci.}, 361(12):Paper No. 377, 15, 2016.

\bibitem{MR3500916}
K.~E. Papadakis.
\newblock Families of three-dimensional periodic solutions in the circular
  restricted four-body problem.
\newblock {\em Astrophys. Space Sci.}, 361(4):Paper No. 129, 14, 2016.

\bibitem{pedersen1}
P.~Pedersen.
\newblock Librationspunkte im restringierten vierk\"{o}rperproblem.
\newblock {\em Dan. Mat. Fys. Medd.}, 21(6), 1944.

\bibitem{pedersen2}
P.~Pedersen.
\newblock Stabilit\"{a}tsuntersuchungen im restringierten
  vierk\"{o}rperproblem.
\newblock {\em Dan. Mat. Fys. Medd.}, 26(16), 1952.

\bibitem{MR875720}
A.~D. Pinotsis.
\newblock Bifurcations, stability and universality of families of periodic
  orbits in the restricted three-body problem.
\newblock {\em Astronom. and Astrophys.}, 159(1-2):231--238, 1986.

\bibitem{MR2726383}
Antonis~D. Pinotsis.
\newblock Infinite {F}eigenbaum sequences and spirals in the vicinity of the
  {L}agrangian periodic solutions.
\newblock {\em Celestial Mech. Dynam. Astronom.}, 108(2):187--202, 2010.
\newblock With supplementary material available online.

\bibitem{MR1194622}
Henri Poincar\'{e}.
\newblock {\em New methods of celestial mechanics. {V}ol. 1}, volume~13 of {\em
  History of Modern Physics and Astronomy}.
\newblock American Institute of Physics, New York, 1993.
\newblock Periodic and asymptotic solutions, Translated from the French,
  Revised reprint of the 1967 English translation, With endnotes by V. I.
  Arnol\'{e} d, Edited and with an introduction by Daniel L. Goroff.

\bibitem{MR1194623}
Henri Poincar\'{e}.
\newblock {\em New methods of celestial mechanics. {V}ol. 2}, volume~13 of {\em
  History of Modern Physics and Astronomy}.
\newblock American Institute of Physics, New York, 1993.
\newblock Approximations by series, Translated from the French, Revised reprint
  of the 1967 English translation, With endnotes by V. M. Alekseev, Edited and
  with an introduction by Daniel L. Goroff.

\bibitem{MR1194624}
Henri Poincar\'{e}.
\newblock {\em New methods of celestial mechanics. {V}ol. 3}, volume~13 of {\em
  History of Modern Physics and Astronomy}.
\newblock American Institute of Physics, New York, 1993.
\newblock Integral invariants and asymptotic properties of certain solutions,
  Translated from the French, Revised reprint of the 1967 English translation,
  With endnotes by G. A. Merman, Edited and with an introduction by Daniel L.
  Goroff.

\bibitem{MR3158025}
Zhikun She and Xuhua Cheng.
\newblock The existence of a {S}male horseshoe in a planar circular restricted
  four-body problem.
\newblock {\em Celestial Mech. Dynam. Astronom.}, 118(2):115--127, 2014.

\bibitem{MR3038224}
Zhikun She, Xuhua Cheng, and Cuiping Li.
\newblock The existence of transversal homoclinic orbits in a planar circular
  restricted four-body problem.
\newblock {\em Celestial Mech. Dynam. Astronom.}, 115(3):299--309, 2013.

\bibitem{MR3253906}
Leonid~Pavlovich Shilnikov, Andrey~L. Shilnikov, and Dmitry~V. Turaev.
\newblock Showcase of blue sky catastrophes.
\newblock {\em Internat. J. Bifur. Chaos Appl. Sci. Engrg.}, 24(8):1440003, 10,
  2014.

\bibitem{0025-5734-10-1-A07}
L.P. Shil\'{n}ikov.
\newblock A contribution to the problem of the structure of an extended
  neighborhood of a rough equilibrium state of saddle-focus type.
\newblock {\em Mathematics of the USSR-Sbornik}, 10(1):91, 1970.

\bibitem{MR1345153}
C.~L. Siegel and J.~K. Moser.
\newblock {\em Lectures on celestial mechanics}.
\newblock Classics in Mathematics. Springer-Verlag, Berlin, 1995.
\newblock Translated from the German by C. I. Kalme, Reprint of the 1971
  translation.

\bibitem{MR0210987}
L.~P. Silnikov.
\newblock Existence of a countable set of periodic motions in a
  four-dimensional space in an extended neighborhood of a saddle-focus.
\newblock {\em Dokl. Akad. Nauk SSSR}, 172:54--57, 1967.

\bibitem{MR0259275}
L.~P. Silnikov.
\newblock On the question of the structure of an extended neighborhood of a
  structurally stable state of equilibrium of saddle-focus type.
\newblock {\em Mat. Sb. (N.S.)}, 81 (123):92--103, 1970.

\bibitem{MR510556}
Carlos Sim\'o.
\newblock Relative equilibrium solutions in the four-body problem.
\newblock {\em Celestial Mech.}, 18(2):165--184, 1978.

\bibitem{MR0228014}
S.~Smale.
\newblock Differentiable dynamical systems.
\newblock {\em Bull. Amer. Math. Soc.}, 73:747--817, 1967.

\bibitem{stromgrenRef}
E.~Str\"{o}mgren.
\newblock Connaissance actuelle des orbites dans le probleme des trois corps.
\newblock {\em Bull. Astron.}, 9:87--130, 1934.

\bibitem{szebehelyOnStromgren}
V.~Szebehely and P.~Nacozy.
\newblock A class of e. str\"{o}mgren's direct orbits in the restricted
  problem.
\newblock {\em The Astronomical Journal}, 77(2):184--190, 1967.

\bibitem{szebehelyTriangularPoints}
V.~Szebehely and Flandern T.V.
\newblock A family of retegrade orbits around the triangular equilibrium
  points.
\newblock {\em The Astronomical Journal}, 72(3):373--379, 1967.

\bibitem{parmChristian}
J.~B. Van~den Berg, J.~D. Mireles~James, and Christian Reinhardt.
\newblock Computing (un)stable manifolds with validated error bounds:
  non-resonant and resonant spectra.
\newblock {\em Journal of Nonlinear Science}, 26:1055--1095, 2016.

\bibitem{MR3353132}
Jan~Bouwe van~den Berg, Andr\'ea Desch\^enes, Jean-Philippe Lessard, and
  Jason~D. Mireles~James.
\newblock Stationary coexistence of hexagons and rolls via rigorous
  computations.
\newblock {\em SIAM J. Appl. Dyn. Syst.}, 14(2):942--979, 2015.

\bibitem{paperBridge}
J.B. van~den Berg, M.~Breden, J.-P. Lessard, and M.~Murray.
\newblock Continuation of homoclinic orbits in the suspension bridge equation:
  a computer-assisted proof.
\newblock {\em Journal of Differential Equations}, 264, 2018.

\bibitem{RayJB}
J.B. van~den Berg and R.S.S. Sheombarsing.
\newblock Rigorous numerics for {ODEs} using {C}hebyshev series and domain
  decomposition, 2016.
\newblock Preprint.

\end{thebibliography}

\end{document}